\newcommand{\C}{\mathbb{C}}
\newcommand{\Q}{\mathbb{Q}}
\newcommand{\R}{\mathbb{R}}
\renewcommand{\H}{\mathbb{H}}
\newcommand{\Z}{\mathbb{Z}}
\newcommand{\N}{\mathbb{N}}
\newcommand{\bbC}{\mathbb{C}}
\newcommand{\bbH}{\mathbb{H}}
\newcommand{\bbQ}{\mathbb{Q}}
\newcommand{\bbR}{\mathbb{R}}
\newcommand{\bbZ}{\mathbb{Z}}
\newcommand{\calE}{\mathcal{E}}
\newcommand{\calF}{\mathcal{F}}
\newcommand{\calG}{\mathcal{G}}
\newcommand{\calH}{\mathcal{H}}
\newcommand{\calK}{\mathcal{K}}
\newcommand{\cusps}[1]{\Omega{\parent{#1}}}
\newcommand{\cusp}[1]{#1}
\newcommand{\cuspwidth}[2]{w_{#2}}
\newcommand{\Znu}[1]{{\Z}^{\parent{#1}}}
\newcommand{\Zrho}[2]{{\Z}_{#2}^{\parent{#1}}}
\newcommand{\I}[2]{\widetilde I_{#2}^{\parent{#1}}} 
\newcommand{\tI}[2]{I_{#2}^{\parent{#1}}} 
\newcommand{\J}[2]{\widetilde J_{#2}^{\parent{#1}}} 
\newcommand{\tJ}[2]{J_{#2}^{\parent{#1}}} 
\newcommand{\spceis}[1]{\calE_{#1}} 
\newcommand{\tr}[2]{\operatorname{Tr}_{#1}^{#2}}
\newcommand{\spcfinf}[1]{M^{\parent{\infty}}_{#1}} 
\newcommand{\spcginf}[1]{\widehat{M}^{\parent{\infty}}_{#1}} 
\newcommand{\spcG}[1]{{S}_{#1}} 
\newcommand{\finfgen}[2]{\calH^{\parent{#1}}_{#2}}
\newcommand{\finf}[3]{f^{\parent{#1}}_{#2, #3}} 
\newcommand{\ainf}[4]{a^{\parent{#1}}_{#2}(#3,#4)} 
\newcommand{\ginf}[3]{g^{\parent{#1}}_{#2, #3}} 
\newcommand{\binf}[4]{b^{\parent{#1}}_{#2}(#3,#4)} 
\newcommand{\qinf}[3]{\mathbf q^{\parent{#1}}_{#2, #3}}
\newcommand{\qinfpart}[4]{ \mathrm q^{\parent{#1}, #4}_{#2, #3}}
\newcommand{\finferror}[3]{\mathbf \calF^{\parent{#1}}_{#2, #3}}
\newcommand{\finferrorpart}[4]{\mathbf \calF^{\parent{#1}, #4}_{#2, #3}}
\newcommand{\principalfinf}[3]{\mathbf f^{\parent{#1}}_{#2, #3}} 
\newcommand{\principalfinfpart}[4]{\parent{\mathbf f^{\parent{#1}}_{#2, #3}}^{#4}} 
\newcommand{\principalginf}[3]{\mathbf g^{\parent{#1}}_{#2, #3}} 
\newcommand{\principalginfpart}[4]{\parent{\mathbf g^{\parent{#1}}_{#2, #3}}^{#4}} 
\newcommand{\ginferror}[3]{\mathbf \calG^{\parent{#1}}_{#2, #3}}
\newcommand{\ginferrorpart}[4]{\mathbf \calG^{\parent{#1}, #4}_{#2, #3}}
\newcommand{\Finf}[3]{F^{\parent{#1}}_{#2, #3}} 
\newcommand{\Ainf}[4]{A^{\parent{#1}}_{#2}(#3,#4)} 
\newcommand{\Ginf}[3]{G^{\parent{#1}}_{#2, #3}} 
\newcommand{\Binf}[4]{B^{\parent{#1}}_{#2}(#3,#4)} 
\newcommand{\Fvan}[4]{F^{\parent{#1}}_{#2, #4, #3}} 
\newcommand{\Avan}[5]{A^{\parent{#1}}_{#2, #5}(#3,#4)} 
\newcommand{\Ivan}[3]{\widetilde I_{#2}^{\parent{#1}}(#3)} 
\newcommand{\tIvan}[3]{I_{#2}^{\parent{#1}}(#3)} 
\newcommand{\fvan}[4]{f^{\parent{#1}}_{#2, #4, #3}} 
\newcommand{\avan}[5]{a^{\parent{#1}}_{#2, #5}(#3,#4)} 
\newcommand{\Ialpha}[3]{\widetilde I_{#2}^{\parent{#1,#3}}} 
\newcommand{\tIalpha}[4]{I_{#2}^{\parent{#1,#3,#4}}} 
\newcommand{\Jalpha}[3]{\widetilde J_{#2}^{\parent{#1,#3}}} 
\newcommand{\tJalpha}[4]{J_{#2}^{\parent{#1, #3, #4}}} 
\newcommand{\qalpha}[4]{\mathbf q^{\parent{#1, #4}}_{#2, #3}} 
\newcommand{\qalphapart}[5]{\mathrm q^{\parent{#1, #4},#5}_{#2, #3}}
\newcommand{\falpha}[5]{f^{\parent{#1, #4, #5}}_{#2, #3}} 
\newcommand{\aalpha}[6]{a^{\parent{#1, #5, #6}}_{#2}\parent{#3, #4}} 
\newcommand{\principalfalpha}[5]{\mathbf f^{\parent{#1, #4, #5}}_{#2, #3}} 
\newcommand{\principalfalphapart}[6]{\mathbf f^{\parent{#1, #4, #5},#6}_{#2, #3}} 
\newcommand{\falphaerror}[5]{\mathbf \calF^{\parent{#1, #4, #5}}_{#2, #3}}
\newcommand{\falphaerrorpart}[6]{ \calF^{\parent{#1, #4, #5},#6}_{#2, #3}}
\newcommand{\galpha}[5]{g^{\parent{#1, #4, #5}}_{#2, #3}} 
\newcommand{\balpha}[6]{b^{\parent{#1, #5, #6}}_{#2}\parent{#3, #4}} 
\newcommand{\principalgalpha}[5]{\mathbf g^{\parent{#1, #4, #5}}_{#2, #3}} 
\newcommand{\principalgalphapart}[6]{\mathrm g^{\parent{#1, #4, #5},#6}_{#2, #3}} 
\newcommand{\galphaerror}[5]{\mathbf \calG^{\parent{#1, #4, #5}}_{#2, #3}}
\newcommand{\galphaerrorpart}[6]{\mathrm \calG^{\parent{#1, #4, #5},#6}_{#2, #3}}
\newcommand{\Fgamma}[4]{F^{\parent{#1, #4}}_{#2, #3}} 
\newcommand{\Agamma}[5]{A^{\parent{#1, #5}}_{#2}\parent{#3, #4}} 
\newcommand{\Ggamma}[4]{G^{\parent{#1, #4}}_{#2, #3}} 
\newcommand{\Bgamma}[5]{B^{\parent{#1, #5}}_{#2}\parent{#3, #4}} 
\newcommand{\indexmatrix}{\lambda} 
\newcommand{\errormatrix}{\mu} 
\newcommand{\projection}{\phi}
\newtheorem{thm}{Theorem}[section]
\newtheorem{theorem}[thm]{Theorem}
\newtheorem{lemma}[thm]{Lemma}
\newtheorem{prop}[thm]{Proposition}
\newtheorem{proposition}[thm]{Proposition}
\newtheorem{question}[thm]{Question}
\theoremstyle{definition}
\newtheorem{definition}[thm]{Definition}
\newtheorem*{example}{Example}
\newtheorem*{remark}{Remark}
\newcommand{\parent}[1]{ \left( #1 \right) }
\newcommand{\inv}{^{-1}}
\newcommand{\SL}{{\text {\rm SL}}}
\newcommand{\GL}{{\text {\rm GL}}}
\newcommand{\set}[1]{\left\{ #1 \right\}}
\newcommand{\prm}{^\prime}
\renewcommand{\conj}[1]{{\overline{#1}}}
\newcommand{\im}{\operatorname{Im}}
\renewcommand{\i}{\operatorname{i}}
\newcommand{\e}{\operatorname{e}}
\newcommand{\smatrix}[4]{\left(\begin{smallmatrix}#1&#2\\#3&#4\end{smallmatrix}\right)}
\begin{document}
\title{The arithmetic of modular grids}
\author{Michael Griffin}
\author{Paul Jenkins}
\author{Grant Molnar}
\thanks{The first author acknowledges the support of the NSF (grant DMS-$1502390$). This work was partially supported by a grant from the Simons Foundation ($\# 281876$ to Paul Jenkins). The third author is grateful for the support of the Gridley Fund for Graduate Mathematics.}
\date{\today}

\begin{abstract}
A modular grid is a pair of sequences $\parent{f_m}_m$ and $\parent{g_n}_n$ of weakly holomorphic modular forms such that for almost all $m$ and $n$, the coefficient of $q^n$ in $f_m$ is the negative of the coefficient of $q^m$ in $g_n$.
Zagier proved this coefficient duality in weights $1/2$ and $3/2$ in the Kohnen plus space, and such grids have appeared for Poincar\'{e} series, for modular forms of integral weight, and in many other situations. We give a general proof of coefficient duality for canonical row-reduced bases of spaces of weakly holomorphic modular forms of integral or half-integral weight for every group $\Gamma \subseteq\SL_2(\bbR)$ commensurable with $\SL_2(\bbZ)$.
We construct bivariate generating functions that encode these modular forms, and study linear operations on the resulting modular grids.
\end{abstract}
\maketitle

\section{Introduction and statement of results}

In~\cite{ZagierTSM}, Zagier defined canonical bases for spaces of weight $1/2$ and weight $3/2$ weakly holomorphic modular forms for $\Gamma_0(4)$ in the Kohnen plus space. The weight $1/2$ basis elements \[f_m(z) = q^{-m}+\sum\limits_{n\geq 1}a_{1/2}(m,n)q^n\] and weight $3/2$ basis elements \[g_n(z) = q^{-n}+\sum\limits_{m\geq 0} a_{3/2}(n,m) q^m,\] where as usual $q^n = \e^{2 \pi \i n z}$, have Fourier coefficients satisfying the striking duality \[
a_{1/2}(m,n) = -a_{3/2}(n,m),
\]
\noindent
so that the $n$th Fourier coefficient of the $m$th basis element in one weight is the negative of the $m$th Fourier coefficient of the $n$th basis element in the other weight. This duality is apparent when we compute the Fourier expansions of the first few basis elements.
\begin{align*}
	f_0(z) &= &1 &{}&{}+{} 2q &{}+{}& 2q^4 &{}+{}& 0q^5 &{}+{}& 0q^8 &{}+{}& \cdots,\\
	f_3(z) &= &q^{-3} &{}&{}-{} 248q &{}+{}& 26752q^4 &{}-{}& 85995q^5 &{}+{}& 1707264q^8 &{}+{}& \cdots,\\
	f_4(z) &= &q^{-4} &{}&{}+{} 492q &{}+{}& 143376q^4&{}+{}& 565760q^5&{}+{}& 18473000q^8&{}+{}& \cdots,\\
	f_7(z) &= &q^{-7} &{}&{}-{} 4119q&{}+{}& 8288256q^4&{}-{}& 52756480q^5&{}+{}& 5734772736q^8&{}+{}& \cdots,\\
	f_8(z) &= &q^{-8} &{}&{}+{} 7256q&{}+{}&26124256q^4&{}+{}&190356480q^5&{}+{}&29071392966q^8&{}+{}& \cdots.
\\\\
	g_1(z)&=&q^{-1} &-2 &{}+{} 248q^3 &{}-{}& 492q^4 &{}+{}& 4119q^7 &{}-{}& 7256q^8 &{}+{}& \cdots,\\
	g_4(z)&=&q^{-4} &-2 &{}-{} 26752q^3 &{}-{}& 143376q^4 &{}-{}& 8288256q^7 &{}-{}& 26124256q^8 &{}+{}& \cdots,\\
	g_5(z)&=&q^{-5} &+0 &{}+{} 85995q^3 &{}-{}& 565760q^4 &{}+{}& 52756480q^7 &{}-{}& 190356480q^8 &{}+{}& \cdots,\\
	g_8(z)&=&q^{-8} &+0 &{}-{} 1707264q^3 &{}-{}& 18473000q^4 &{}-{}& 5734772736q^7 &{}-{}& 29071392966q^8 &{}+{}& \cdots.
\end{align*}
\noindent
Looking at these expressions, we see that the coefficients appearing in the rows of one basis appear, with opposite sign, in the columns of the other basis. Two such sequences of weakly holomorphic modular forms constitute a \emph{modular grid} (see Section \ref{SecDualityWHMF}).

Zagier gave two proofs of this coefficient duality. The first proof relies on showing that these basis elements have a common two-variable generating function
\[
F(z,\tau)=\frac{f_0(z)g_4(\tau)+f_3(z)g_1(\tau)}{j(4\tau)-j(4z)},
\]
where $j$ is Klein's $j$-function. This function encodes the modular grid, so that the Fourier coefficients of $F(z,\tau)$ in one variable yield the basis elements as functions in the other variable. Similar generating functions for bases of modular forms have been used by Eichler~\cite{Eichler1, Eichler2}, Borcherds~\cite{Borcherds1, Borcherds2}, and Faber~\cite{Faber1, Faber2} in studying the Hecke algebra and the zeros of modular forms. The second proof given by Zagier, attributed to Kaneko, is simpler and involves considering, on the one hand, the constant term of the product $f_m(z) g_n(z)$, which is $a_{1/2}(m,n) + a_{3/2}(n,m)$. On the other hand, $f_m(z) g_n(z)$ lives in a space of weight $2$ forms which are derivatives, and so this constant term is equal to $0$.

Modular grids were found for weakly holomorphic modular forms over $\SL_2(\Z)$ of small integer weights by Asai, Kaneko, and Ninomiya in 1997~\cite{Asai-Kaneko-Ninomiya}. Coefficient duality was later proven for a wide variety of spaces of modular forms of integral weight~\cite{Adams, Cho-Choi-Kim, Cho-Choie, Choi-Kim, Duke-Jenkins, El-Guindy, Garthwaite-Jenkins, Guerzhoy, Haddock-Jenkins, Iba-Jenkins-Warnick, Jenkins-Molnar, Jenkins-Thornton2, JKK, Rouse, VanderWilt} and half-integral weight~\cite{Duke-Jenkins2, Green-Jenkins, Kim}.
In many cases, Zagier's and Kaneko's proofs have been modified in ad hoc ways to obtain duality for particular spaces of modular objects. In general, generating functions become increasingly complicated and difficult to find in the presence of cusp forms, while the existence of weight 2 Eisenstein series in higher levels allows for nonzero constant terms in spaces of weight 2, which interfere with Kaneko's proof in these settings.

Harmonic Maass forms provide an alternative path to coefficient duality. The Maass--Poincar\'e series form bases for spaces of harmonic Maass forms with poles at specified cusps. Explicit formulas for their coefficients~\cite{Bringmann-Kane-Rhoades} demonstrate that duality holds in this setting as well, creating ``mock modular grids''. Duality also holds in spaces of vector valued forms, mock theta functions, and other modular objects~\cite{Ahlgren-Kim, Bringmann-Jenkins-Kane, Bringmann-Ono, Cho-Choi-Kim, Choi-Lim, Folsom-Ono, Zhang1}. However, these constructions of coefficients as infinite sums, which turn out to be closely related for dual weights, seem to give less insight into how the spaces of modular forms interact with each other than the proofs of Zagier and Kaneko.

Earlier results related to coefficient duality for the principal parts of weakly holomorphic modular forms of negative weight and coefficients of holomorphic forms of positive weight appear in work of Petersson~\cite{Petersson} and Siegel~\cite{Siegel}. Additionally, unpublished work of Knopp in the mid-1960s (see~\cite{Pribitkin2}) restated Lehmer's conjecture on the coefficients $\tau(n)$ of the $\Delta$-function as a statement about coefficients of forms of weight $-10$.

In this paper, we generalize both Zagier's and Kaneko's proofs for coefficient duality to a very general setting, without relying on explicit formulas for coefficients of the Maass-Poincar\'e series. In particular, suppose $k\in \frac12\Z$, and let $\Gamma\subseteq\SL_2(\R)$ be any subgroup which is commensurable with $\SL_2(\Z)$; note that $\Gamma$ will necessarily be discrete. Moreover, let $\nu$ be a consistent finite weight $k$ multiplier for $\Gamma$, as in Section \ref{SecModularity and cusps} below. We let $\spcfinf k \parent{\Gamma, \nu}$ denote the space of weight $k$ weakly holomorphic modular forms for $\Gamma$ and $\nu$ with poles allowed only at the cusp $\cusp \infty$, and let $\spcginf k (\Gamma,\nu)\subseteq \spcfinf k (\Gamma,\nu)$ be the subspace of weakly holomorphic modular forms which vanish at every cusp besides $\cusp \infty$.   As usual, we write $M_k^!(\Gamma,\nu)$, $M_k (\Gamma,\nu)$, and $S_k(\Gamma,\nu)$ for the related spaces of weakly holomorphic modular forms, holomorphic modular forms, and cusp forms respectively.

\begin{remark}The notation $S^\sharp_k$ is used to denote spaces analogous to $\spcginf k \parent{\Gamma, \nu}$ in~\cite{Jenkins-Molnar} and other papers.
We have altered the notation to avoid confusion with $S^!_k$, which represents the space of weakly holomorphic cusp forms which have vanishing constant terms at \emph{every} cusp.
\end{remark}

The multiplier $\nu$ governs which powers of $q$ can occur in the Fourier expansions of modular forms for $\Gamma$ and $\nu$, via the image of $\nu$ on the translations in $\Gamma$. Let $\Znu \nu $ be defined as in \eqref{Equation: DefZnu} below; this is an indexing set of rational numbers with fixed denominators and with numerators satisfying certain congruence conditions. We note that $\overline\nu$ is a multiplier for $\Gamma$ of weight $2-k$. Moreover, $\Znu {\conj \nu}$ will satisfy congruence conditions opposite to $\Znu \nu$, so that $\Znu {\conj \nu}=-\Znu \nu$. In Section \ref{SecDualityWHMF} we define row-reduced canonical bases for $M^{(\infty)}_{k}(\Gamma,\nu)$ and $\widehat M^{(\infty)}_{k}(\Gamma,\nu)$, indexed by the sets $\tI \nu k$ and $\tJ \nu k$, whose Fourier expansions are given respectively by
\begin{align*}
	\finf {\nu} {k} m(z) &= q^{-m} + \sum_{\substack {n \in \Znu \nu \\ -n \not\in \tI \nu k}} \ainf {\nu} {k} mn q^n \ \ \textrm{for} \ \ m \in \tI {\nu} {k}, \\
	\ginf \nu k n (z) &= q^{-n} + \sum_{\substack {m \in \Znu \nu \\ -m \not\in \tJ \nu k}} \binf \nu k nm q^m \ \ \textrm{for} \ \ n \in \tJ \nu k.
\end{align*}
\noindent
With this notation, our principal theorem, proven in Section~\ref{SecDualityWHMF}, is as follows.
\begin{theorem}\label{IntroThmDualitySimple}
Let $\Gamma$, $k,$ and $\nu$ be as above. For every $m \in \tI \nu k$ and every $n \in \tJ \nu k$, the coefficients of the forms $\finf \nu k m(z)$ and $\ginf {\conj \nu} {2-k} n(z)$ satisfy
\[
	\ainf { \nu} k mn = - \binf {\conj \nu} {2-k} nm.
\]
\end{theorem}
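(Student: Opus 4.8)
The plan is to generalize Kaneko's argument, replacing ``weight $2$ forms are derivatives'' by a residue computation that is robust to the presence of weight $2$ Eisenstein series. Fix $m \in \tI \nu k$ and $n \in \tJ {\conj\nu} {2-k}$ (the index set on which $\ginf {\conj\nu}{2-k} n$ is defined), and form the product
\[
	h(z) := \finf \nu k m(z) \cdot \ginf {\conj\nu} {2-k} n(z).
\]
Since $\finf \nu k m$ has weight $k$ and multiplier $\nu$ while $\ginf {\conj\nu}{2-k} n$ has weight $2-k$ and multiplier $\conj\nu$, the product $h$ has weight $2$ and trivial multiplier $\nu\conj\nu$; thus $h(z)\,dz$ is a $\Gamma$-invariant meromorphic differential on $\bbH$. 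Crucially, because $\finf \nu k m \in \spcfinf k(\Gamma,\nu)$ is holomorphic away from $\infty$ and $\ginf {\conj\nu}{2-k} n \in \spcginf{2-k}(\Gamma,\conj\nu)$ vanishes at every cusp other than $\infty$, the form $h$ is holomorphic on all of $\bbH$ and vanishes at every cusp except possibly $\infty$, where it may have a pole.

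First I would pass to the compact Riemann surface $X(\Gamma)$ obtained by compactifying $\Gamma\backslash\bbH$. The differential $\omega = h(z)\,dz$ descends to a meromorphic differential on $X(\Gamma)$ whose only possible pole lies below $\infty$: it is holomorphic at interior points, a short local computation shows it extends holomorphically across the images of elliptic fixed points (where $h$ is holomorphic), and its residue at each remaining cusp is proportional to the corresponding constant term of $h$, which vanishes. By the residue theorem the sum of the residues of $\omega$ is $0$, so the residue at $\infty$ is $0$; unwinding the local parameter there, this residue is a nonzero multiple of the constant term of $h$, so that constant term vanishes. This is exactly the step where the hypothesis $g \in \spcginf{2-k}$, which kills the contributions of all other cusps, replaces Kaneko's derivative argument and sidesteps the nonzero constant terms coming from weight $2$ Eisenstein series in higher level.

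Next I would extract the constant term of $h$ from the two Fourier expansions. Writing $\finf \nu k m = \sum_r \alpha_r q^r$ and $\ginf{\conj\nu}{2-k} n = \sum_s \beta_s q^s$, the constant term is $\sum_r \alpha_r \beta_{-r}$. The term $r = n$ contributes $\alpha_n$ and the term $r = -m$ contributes $\beta_m$, while each remaining term $\alpha_r\beta_{-r}$ is nonzero only if simultaneously $-r \notin \tI \nu k$ and $r \notin \tJ{\conj\nu}{2-k}$. Here I would invoke the key structural fact built into the row-reduced bases: the pivot sets are complementary, $\Znu\nu = (-\tI\nu k)\sqcup \tJ{\conj\nu}{2-k}$. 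This complementarity forces every remaining cross term to vanish, and it also gives $-n\notin\tI\nu k$ and $-m\notin\tJ{\conj\nu}{2-k}$ (in particular $m\neq -n$), so that the two surviving contributions are genuinely $\alpha_n = \ainf\nu k m n$ and $\beta_m = \binf{\conj\nu}{2-k} n m$. Hence the constant term of $h$ equals $\ainf\nu k m n + \binf{\conj\nu}{2-k} n m$, and setting it to $0$ yields the theorem.

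The main obstacle I anticipate is establishing and correctly applying this complementarity of index sets: it is what makes all cross terms disappear and simultaneously identifies which coefficient each surviving term represents. It is independent of the coefficient values, amounting to a dimension count (a Riemann--Roch, or Serre-duality, pairing between $\spcfinf k(\Gamma,\nu)$ and $\spcginf{2-k}(\Gamma,\conj\nu)$), and should be proven together with the construction of the bases. Once the differential $h(z)\,dz$ is known to have poles only at $\infty$, the residue computation itself is comparatively routine.
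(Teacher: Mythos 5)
Your proposal is correct and takes essentially the same route as the paper: your residue-theorem computation for the differential $\finf \nu k m \cdot \ginf {\conj\nu}{2-k}n \, dz$ is exactly the vanishing of the Bruinier--Funke modular pairing (Theorem \ref{ThmPairing}) applied to this product, with only the cusp $\infty$ contributing and all cross terms annihilated by the complementarity $\Znu \nu = \parent{-\tI \nu k} \sqcup \tJ {\conj\nu}{2-k}$ of \eqref{Equation: IndexDef2}, which the paper builds into the very definition of the index sets. The only cosmetic difference is that you inline the residue argument (including the checks at elliptic points and the other cusps) rather than quoting the pairing as a black box.
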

This theorem includes the coefficient duality results in~\cite{Adams, Duke-Jenkins, Garthwaite-Jenkins, Haddock-Jenkins, Iba-Jenkins-Warnick, Jenkins-Molnar, Jenkins-Thornton1, Jenkins-Thornton2, VanderWilt} as corollaries. We generalize our duality results in Section~\ref{SecGeneralizations} to forms with different vanishing conditions (see Theorem \ref{ThmVanishingDuality}), and to forms with poles at other cusps, or (almost equivalently) for the expansions of $\finf \nu k m$ and $\ginf {\conj \nu} {2-k} n$ at other cusps (see Theorem \ref{ThmDualityPolesAnywhere}).

Our proof of Theorem \ref{IntroThmDualitySimple} shows that coefficient duality in modular grids can be seen as a consequence of Serre duality by means of the modular pairing defined in Section \ref{Pairing}. Borcherds used a formal version of the pairing, along with Serre duality, to describe the permissible principal parts of weakly holomorphic modular forms in~\cite{Borcherds}, and Bruinier and Funke described the pairing analytically in~\cite[Proposition 3.5]{Bruinier-Funke}. The modular pairing can be interpreted as the sum of residues of a meromorphic differential on the modular curve, coming from the product of the two paired forms; since the sum of such residues must be zero, this gives a linear relation on the coefficients of the forms. Borcherds's work implies coefficient duality within the principal parts of the basis elements, but additional linear algebra and applications of the pairing are required to prove the full statement of duality. This method generalizes Kaneko's proof in a way that applies equally well to weakly holomorphic modular forms of arbitrary level and multiplier.

\begin{remark}
The generality of this theorem relies on the generality of Borcherds's theorem \cite[Theorem 3.1]{Borcherds} (see Theorem \ref{ThmFunkePairingFormExistence}), which is written in terms of vector valued modular forms, and so our results here could be easily adapted to that setting. Other generalizations, such as to modular forms with real weight or with multipliers with infinite image, would require an appropriate generalization of Theorem \ref{ThmFunkePairingFormExistence}.
\end{remark}

Taking inspiration from Zagier's proof of duality, we give an algorithm in Section~\ref{SecGeneratingFunctions} to construct the generating functions which encode the modular grids for spaces of weakly holomorphic forms.

\begin{theorem}\label{ThmGenFn}
Let $\Gamma$, $k,$ and $\nu$ be as above, and let $\parent{\finf \nu k m(z)}_m$ and $\parent{\ginf {\conj \nu} {2-k} n(z)}_n$ be the row-reduced canonical bases for the spaces $\spcfinf k \parent{\Gamma, \nu}$ and $\spcginf {2-k} \parent{\Gamma, \conj \nu}$. There is an explicit generating function $\calH_{k}^{(\nu)}(z, \tau)$ encoding this modular grid; that is, on appropriate regions of $\H \times \H$, the function $\calH_{k}^{(\nu)}(z, \tau)$ satisfies the two equalities
\[
	\calH_{k}^{(\nu)}(z, \tau) = \sum_{m \in \tI \nu k} \finf \nu k m(z) \e^{2\pi \i m \tau} \ \text{and} \ \calH_{k}^{(\nu)}(z, \tau) = -\sum_{n \in \tJ {\overline \nu} {2-k}} \ginf {\conj \nu} {2 - k} n(\tau) \e^{2\pi \i n z}.
\]
The function $\calH_{k}^{(\nu)}(z, \tau)$ is meromorphic on $\H \times \H$ with simple poles when $z = \gamma \tau$ for some $\gamma \in \Gamma$. It is modular in $z$ of weight $k$ with multiplier $\nu$, and modular in $\tau$ of weight $2 - k$ with multiplier $\overline{\nu}$.
\end{theorem}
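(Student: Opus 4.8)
The plan is to build $\calH_{k}^{(\nu)}$ as a single globally meromorphic function on $\H\times\H$ given by an explicit closed form, and then to read both displayed expansions off of its analytic properties. Writing $q_z = \e^{2\pi\i z}$ and $q_\tau = \e^{2\pi\i\tau}$, one could try simply to define $\calH_{k}^{(\nu)}$ by the first sum: this converges, by the standard bound $\ainf\nu k mn \ll \e^{C\sqrt{mn}}$ on the Fourier coefficients of weakly holomorphic forms, and is manifestly weight $k$ with multiplier $\nu$ in $z$ on the region $\im\tau > \im z$, with a parallel statement for the second sum on $\im z > \im\tau$. The difficulty is that these two regions are disjoint, and continuing one expansion across the wall $\im z = \im\tau$ — exactly where the poles $z = \gamma\tau$ accumulate — is not visible from the series. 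I would instead exhibit a closed form that is meromorphic on all of $\H\times\H$ from the outset, following the shape of Zagier's $F(z,\tau)$.

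Concretely, I would first produce a weight-zero kernel $\Psi(z,\tau)$, modular in each variable for $\Gamma$, with a single simple zero along the divisor $\set{z = \gamma\tau : \gamma\in\Gamma}$; for genus zero with one cusp this is $\Psi = t(\tau) - t(z)$ for a Hauptmodul $t$, and in general it comes from a function on $X_\Gamma\times X_\Gamma$ whose divisor is the diagonal plus cuspidal corrections. I would then set $\calH_{k}^{(\nu)}(z,\tau) = N(z,\tau)/\Psi(z,\tau)$, where $N$ is a finite $\C$-linear combination of products $\finf\nu k m(z)\,\ginf{\conj\nu}{2-k}n(\tau)$ of low-order basis elements. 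The combination is chosen (this is the algorithm of Section~\ref{SecGeneratingFunctions}) to accomplish three things at once: to install weight $k$ with multiplier $\nu$ in $z$ and weight $2-k$ with multiplier $\conj\nu$ in $\tau$; to cancel every zero of $\Psi$ lying off the $\Gamma$-diagonal, together with all unwanted cuspidal poles, so that the only poles of $\calH_{k}^{(\nu)}$ are the simple poles along $z=\gamma\tau$; and to normalize the residue along $z=\tau$. That such a finite $N$ exists is a Riemann--Roch / dimension count on $X_\Gamma \times X_\Gamma$, and the complementarity $\tI\nu k\sqcup(-\tJ{\conj\nu}{2-k}) = \Znu{\conj\nu}$ of the index sets recorded in Section~\ref{SecDualityWHMF} is what makes the bookkeeping close up.

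With $\calH_{k}^{(\nu)}$ in hand, I would read off the two expansions using uniqueness of the canonical bases. Fix $z$ and let $\im\tau\to\infty$: since the poles $\tau = \gamma^{-1}z$ have bounded height, $\tau\mapsto\calH_{k}^{(\nu)}(z,\tau)$ is holomorphic for $\im\tau$ large and, being weight $2-k$ with multiplier $\conj\nu$, has a $q_\tau$-expansion $\sum_m c_m(z)\, q_\tau^m$. The simple pole with its normalized residue along $z=\tau$ forces each principal part to be $q_z^{-m}$, while weight-$k$ modularity in $z$ and the absence of off-diagonal poles make $c_m$ a weight-$k$ multiplier-$\nu$ form holomorphic away from $\cusp\infty$ carrying the row-reduced normalization; hence $c_m = \finf\nu k m$, giving the first equality. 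Exchanging the roles of $z$ and $\tau$ gives the second equality, the global sign coming from the orientation of the residue, and Theorem~\ref{IntroThmDualitySimple} guarantees that the numerical coefficients produced by the two expansions agree, confirming that they are expansions of one and the same function.

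The main obstacle is the construction in the second paragraph: arranging the finite numerator $N$ so that all off-diagonal zeros of $\Psi$ and all spurious cuspidal poles cancel while the diagonal residue comes out correctly. This is precisely the point at which the naive approach of splitting each sum into a principal geometric part and a regular double sum fails to be globally meromorphic — the principal part, summed via the complementary index sets, is rational in a local variable at $\cusp\infty$ and carries extraneous poles that must be absorbed by the regular part upon continuation across $\im z = \im\tau$. In the presence of cusp forms, several cusps, or higher genus there is no single Hauptmodul difference to serve as $\Psi$, and ensuring the requisite cancellations is where the real work of Theorem~\ref{ThmGenFn} lies; the transformation laws and expansion statements then follow formally.
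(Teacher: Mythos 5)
There is a genuine gap: the entire construction in your second paragraph --- the existence of the kernel $\Psi$ and of the finite numerator $N$ --- is asserted rather than proved, and in its natural formulation it is false. You need a weight-zero, $\Gamma$-invariant function $\Psi(z,\tau)$ whose divisor on $X(\Gamma)\times X(\Gamma)$ is the diagonal plus divisors supported over the cusps. When $X(\Gamma)$ has positive genus, no such function exists: a principal divisor is homologically trivial, the diagonal class has a nonzero $H^1\otimes H^1$ K\"unneth component for genus $\geq 1$, while vertical and horizontal (cuspidal) divisors contribute only to $H^0\otimes H^2$ and $H^2\otimes H^0$, so the diagonal plus cuspidal corrections is never principal. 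Your fallback --- allow $\Psi$ extra off-diagonal zeros and choose a finite combination $N=\sum_j c_j\, \finf \nu k {m_j}(z)\, \ginf {\conj \nu}{2-k}{n_j}(\tau)$ cancelling them while normalizing the diagonal residue and enforcing the row-reduced cuspidal behavior --- is precisely the hard point, and ``Riemann--Roch / dimension count on $X_\Gamma\times X_\Gamma$'' is a placeholder, not an argument: a dimension count produces sections with bounded poles, but does not show that one of them vanishes on the entire off-diagonal zero locus of $\Psi$ \emph{and} has the correct residue \emph{and} the correct behavior at every cusp simultaneously. You concede this yourself in the last paragraph (``ensuring the requisite cancellations is where the real work \ldots lies''), so the proposal defers the core of the theorem rather than proving it; the subsequent reading-off of the two expansions also silently uses the deferred normalizations.

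The paper's proof inverts your logic and thereby avoids the obstruction entirely. It \emph{defines} $\finfgen \nu k (z,\tau)$ by the explicit series \eqref{FGenDef}: a rational ``wall'' term $p^{\frac{1-\varsigma}{w}}q^{\frac{\varsigma}{w}}/\bigl(q^{\frac 1w}-p^{\frac 1w}\bigr)$, two \emph{finite} sums indexed by the holomorphic index sets $\I \nu k$ and $\J {\conj \nu}{2-k}$, and the double sum $\sum_m \sum_n \ainf {\nu}{k}{m}{n}\, p^m q^n$, whose convergence is controlled by circle-method/Maass--Poincar\'e coefficient bounds (or which may be treated formally). The two expansions then drop out immediately: expanding in $p$ gives $\sum_m \finf \nu k m (z) p^m$ by construction, and expanding in $q$ gives $-\sum_n \ginf {\conj \nu}{2-k} n (\tau) q^n$ by Theorem \ref{IntroThmDualitySimple} --- no residue computation or uniqueness argument is needed. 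For the closed form, the paper multiplies by $H_\Gamma(z)-H_\Gamma(\tau)$ for an \emph{arbitrary} nonconstant $H_\Gamma\in M^!_0(\Gamma)$ --- which exists for every commensurable $\Gamma$, with no genus or Hauptmodul hypothesis --- and shows via the coefficient identities \eqref{ft}--\eqref{gt} and the three pole-order observations that the product $\calK(z,\tau)$ lies in a finite-dimensional subspace of $M^!_k(\Gamma,\nu)\otimes M^!_{2-k}(\Gamma,\conj\nu)$; dividing back yields the rational expression. The extraneous zeros of $H_\Gamma(z)-H_\Gamma(\tau)$ that worry you require no engineered cancellation: since $\finfgen \nu k$ is defined independently of $H_\Gamma$, its polar locus is contained in the zero locus of $H_\Gamma(z)-H_\Gamma(\tau)$ for \emph{every} choice of $H_\Gamma$, and intersecting over choices pins the poles to exactly $z=\gamma\tau$, simple. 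Relatedly, your dismissal of the series-splitting approach as ``failing to be globally meromorphic'' misses the point of the paper's argument: the series is never required to converge globally; global meromorphy is read off a posteriori from the rational expression.
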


\begin{remark}
The technique described in the proof of Theorem \ref{ThmGenFn} can be used to construct the generating functions given in~\cite{Adams, Ahlgren, Atkinson, Duke-Jenkins, Garthwaite-Jenkins, Haddock-Jenkins, Jenkins-Molnar, VanderWilt, Ye, ZagierTSM}, as well as many of the generating functions found in~\cite{El-Guindy} for sequences of modular forms that form canonical bases for appropriate spaces.
\end{remark}

New modular grids can also be constructed through various transformations and linear operations on existing modular grids. We describe three such operations in Section~\ref{SecArithmetic}.

When $M_k(\Gamma,\nu)$ and $S_{2-k}(\Gamma,\overline\nu)$ are both trivial, the generating functions $\calH_{k}^{(\nu)}(z, \tau)$ described in Theorem \ref{ThmGenFn} exhibit surprising symmetry under some of these linear operations.  For example, we show in Theorems \ref{ThmMatrixSymm} and \ref{ThmTrace} that, under these conditions, the action by a matrix or trace operation in one variable corresponds with a similar action in the other variable. Letting $|_{\kappa,w}T_M$ denote the weight $\kappa$ action of the Hecke operator in the variable $w$, normalized as in \eqref{HeckeDef}, we have the following theorem relating the Hecke operators in variables of dual weights.

\begin{theorem}\label{HeckeSym}
 Let $k$ be a positive even integer and $\Gamma\supseteq\Gamma_0(N)$ for some $N\in \N$. Moreover, let $\nu$ be a multiplier which is trivial on
 $\Gamma_0(N)$, such that $S_{k}(\Gamma,\overline\nu)=\{0\}.$ If $M$ is a positive integer coprime to $N$, then
 \[
\finfgen \nu {2-k} (z, \tau)\big|_{k,\tau}T_{M}= \finfgen \nu {2-k} (z, \tau)\big|_{2-k,z}T_{M}.
 \]
\end{theorem}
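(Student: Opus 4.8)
The plan is to build the weight-$k$ operator $T_M$ in the variable $\tau$ out of the two elementary operations whose symmetry has already been established, namely slashing by a single integral matrix of determinant $M$ (Theorem~\ref{ThmMatrixSymm}) and taking a trace from a finite-index subgroup (Theorem~\ref{ThmTrace}), and then to transport the whole construction to the $z$-variable one operation at a time. With the normalization of~\eqref{HeckeDef} we may write, for the meromorphic kernel $\finfgen \nu {2-k}$ of Theorem~\ref{ThmGenFn},
\[
	\finfgen \nu {2-k}(z,\tau)\big|_{k,\tau}T_M = \sum_{\delta} \finfgen \nu {2-k}(z,\tau)\big|_{k,\tau}\delta,
\]
where $\delta$ runs over the upper-triangular representatives $\smatrix{a}{b}{0}{d}$ with $ad=M$ and $0\le b<d$. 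The hypotheses that $M$ is coprime to $N$ and that $\nu$ is trivial on $\Gamma_0(N)$ are what make this double-coset operator well defined on forms for $\Gamma$ with multiplier $\conj\nu$ and weight-preserving in $\tau$; the assumption $S_k(\Gamma,\conj\nu)=\{0\}$ (together with the automatic vanishing of $M_{2-k}(\Gamma,\nu)$, since $2-k\le 0$) is precisely the triviality hypothesis under which Theorems~\ref{ThmMatrixSymm} and~\ref{ThmTrace} apply to $\finfgen \nu {2-k}$.

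First I would apply the matrix symmetry of Theorem~\ref{ThmMatrixSymm} to each summand. Slashing $\finfgen \nu {2-k}$ in $\tau$ by $\delta$ moves the simple polar locus $z=\gamma\tau$ of the kernel to $z=\gamma\,\delta\tau$, and Theorem~\ref{ThmMatrixSymm} identifies this with the effect of slashing in $z$ by a companion matrix $\delta'$ of the same determinant $M$, so that $\finfgen \nu {2-k}\big|_{k,\tau}\delta=\finfgen \nu {2-k}\big|_{2-k,z}\delta'$. Substituting termwise yields
\[
	\finfgen \nu {2-k}(z,\tau)\big|_{k,\tau}T_M = \sum_{\delta}\finfgen \nu {2-k}(z,\tau)\big|_{2-k,z}\delta'.
\]
What remains is to recognize the right-hand sum as the weight-$(2-k)$ operator $T_M$ acting in $z$; here I would invoke the trace symmetry of Theorem~\ref{ThmTrace} to absorb the reorganization of cosets, since $T_M$ in either weight is itself a trace of a matrix slash from a congruence subgroup.

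The step I expect to be the main obstacle is exactly this last rearrangement: one must check that as $\delta$ runs over the chosen representatives the companions $\delta'$ run, modulo left multiplication by $\Gamma$ and with matching normalizing constants, over a complete and irredundant set of representatives for the weight-$(2-k)$ Hecke operator in $z$. The delicacy is that the normalizing factor attached to a slash by $\smatrix{a}{b}{0}{d}$ scales as a power of $d$ that differs between weight $k$ and weight $2-k$, so the two sides differ a priori by a relative factor that is a power of $d$; the change of variables hidden in $\delta\mapsto\delta'$ must supply exactly the compensating Jacobian and determinant factors, and tracking these under the duality $\kappa\leftrightarrow 2-\kappa$ is the genuine bookkeeping content of the theorem. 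Once the representatives and constants are matched one obtains
\[
	\sum_{\delta}\finfgen \nu {2-k}(z,\tau)\big|_{2-k,z}\delta' = \finfgen \nu {2-k}(z,\tau)\big|_{2-k,z}T_M,
\]
which is the asserted identity.

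Finally, I would double-check the outcome against the Fourier picture. Writing $\finfgen \nu {2-k}=-\sum_n \ginf {\conj\nu} k n(\tau)\,\e^{2\pi\i n z}$ on the region where $\im z$ is large, the left-hand operator acts coefficientwise as $\ginf {\conj\nu} k n\mapsto \ginf {\conj\nu} k n\big|_{k}T_M$, while expanding the right-hand operator and summing the resulting geometric sums over $b\bmod d$ produces the familiar combination of the $\ginf {\conj\nu} k n$ with Hecke weights. Verifying that these two expansions agree is a clean sanity check that the normalization in~\eqref{HeckeDef} is the one under which the symmetry holds, and it isolates the same coset-and-constant computation identified above as the crux.
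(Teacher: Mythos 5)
Your central step fails before the bookkeeping you flag ever becomes the issue. Theorem \ref{ThmMatrixSymm} requires $\alpha,\beta\in\widehat\Gamma$ with $\alpha\Gamma=\beta\Gamma$, $\Gamma^{\alpha}=\Gamma=\Gamma^{\beta}$ and $\nu^{\alpha}=\nu=\nu^{\beta}$, and the Hecke representatives $\delta=\smatrix{a}{b}{0}{d}$ with $ad=M>1$ satisfy none of these hypotheses: the rescaled matrix $M^{-1/2}\smatrix{M}{0}{0}{1}$ does not lie in any maximal commensurable subgroup $\widehat\Gamma\supseteq\Gamma$, and $\delta$ does not normalize $\Gamma$ (conjugation by $\smatrix{M}{0}{0}{1}$ carries $\Gamma_0(N)$ to a group commensurable with $\Gamma_0(NM)$, not to $\Gamma_0(N)$). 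More decisively, your claimed termwise identity $\finfgen{\nu}{2-k}\big|_{k,\tau}\delta=\finfgen{\nu}{2-k}\big|_{2-k,z}\delta'$ cannot hold for any single companion $\delta'$: the left-hand side has its poles along $z=\gamma\delta\tau$ for $\gamma\in\Gamma$, the right-hand side along $z=\delta'^{-1}\gamma\tau$, and matching these polar loci would force $\Gamma\delta=\delta'^{-1}\Gamma$, which (taking $\delta'^{-1}=\gamma_0\delta$) forces $\delta\Gamma\delta^{-1}$ to be a coset of $\Gamma$, i.e.\ forces $\delta$ to normalize $\Gamma$. The symmetry of the polar divisor is a property of the full double coset $\Gamma\delta\Gamma$, not of its individual left cosets, so the middle display in your proposal is false term by term. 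Theorem \ref{ThmTrace} cannot repair this either: it only treats traces between groups $\Gamma\subseteq\Gamma'\subseteq\widehat\Gamma$ inside a single commensurable tower, whereas the reorganization you need moves between $\Gamma$ and the non-normalizing conjugates $\delta^{-1}\Gamma\delta$.

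What you demote to a final ``sanity check'' is in fact the paper's entire proof, and as you describe it, it is missing its key ingredient. The paper expands $\finfgen{\nu}{2-k}(z,\tau)\big|_{k,\tau}T_M$ in powers of $p$ on $\im\tau>\im z$ (where $T_M$ acts on the coefficients of $p^m$ via the dual form of \eqref{EqnHecke}) and in powers of $q$ on $\im\tau<\im z$ (where it acts directly on the forms $\ginf{\conj\nu}{k}{n}$), and the crux is the identity $\ginf{\conj\nu}{k}{n}\big|_k T_M=\sum_{d\mid\gcd(n,M)}\parent{M/d}^{k/2}d^{1-k/2}\,\ginf{\conj\nu}{k}{nM/d^2}$. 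This is \emph{not} automatic from ``summing geometric sums over $b\bmod d$'': a priori the Hecke image of a basis element could differ from that combination by a cusp form or an Eisenstein discrepancy. One needs that $T_M$ preserves $\spcginf{k}(\Gamma,\conj\nu)$ and that, precisely because $S_k(\Gamma,\conj\nu)=\set{0}$, a form in $\spcginf{k}(\Gamma,\conj\nu)$ is determined by its principal part at $\infty$ (the holomorphic piece being at most one Eisenstein series with constant term only at $\infty$); the analogous statement on the $z$-side uses the automatic triviality of $S_{2-k}(\Gamma,\nu)$ for $k\geq2$. Matching the two expansions of each side then yields the theorem. So the correct skeleton is present in your last paragraph, but you have inverted its role, and the step where the hypothesis $S_k(\Gamma,\conj\nu)=\set{0}$ actually does its work --- pinning down $\ginf{\conj\nu}{k}{n}\big|_kT_M$ by principal parts --- is absent.
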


\section*{Acknowledgments}
We thank John Voight and Pavel Guerzhoy for helpful discussions, and we thank the anonymous referee for many thoughtful suggestions which helped improve this paper.

\section{Modular groups and cusps} \label{SecModularity and cusps}

In this section, we give background on modularity, certain useful generalizations of the metaplectic group, commensurable subgroups, and functions which are modular over these groups. We also describe how the Fourier coefficients of those functions at each cusp may be defined.
\subsection{Modularity and generalizations of the metaplectic group }

Following Kohnen \cite{Kohnen}, for $k \in \R$ we define $\mathfrak G_{k}$ to be the set of pairs $(\gamma,\phi)$, where $\gamma=\smatrix abcd\in \GL_2^+(\R)$ and $\phi$ is a holomorphic function on the upper half plane, satisfying $|\phi(z)|=\det( \gamma)^{-k/2}|cz+d|^{k}.$
This set has a group operation given by
 \begin{equation}\label{gp_law}
\big(\gamma,\phi(z)\big)\big(\mu,\psi(z)\big) \, =\, \big(\gamma\mu, \  \phi(\mu z)\psi(z)\big),\end{equation}
where $\smatrix abcd z=\frac{az+b}{cz+d}.$
We will be primarily interested in the case $k\in \frac12\Z$.  Note that if $k=\frac12$, $\mathfrak G_k$ is
an extension of the usual metaplectic group $\textrm{Mp}_2(\R)$, which is defined similarly except that we require $\gamma=\smatrix abcd\in \SL_2(\R)$ and $\phi(z)=\pm\sqrt{cz+d}$.

Given a function $f: \H \to \C$ and $(\gamma,\phi)\in \mathfrak G_k$, the action of the weight $k$ Petersson slash operator is defined by
\begin{align}\label{SlashDefM2}
	f(z)|_{k}(\gamma,\phi) = \phi(z)^{-1} f(\gamma z).
\end{align}
If $\mathbf \Gamma$ is any subgroup of $\mathfrak G_k,$ we say that $f$ is modular of weight $k$ for
$\Gamma$ if $f(z)|_{k}(\gamma,\phi)=f(z)$ for every $(\gamma,\phi)\in \mathbf\Gamma.$

The subgroups $\mathbf \Gamma\subseteq \mathfrak G_k$ for which there are nonzero functions $f$ which are modular for $\mathbf \Gamma$ satisfy certain properties purely as algebraic consequences of the group action.
\begin{proposition}\label{MetaplecticProp} Let $\mathbf \Gamma\subseteq \mathfrak G_k$ be a subgroup such that there is a function $f$
which is  modular for $\mathbf \Gamma$ and nonzero at some point $z_0\in \H$. Then the following are true.
\begin{enumerate}
\item The projection $(\gamma,\phi)\to \gamma$ is a group isomorphism $\mathbf \Gamma\to \Gamma$ for some group $\Gamma\subseteq \GL_2^+(\R)$.\label{Item: projection is an isomorphism}

\item For each $\gamma\in \Gamma$ defined in part \eqref{Item: projection is an isomorphism}, there is a unique $\phi$ such that  $(\gamma,\phi)\in \mathbf \Gamma.$
\item If $(\gamma,\phi)\in \mathbf \Gamma$ and $\gamma=\smatrix a00a$ is a scalar matrix, then $\phi=1.$\label{Item: scalar matrices yield phi = 1}

\end{enumerate}
\end{proposition}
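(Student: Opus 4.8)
The plan is to analyze the projection map $\pi \colon \mathfrak{G}_k \to \GL_2^+(\R)$ sending $(\gamma, \phi) \mapsto \gamma$. First I would check directly from the group law \eqref{gp_law} that $\pi$ is a group homomorphism, so that $\Gamma := \pi(\mathbf{\Gamma})$ is a subgroup of $\GL_2^+(\R)$ and $\pi$ restricts to a surjective homomorphism $\mathbf{\Gamma} \to \Gamma$. With this in hand, part \eqref{Item: projection is an isomorphism} and the uniqueness asserted in part (2) both reduce to a single claim, namely that $\pi|_{\mathbf{\Gamma}}$ is injective; existence of $\phi$ in part (2) is immediate since $\gamma \in \Gamma = \pi(\mathbf{\Gamma})$, and uniqueness of $\phi$ for a given $\gamma$ is precisely triviality of the kernel.

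To compute the kernel, I would note that any element of $\ker \pi$ has the form $(\id, \phi)$, and the defining modulus condition with $\gamma = \id$ forces $|\phi(z)| = 1$ for all $z \in \H$. Since $\phi$ is holomorphic with constant modulus on the connected domain $\H$, the maximum modulus principle shows $\phi$ is a constant $c$ with $|c| = 1$; one also checks from \eqref{gp_law} that the identity of $\mathfrak{G}_k$ is $(\id, 1)$. The decisive step is then to invoke the nonvanishing hypothesis: if $(\id, c) \in \mathbf{\Gamma}$, modularity of $f$ gives $c^{-1} f(z) = f(z)$ for all $z$, and evaluating at $z_0$ with $f(z_0) \neq 0$ forces $c = 1$. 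Hence $\ker(\pi|_{\mathbf{\Gamma}})$ is trivial, proving parts (1) and (2) at once.

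For part (3), I would observe that a scalar matrix $\gamma = \smatrix a00a$ acts as the identity on $\H$, since $\gamma z = z$. The modulus condition, with $\det \gamma = a^2$ and lower row $(0,a)$, gives $|\phi(z)| = (a^2)^{-k/2}|a|^k = 1$, so $\phi$ is again a constant of modulus $1$. The relation $\phi(z)^{-1} f(\gamma z) = \phi(z)^{-1} f(z) = f(z)$, evaluated at $z_0$, forces this constant to be $1$, so $\phi \equiv 1$.

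These arguments are short and largely formal. The only genuine analytic input is that a holomorphic function of constant modulus on $\H$ is constant, and the one place the hypothesis on $f$ is indispensable is in ruling out nontrivial phase factors $c$ with $|c|=1$. I therefore expect no serious obstacle; the only point requiring care is the correct use of nonvanishing (evaluation at the single point $z_0$ already pins down the constant, and holomorphy of $f$ guarantees $f$ does not vanish identically near $z_0$).
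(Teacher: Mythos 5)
Your proposal is correct and follows essentially the same route as the paper: both reduce parts (1) and (2) to the scalar-matrix case by observing that two lifts of the same $\gamma$ differ by an element over the identity (your trivial-kernel formulation is just the paper's computation $(\gamma,\phi)(\gamma,\psi)^{-1}=(I,\cdot)$ repackaged), and both settle part (3) by noting $|\phi|\equiv 1$ forces $\phi$ constant, then evaluating the modularity relation at $z_0$ with $f(z_0)\neq 0$ to pin down $\phi\equiv 1$. The only cosmetic difference is that you verify kernel triviality first and deduce (1)--(2) together, whereas the paper chains the reductions $(1)\Rightarrow(2)\Rightarrow(3)$.
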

\begin{remark}
Let $\mathfrak S_k=\{(\gamma,\phi)\in \mathfrak G_k \ : \ \gamma\in \SL_2(\R) \},$ and let $\mathfrak I=\{(\smatrix a00a, 1) \ : \ a>0\}.$
In light of part \eqref{Item: scalar matrices yield phi = 1} of Proposition \ref{MetaplecticProp}, we generally restrict our attention to subgroups $\mathbf \Gamma \subseteq\mathfrak S_k \simeq \mathfrak G_k /\mathfrak I$. The reason for taking the quotient by only the {positive} scalar matrices (at least when $k\in \frac12 +\Z$) will be apparent later as we define multipliers. Often we take advantage of this isomorphism to allow us to write certain matrices (such as the Fricke involution in example \ref{ExFricke} below) as integer matrices, rather than using square roots or fractions.
\end{remark}

\begin{proof}
\begin{enumerate}
	\item Note that the definition of the group law implies that the projection $(\gamma,\phi)\to \gamma$ is a group homomorphism to $\GL_2^+(\R)$. The projection is clearly surjective onto its image. Thus, to complete the proof of part (1), it suffices to prove part (2).
	
\item Suppose $(\gamma,\phi), (\gamma,\psi)\in \mathbf \Gamma.$ Then $(\gamma,\psi(z))^{-1}=(\gamma^{-1},\psi(\gamma z)^{-1}),$ and so
	\[(\gamma,\phi(z)) (\gamma,\psi(z))^{-1} = \left(I, \phi(\gamma z)\psi(\gamma z)^{-1}\right).
	\]
	Thus, in order to show $\phi=\psi,$ it suffices to prove part (3).

\item Suppose $(\gamma,\phi)\in \mathbf \Gamma$ with $\gamma=\smatrix a00a$ for some $a\in \R.$ By the definition of $\mathfrak G_k$, we have that  $\phi$ is a holomorphic function with $|\phi(z)| = 1$, and is therefore constant. If $f: \H \to \C$ is modular for $\mathbf \Gamma,$  and $f(z_0)\neq 0$, then
\[\phi(z_0)^{-1} f(z_0) =\phi(z_0)^{-1} f(\gamma z_0)=f|_{k}(\gamma,\phi)(z_0)=f(z_0).\] Therefore $\phi(z_0)=1$.
\end{enumerate}
\end{proof}

Given $\gamma = \smatrix abcd \in\GL_2^+(\R),$ let
\[
	j(\gamma,z)=\det(\gamma)^{-1/2}(cz+d).
\]
If $k\in \frac12 \Z$, then part (2) of Proposition \ref{MetaplecticProp} implies that any such $\mathbf \Gamma$ is of the form $\{(\gamma,\nu (\gamma) j(\gamma,z)^{k}\ : \ \gamma\in \Gamma\}$ for some $\Gamma\subseteq \GL_2^+(\R)$ and some well-defined map $\nu: \Gamma \to \C^\times.$  Here we take the principal branch of the square root when $k\in \frac12+\Z$.

\begin{definition}
Suppose $\Gamma$ is a discrete subgroup of $\GL_2^+(\R)$ and $k\in \frac12\Z$. If $\nu:\Gamma\to \C^\times$ is a function such that $\{(\gamma, \nu(\gamma)j(\gamma, z)^k)\ : \ \gamma\in \Gamma\}$ is a subgroup of $\mathfrak G_k$, and $\nu(\gamma){j(\gamma,z)}^{k}=1$ for all scalar matrices $\gamma\in\Gamma$, then we say that $\nu$ is a \emph{consistent weight $k$ multiplier}~\cite[Section 2.6]{Iwaniec}. For scalar matrices $\gamma$, this forces $|\nu(\gamma)| = 1$. If additionally the image of $\nu$ is finite, we say that $\nu$ is a \emph{finite multiplier}; in this case $|\nu(\gamma)| = 1$ for all $\gamma$.
\end{definition}
\begin{proposition}\label{nuProp}
Let $\Gamma$ be a discrete subgroup of $\GL_2^+(\R),$ $k\in \frac12 \Z,$ and $\nu$ a consistent weight $k$ multiplier. Then the following are true.
\begin{enumerate}
\item If $\gamma,\mu\in \Gamma,$ then
\[
\nu(\gamma\mu)=\pm \nu(\gamma)\nu(\mu),
\]
where the sign is determined by~\eqref{gp_law}. If $k\in \Z,$ then the sign is positive, so $\nu$ is a character.
\item If $k'\in \Z,$ then $\nu$ is also a consistent weight $k+k'$ multiplier.
\item The function $\overline \nu$ is a consistent weight $-k$ multiplier and thus a weight $2-k$ multiplier.
\item If $\gamma=\smatrix a00a\in \Gamma$ is a scalar matrix, then $\nu(\gamma)=\begin{cases} 1& \text{ if } a>0\\ (-\i)^{2k}& \text{ if } a<0.\\\end{cases}$\label{Item: nu of scalar matrices}
\end{enumerate}
\end{proposition}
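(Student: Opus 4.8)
The plan is to derive all four parts from the group law \eqref{gp_law} together with the cocycle relation for the automorphy factor, keeping careful track of the branch of the principal square root. The starting point is the identity $j(\gamma\mu, z) = j(\gamma, \mu z)\, j(\mu, z)$, which holds because $cz+d$ satisfies the usual cocycle relation and $\det$ is multiplicative, so the positive square roots multiply cleanly. For part (1) I would expand the product $\big(\gamma, \nu(\gamma)j(\gamma,z)^k\big)\big(\mu, \nu(\mu)j(\mu,z)^k\big)$ using \eqref{gp_law}; since the defining set of $\nu$ is a subgroup it contains this product, and since that set is the graph of the well-defined function $\gamma \mapsto \nu(\gamma)j(\gamma,z)^k$, its second coordinate must be $\nu(\gamma\mu)j(\gamma\mu,z)^k$. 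Comparing the two expressions yields $\nu(\gamma\mu)j(\gamma\mu,z)^k = \nu(\gamma)\nu(\mu)\, j(\gamma,\mu z)^k j(\mu, z)^k$. The only subtlety is that for $k \in \frac12+\Z$ the identity $(ab)^k = a^k b^k$ can fail by a factor of $-1$ owing to the branch cut of the principal square root; this factor is precisely the sign $\pm$ in the statement, governed by \eqref{gp_law}. When $k \in \Z$ the $k$th power is single-valued, the sign is $+1$, and $\nu$ is a genuine character.

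Next I would compute $\nu$ on scalar matrices, obtaining part (4), since these values feed into parts (2) and (3). For $\gamma = \smatrix a00a$ one has $j(\gamma, z) = \det(\gamma)^{-1/2}\,a = |a|^{-1}a = \operatorname{sign}(a)$, so the consistency condition $\nu(\gamma)j(\gamma,z)^k = 1$ forces $\nu(\gamma) = 1$ when $a>0$ and $\nu(\gamma) = (-1)^{-k}$ when $a<0$. Unwinding the principal branch gives $(-1)^{-k} = \e^{-\i\pi k} = (-\i)^{2k}$, which is part (4).

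For part (2), the key observation is that for $k'\in\Z$ the factor $j(\gamma,z)^{k'}$ is single-valued and satisfies the cocycle relation on the nose, so multiplying the weight-$k$ factor by it does not alter the branch sign found in part (1). Hence $\gamma \mapsto \big(\gamma, \nu(\gamma) j(\gamma,z)^{k+k'}\big)$ obeys the same multiplicativity relation, and membership in $\mathfrak G_{k+k'}$ is the routine absolute-value check using $|\nu(\gamma)|=1$; so the image is a subgroup of $\mathfrak G_{k+k'}$. For part (3) I would conjugate the relation from part (1): because the branch sign is real it is preserved under conjugation, so $\overline\nu$ inherits the same cocycle sign; the absolute-value condition places $\big(\gamma, \overline{\nu(\gamma)}\, j(\gamma,z)^{-k}\big)$ in $\mathfrak G_{-k}$, and on a negative scalar the conjugation together with the negated weight cancels the awkward sign (one checks $\overline{(-1)^{-k}}\,(-1)^{-k}=1$). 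Thus $\overline\nu$ is a consistent weight $-k$ multiplier, and applying part (2) with the even value $k'=2$ upgrades it to weight $2-k$.

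The step I expect to be the main obstacle is the scalar-matrix normalization in part (2) rather than the homomorphism property, which is formal. For a negative scalar $\smatrix a00a$ with $a<0$ one finds $\nu(\gamma)\,j(\gamma,z)^{k+k'} = (-1)^{-k}(-1)^{k+k'} = (-1)^{k'}$, so the consistency condition at weight $k+k'$ is delicate and hinges on the parity of $k'$; it is automatic when $a>0$, and in $\SL_2$ the only negative scalar to inspect is $-I$. Getting this sign bookkeeping exactly right — tracking the half-integer power of the cocycle consistently across the weight shift and at $-I$ — is where the care is concentrated.
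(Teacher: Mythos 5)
Your route is the same as the paper's: the published proof of parts (1)--(3) is literally the sentence ``simple consequences of the group law,'' and your part (4) is exactly the paper's computation ($\nu(\gamma)j(\gamma,z)^k=1$ at scalars together with $j(\gamma,z)=\operatorname{sgn}(a)$, then $(-1)^{-k}=(-\i)^{2k}$ on the principal branch). Your fleshed-out details for (1), (3), and (4) are correct: the exact cocycle $j(\gamma\mu,z)=j(\gamma,\mu z)\,j(\mu,z)$, the $\pm$ coming from the branch cut of the half-integral power, the observation that the sign is real and equal to its own inverse, so it survives both complex conjugation and the passage $k\mapsto -k$ needed for part (3), and the cancellation $\overline{(-1)^{-k}}\,(-1)^{-k}=1$ at negative scalars. (Note also that $|\nu(\gamma)|=1$ for \emph{all} $\gamma$ is already forced by membership of the graph in $\mathfrak G_k$, so your absolute-value checks are legitimate.)

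The one genuine problem is part (2), and it sits exactly where your last paragraph stalls. Your computation there is right: at a negative scalar, $\nu(\gamma)\,j(\gamma,z)^{k+k'}=(-1)^{k'}$. But you then call this ``delicate'' and stop, when for odd $k'$ it is fatal: the scalar-normalization clause in the definition of a consistent weight $k+k'$ multiplier demands that this quantity equal $1$, and no bookkeeping produces that. Concretely, the trivial multiplier on $\SL_2(\Z)$ is a consistent weight-$0$ multiplier but not a consistent weight-$1$ multiplier, since $\nu(-I)\,j(-I,z)^{1}=-1$; equivalently, part (4) forces $\nu(-I)=(-\i)^{2k}$, a weight-dependent value, so one and the same $\nu$ cannot be consistent in two weights of different parity once $-I\in\Gamma$. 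Thus part (2) as literally stated fails for odd $k'$ when $\Gamma$ contains a negative scalar, and your proof of it cannot be completed; it does hold (and your exact-cocycle argument, which correctly handles the subgroup condition, completes it) for even $k'$, or for groups with no negative scalars. The paper's one-line proof glosses this entirely, and the paper only ever invokes part (2) with $k'=2$ --- notably in part (3)'s ``thus a weight $2-k$ multiplier'' --- so your argument, restricted to even $k'$, proves everything the paper actually uses. You should either impose the parity restriction explicitly or record the counterexample; as written, your treatment of (2) ends at an obstruction rather than a conclusion.
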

\begin{proof}
The first three are simple consequences of the group law \eqref{gp_law}. Part \eqref{Item: nu of scalar matrices} follows from Proposition \ref{MetaplecticProp} (3) since in this case $\nu(\gamma)j(\gamma,z)^{k} = 1,$ and $j(\gamma,z)=\text{sgn}(a).$
\end{proof}

In the remainder of this paper, we will generally describe modularity in terms of a subgroup $\Gamma\in \GL_2^+(\Z)$ and a multiplier $\nu,$ rather than in terms of a subgroup $\mathbf \Gamma \subseteq \mathfrak G_k.$ Given a function $f: \H \to \C$, the action of the weight $k$ Petersson slash operator for an element $\gamma \in \GL_2^+(\R)$ is given by
\begin{align}\label{SlashDef}
	f(z)|_{k}\gamma = j(\gamma,z)^{-k} f(\gamma z).
\end{align}
If $\nu$ is a consistent weight $k$ multiplier for some group $\Gamma$, then we define
\[
	f(z)|_{k,\nu}\gamma
	 = \nu(\gamma)^{-1}j(\gamma,z)^{-k} f(\gamma z).
\]
We say $f$ is modular of weight $k$ for $\Gamma$ with multiplier $\nu$ if $f(z)|_{k,\nu}\gamma=f(z)$ for every $\gamma\in \Gamma.$

\begin{example}\label{ExFricke}
Let $N\in \N$, and let $\Gamma_0^+(N)$ be the group generated by $\Gamma_0(N)$ and the Fricke involution $w_N=\smatrix0{-1}N0.$ If $k\in \frac12\Z$ and $\nu$ is any consistent weight $k$ multiplier for $\Gamma_0^+(N),$ then there are only two possibilities for $\nu(w_N).$  Using \eqref{gp_law} to square the element $(w_N, \nu(w_N)N^{-k/2}(Nz)^k),$ we find that
\[
\nu(w_N)^2=\nu(-NI)=\nu(-I)=(-\i)^{2k},
\]
where $I$ is the identity matrix. Hence $\nu(w_N)=\pm(-\i)^k$. In particular, we conclude that if a function $f$ is modular for $\Gamma_0(N)$ and is also an eigenfunction for the Fricke involution, then it has one of two possible eigenvalues, which are determined by the weight.

For instance, the group $\Gamma_0^+(4)$ is generated by the matrices $\smatrix1101,$ $\smatrix1041,$ $-I$ and $w_4.$
The standard theta function $\theta(z)=\sum_{n\in \Z} q^{n^2}$ is modular of weight $1/2$ for $\Gamma_0^+(4)$ with a multiplier $\nu$ which is trivial on $\smatrix1101$ and $\smatrix1041,$ while $\nu(-I)=-\i$ and $\nu(w_4)=\sqrt{-\i}$.

\end{example}

\subsection{Commensurable subgroups}

Let $G$ and $H$ be subgroups of a group $K$. We say that
$G$ is commensurable with $H$ if \[
[G: G\cap H] < \infty \ \text{and} \ [H : G \cap H] < \infty.
\]
In this paper we are interested in groups $\Gamma\subseteq\SL_2(\R)$ that
are commensurable with $\SL_2(\Z)$. For the remainder of this paper, we will simply refer to such groups as \emph{commensurable subgroups.}

Borel showed that every commensurable subgroup $\Gamma$ is contained in a \emph{maximal} commensurable subgroup of a very specific shape.

\begin{thm}[{\cite[Proposition 4.4 (iii)]{Borel}}]\label{ThmBorel}
	Let $\Gamma$ be a commensurable subgroup. Then $\Gamma$ is conjugate to a subgroup of
$\Gamma^*_0(N)$ for some square-free integer $N$. Here $\Gamma^*_0(N)$ is the group generated by $\Gamma_0(N)$ and all the Atkin--Lehner involutions $w_d$ with $d$ dividing $N.$
\end{thm}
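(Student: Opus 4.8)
The plan is to prove the theorem by first locating $\Gamma$ inside the commensurator of $\SL_2(\Z)$ and then classifying the maximal discrete subgroups of that commensurator prime by prime. The opening step is the standard fact that the commensurator $\mathrm{Comm}_{\GL_2^+(\R)}(\SL_2(\Z))$, taken modulo scalars, is exactly $\mathrm{PGL}_2^+(\Q)$: if $g \in \GL_2^+(\R)$ conjugates a finite-index subgroup of $\SL_2(\Z)$ into a group commensurable with $\SL_2(\Z)$, then testing $g$ against a pair of generating unipotents forces the entries of $g$ to be rational up to a common scalar. Any commensurable subgroup $\Gamma$ lies in this commensurator, since for each $\gamma \in \Gamma$ the group $\gamma(\Gamma \cap \SL_2(\Z))\gamma^{-1}$ has finite index in $\Gamma$ and hence is commensurable with $\SL_2(\Z)$, so $\gamma \SL_2(\Z)\gamma^{-1}$ is as well. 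Thus, after scaling representatives, I may assume $\Gamma \subseteq \mathrm{PGL}_2^+(\Q)$, with $\Gamma$ discrete by hypothesis.

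Next I would analyze $\Gamma$ prime by prime using the Bruhat--Tits tree $T_p$ of $\mathrm{PGL}_2(\Q_p)$, the $(p+1)$-regular tree on which $\mathrm{PGL}_2(\Q_p)$ acts with $\mathrm{PGL}_2(\Z_p)$ the stabilizer of the base vertex. Since $\Gamma$ is discrete and commensurable with $\SL_2(\Z)$, its closure $\overline{\Gamma}_p$ in $\mathrm{PGL}_2(\Q_p)$ is compact, contained in $\mathrm{PGL}_2(\Z_p)$ for all but finitely many $p$, and contained in a maximal compact subgroup $K_p$ in general. The crucial input is the classification of maximal compact subgroups of $\mathrm{PGL}_2(\Q_p)$ up to conjugacy: there are exactly two types, the vertex stabilizers (conjugates of $\mathrm{PGL}_2(\Z_p)$) and the edge-midpoint stabilizers, the latter generated by an Iwahori subgroup together with the involution swapping the two ends of an edge. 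I would identify that edge-flip involution at $p$ with the local component of the Atkin--Lehner involution $w_p = \smatrix 0{-1}p0$.

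I would then assemble the local data into the global group $\widehat{\Gamma} = \mathrm{PGL}_2^+(\Q) \cap \prod_p K_p \supseteq \Gamma$ and conjugate by a single element of $\GL_2^+(\Q)$ (available by strong approximation for $\SL_2$, or by a direct manipulation of rational matrices) to put every $K_p$ in standard position: $\mathrm{PGL}_2(\Z_p)$ at the good primes and either a vertex or the standard edge-midpoint stabilizer at the bad ones. A bad prime carrying a vertex stabilizer becomes good after this conjugation, so the only primes that survive are those where $K_p$ is the edge-midpoint stabilizer generated by the Iwahori subgroup and $w_p$. This is precisely where squarefreeness is forced: an edge is a geodesic segment of length one, so each surviving prime contributes exactly one factor to the level, whereas a local condition of the form $p^2 \mid c$ stabilizes a longer segment and is contained, after conjugation, in one of the two maximal types without enlarging the level. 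With $N$ the product of the surviving primes, the adelic description of $\widehat{\Gamma}$ matches the group cut out by $\Gamma_0(N)$ together with all $w_p$ for $p \mid N$; since $w_d = \prod_{p \mid d} w_p$ modulo $\Gamma_0(N)$, this group is exactly $\Gamma^*_0(N)$, and $\Gamma \subseteq \widehat{\Gamma}$ is conjugate into it.

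I expect the main obstacle to be the global step: passing from the purely local maximal-compact data to a single rational conjugation that simultaneously normalizes all the $K_p$ and realizes the result as the honest matrix group $\Gamma^*_0(N)$. This requires strong approximation (or an equivalent elementary rational argument) together with the Atkin--Lehner computation identifying $\langle \Gamma_0(N), \{w_d : d \mid N\}\rangle$ with the normalizer of $\Gamma_0(N)$ when $N$ is squarefree. The commensurator computation and the local classification are comparatively routine; the real care lies in verifying that no higher prime powers can appear in a maximal group, which is the squarefreeness claim and follows from the fact that $T_p$ has only two conjugacy classes of maximal compacts.
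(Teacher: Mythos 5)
The paper does not prove this statement at all: it quotes it directly from \cite[Proposition 4.4 (iii)]{Borel}, and your outline --- placing $\Gamma$ in the commensurator $\mathrm{PGL}_2^+(\Q)$ of $\SL_2(\Z)$, classifying maximal compact subgroups of $\mathrm{PGL}_2(\Q_p)$ via the Bruhat--Tits tree as vertex stabilizers and edge-midpoint stabilizers (the latter containing the local Atkin--Lehner flip $w_p$), and then conjugating by a single element of $\GL_2^+(\Q)$ to put all local data in standard position, yielding $\Gamma_0^*(N)$ with $N$ squarefree --- is essentially Borel's own argument. Your sketch is correct as a plan, including the decisive point that squarefreeness follows because the pointwise stabilizer of a segment of length at least two fixes its middle vertex and so lies inside a vertex-type maximal compact, leaving only the two conjugacy classes of maximal compacts and hence at most one Atkin--Lehner edge per prime.
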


\begin{remark}
The choice of a square-free $N$ in Theorem~\ref{ThmBorel} for a group $\Gamma$ is not necessarily unique; for instance, $\Gamma_0(p)$
is contained in both $\Gamma_0^*(p)$ and $\SL_2(\Z)$. Additionally, if $\Gamma$ is a congruence subgroup, then $N$ will always divide the level of $\Gamma$, but we may not always be able to take it to be the largest square-free divisor of the level; for instance, $\Gamma=\langle \Gamma_0(p^2),w_{p^2}\rangle$ is contained in $\smatrix 100p\SL_2(\Z)\smatrix 100p^{-1}$ but not in any conjugate of $\Gamma_0^*(p).$
\end{remark}

Given a commensurable subgroup $\Gamma$, the modular curve $Y(\Gamma)$ is defined by $ Y(\Gamma)= \Gamma\backslash\H$. The compactified modular curve $X(\Gamma)$ is a compact Riemann surface obtained by adding a finite collection of points $\cusps \Gamma$, the cusps of $\Gamma$, to $Y(\Gamma)$. Let $\Q^* = \Q \cup \set \infty$ and $\H^* = \H \cup \Q^*$. We define
\begin{align*}
	X(\Gamma) &= \Gamma\backslash\H^* \quad \quad \text{and} \quad \quad
\cusps \Gamma = \Gamma\backslash \Q^*.
\end{align*}
Thus $X(\Gamma) = Y(\Gamma) \sqcup \cusps \Gamma$.

Fix $\widehat \Gamma$ a maximal commensurable subgroup over $\Gamma$; there is a rational map
from $X(\Gamma)$ to $X(\widehat \Gamma)$ given by $\Gamma z \mapsto \widehat{\Gamma} z$. Since $\widehat \Gamma$ is conjugate to $\Gamma^*_0(N)$ with $N$ square-free, $X(\widehat \Gamma)$ has exactly one cusp. Therefore, given any cusp $\cusp \rho$ of $\Gamma$, there is is some $\gamma_\rho\in \widehat \Gamma$ so that $\cusp {\gamma_\rho \infty} = \cusp \rho.$ As a notational convenience, for each $\cusp \rho \in \cusps \Gamma$ we fix $\gamma_\rho \in \widehat{\Gamma}$ with $\cusp{\gamma_\rho \infty} = \cusp \rho$.

\subsection{Expansions at cusps}\label{Sec:Expansions}

Let $\Gamma$ be a commensurable subgroup contained in a maximal commensurable subgroup $\widehat \Gamma$.  Let $k$ be an integer or half integer, and let $\nu$ be a consistent finite weight $k$ multiplier for $\Gamma$.  The group $\Gamma$ contains a maximal subgroup $\Gamma_\infty$ of upper triangular matrices.   Since $\Gamma$ is commensurable with $\SL_2(\bbZ)$, the intersection $\Gamma \cap \SL_2(\bbZ)$ is of finite index in $\Gamma$.  Therefore, given an upper triangular matrix $\gamma \in \Gamma_\infty$, we must have $\gamma^n \in \Gamma \cap \SL_2(\bbZ)$ for some integer $n$.  This forces the diagonal entries to be real roots of unity, and we conclude that every element of $\Gamma_\infty$ is of the form $\pm \smatrix 1r01$.  Thus $\Gamma_\infty$ is a finitely generated abelian subgroup of $\SL_2(\bbR)$ of rank 1.  But as $\Gamma_\infty$ is discrete, the homomorphism $\projection : \Gamma_\infty \to \bbQ$ given by $\projection : \pm \smatrix 1r01 \mapsto r$ satisfies $\projection\parent{\Gamma_\infty} = w \bbZ$ for some positive rational number $w$. We set $w(\nu) = w$, and say $w(\nu)$ is the \emph{width} of the cusp $\cusp \infty$. Fix $T_\Gamma \in \projection\inv(w)$, so $T_\Gamma = \pm \smatrix 1w01$; for simplicity, we choose $T_\Gamma$ to be $\smatrix 1w01$ where possible. Since the image of $\nu$ is finite,
$\nu \parent{T_\Gamma} = \e^{2\pi \i \xi}$ for some rational $\xi$.
In consequence, if $f$ is holomorphic on $\H$ and is modular for $\Gamma$ and $\nu$ of weight $k$, then $f$
satisfies
\[f(z)|_{k,\nu}T_\Gamma = \begin{cases} e^{-2\pi \i \xi} f(z+w) = f(z) &\textrm{ if } T_\Gamma = \smatrix 1w01, \\
e^{-2 \pi \i \xi} \i^{-2k} f(z+w) = f(z) &\textrm{ if } T_\Gamma = \smatrix{-1}{-w}{0}{-1}.\end{cases}\]
Define $\varsigma = \varsigma(\nu)$ to be the rational number satisfying $0 \leq \varsigma < 1$ such that for any $f$ that is holomorphic on $\H$ and modular of weight $k$ for $\Gamma$ and $\nu$, we have
\[f(z+w) = e^{2\pi \i \varsigma}f(z),\]
so that $f(z)$ has a Fourier expansion of the form
\begin{equation*}
	f(z) = \sum_{n\in \Z}a\parent{\frac{n+\varsigma}{w}}q^{\frac{n+\varsigma}{w}}.
\end{equation*}

In many cases, $\varsigma \equiv \xi \pmod{1}$.  However, when $T_\Gamma = \smatrix{-1}{-w}{0}{-1}$, the matrix $\smatrix 1w01$ and the negative identity matrix are not in the group $\Gamma$, and  we say that $\infty$ is an \emph{irregular cusp}, as seen in~\cite[pp. 74--75 and errata]{DS} for the odd weight case.  In this case, $\varsigma \equiv \xi + k/2 \pmod{1}$.

We can rewrite the summation of the exponents in terms of a congruence condition as follows.  If $\varsigma = \frac st$ in lowest terms, so that $s = s(\nu)$ and $t = t(\nu)$ are coprime integers with $t > 0$, then we define the set
\begin{equation}\label{Equation: DefZnu}
	\Znu \nu = \set{\frac{m}{wt} \ \ : \ \ \ m\in \Z, \ m\equiv s \pmod{t}}
\end{equation}
so that
\begin{equation}\label{Equation: ExpInf}
	f(z) = \sum_{n\in \Znu \nu}a(n)q^n.
\end{equation}
We refer to \eqref{Equation: ExpInf} as the Fourier expansion of $f$ at the cusp $\cusp \infty$; note that the difference between adjacent elements in $\Znu{\nu}$ is the width $w$. Since $\overline{\nu}(T_\Gamma) = \e^{-2\pi \i \varsigma}$, we deduce that $\Znu {\conj\nu}=-\Znu \nu$.

We can also expand $f$ about any other cusp; however, the expansion may not be canonical. If $\alpha \in \widehat \Gamma$ and $f$ is as above, then the function $f^\alpha(z) = f(z)|_{k}\alpha$ is modular for the group
\begin{equation}\label{Equation: Gamma^alpha}
	\Gamma^\alpha = \alpha^{-1}\Gamma\alpha
\end{equation}
with multiplier $\nu^\alpha$ defined so that $\nu^\alpha(\alpha^{-1}\gamma\alpha)=\nu(\gamma).$ We note in passing that $\parent{\nu^{\alpha}}^\beta = \nu^{\alpha \beta}$. Since $\Gamma^\alpha$ is still commensurable and $\nu^\alpha$ has finite image, we have that
$f^\alpha(z)$ has a Fourier expansion of the form
\begin{equation} \label{Equation: SmoothFourier}
	f^\alpha(z) = \sum\limits_{n \in \Znu {\nu^\alpha}} a^\alpha(n) q^{n}.
\end{equation}
This is an expansion of $f$ about the cusp $\cusp {\alpha \infty}.$ If $\beta \in \widehat \Gamma$ is any other choice of matrix with $\cusp{\alpha \infty} = \cusp{\beta \infty},$ then $\alpha = \errormatrix \beta T$ for some $\errormatrix \in \Gamma$ and some $T \in \widehat \Gamma$ an upper-triangular matrix. Thus
\[
	f^\alpha = \nu(\errormatrix) f^\beta|_k T.
\]
Since $T\in \widehat \Gamma$, which is a commensurable subgroup, $T$ must have the form $\pm \smatrix1r01$ with $r$ rational, and so the action of the slash operator simply sends $z \mapsto z+r$  and $q^n \mapsto e^{2 \pi \i r}q^n$. In particular, the indexing set $\Znu {\nu^\alpha}$ depends only on the cusp $\cusp {\alpha \infty}$, and not the specific choice of $\alpha$.

We therefore define
\begin{align*}
	\Zrho \nu \rho = \Znu {\nu^{\gamma_\rho}}.
\end{align*}
We also define $\cuspwidth \Gamma \rho = w\parent{\nu^{\gamma_\rho}}$, and say $\cuspwidth \Gamma \rho$ is the \emph{width} of the cusp $\cusp \rho$; by definition, $\cuspwidth \Gamma \rho$ is the unique positive rational number $w$ such that $\projection\parent{\Gamma_\infty^{\gamma_\rho}}$ is generated by $w$. These definitions are independent of our choice of $\gamma_\rho \in \widehat\Gamma$, but they do depend on $\widehat \Gamma$. For instance, if $\Gamma=\Gamma_0(p)$ for some prime $p$, them we may choose $\widehat \Gamma$ to be $\SL_2(\Z)$ or $\Gamma^*_0(p)$. In the former case, $\cuspwidth \Gamma 0=p$, and we may take $\gamma_0=\left(\begin{smallmatrix} 0&-1\\1&0\end{smallmatrix}\right)$. In the latter case, $\cuspwidth \Gamma 0=1$, and we may take $\gamma_0$ to be the Fricke involution $\gamma_0=\left(\begin{smallmatrix} 0&-1\\p&0\end{smallmatrix}\right)$.

The lemma below follows immediately from the discussion above.
\begin{lemma}\label{Lemmamatricesforcusp}
	Suppose $f(z)$ is weakly holomorphic and modular for $\Gamma$ and $\nu$ of some weight, and assume the notation of \eqref{Equation: SmoothFourier}. If $\alpha, \beta \in \widehat{\Gamma}$ with $\cusp {\alpha \infty} = \cusp {\beta \infty}$, then $\alpha = \errormatrix \beta T$ for some $\errormatrix \in \Gamma$ and some  $T = \pm \smatrix 1r01 \in \widehat{\Gamma}$. Moreover, for all $n \in \Zrho \nu \rho$ we have
	\[
		a^{\alpha}(n) = \nu(\errormatrix) \e^{2 \pi \i r n} a^{\beta}\parent{n}.
	\]
\end{lemma}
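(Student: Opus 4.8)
The plan is to follow the computation already laid out in the paragraphs preceding \eqref{Equation: SmoothFourier}, organizing it into two halves: first producing the decomposition $\alpha = \errormatrix\beta T$, and then reading the coefficient identity off the resulting relation between $f^\alpha$ and $f^\beta$. The lemma is phrased as an ``immediate'' consequence precisely because these two halves are exactly the content of that discussion.

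First I would establish the decomposition. The equality $\cusp{\alpha\infty} = \cusp{\beta\infty}$ means precisely that $\alpha\infty$ and $\beta\infty$ lie in a common $\Gamma$-orbit, so there is some $\errormatrix\in\Gamma$ with $\errormatrix\beta\infty = \alpha\infty$. Setting $T = \parent{\errormatrix\beta}^{-1}\alpha = \beta^{-1}\errormatrix^{-1}\alpha$, we get $T\infty = \infty$, so $T$ is upper triangular, and $T\in\widehat\Gamma$ since $\alpha,\beta\in\widehat\Gamma$ and $\Gamma\subseteq\widehat\Gamma$. Because $\widehat\Gamma$ is itself a commensurable subgroup, the same reasoning used for $\Gamma_\infty$ in Section~\ref{Sec:Expansions} shows that every upper triangular element of $\widehat\Gamma$ has the shape $\pm\smatrix1r01$ with $r$ rational; hence $T = \pm\smatrix1r01$ and $\alpha = \errormatrix\beta T$, as claimed.

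Next I would apply the slash operator to this matrix identity. Since $f$ is modular of weight $k$ for $\Gamma$ and $\nu$, we have $f|_k\errormatrix = \nu(\errormatrix)f$, and the slash operator is multiplicative up to the metaplectic cocycle; passing to the genuine group action on $\mathfrak G_k$ makes the composition exact, so that $f^\alpha = f|_k\alpha = f|_k\parent{\errormatrix\beta T}$ rewrites as $f^\alpha = \nu(\errormatrix)\,f^\beta|_k T$, which is the displayed identity in the discussion above. Finally, with $T = \pm\smatrix1r01$ the slash acts on the expansion $f^\beta(z) = \sum_{n\in\Zrho\nu\rho}a^\beta(n)q^n$ by $z\mapsto z+r$, sending $q^n\mapsto\e^{2\pi\i rn}q^n$; this is exactly why the indexing sets $\Znu{\nu^\alpha}$ and $\Znu{\nu^\beta}$ both equal $\Zrho\nu\rho$. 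Comparing the coefficient of $q^n$ on the two sides of $f^\alpha = \nu(\errormatrix)f^\beta|_k T$ then yields $a^\alpha(n) = \nu(\errormatrix)\e^{2\pi\i rn}a^\beta(n)$.

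I expect the only delicate point to be the sign and branch bookkeeping hidden in the multiplicativity of the slash operator in half-integer weight, together with the factor $j(T,z)^{-k}$ contributed when the minus sign occurs in $T=\smatrix{-1}{-r}{0}{-1}$ (the irregular-cusp situation). These are precisely the phases controlled by the consistency of $\nu$ and by the normalization of $\varsigma$ and the width in Section~\ref{Sec:Expansions}, so the work is to confirm that they are absorbed into the stated factor $\e^{2\pi\i rn}$ rather than producing a spurious extra constant. Everything else is the formal manipulation already carried out in the text.
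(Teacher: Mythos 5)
Your proposal is correct and takes essentially the same route as the paper, which proves this lemma simply by declaring it an immediate consequence of the discussion preceding \eqref{Equation: SmoothFourier}: that discussion contains exactly your two steps, namely the decomposition $\alpha = \errormatrix \beta T$ with $T = \pm\smatrix 1r01$, $r \in \Q$, forced by commensurability of $\widehat\Gamma$, followed by the identity $f^\alpha = \nu(\errormatrix)\, f^\beta|_k T$ and the observation that slashing by $T$ sends $q^n \mapsto \e^{2\pi\i r n} q^n$. The half-integral-weight sign and branch bookkeeping you flag as the delicate point is likewise left implicit in the paper (it is controlled by working through the genuine action of $\mathfrak{G}_k$ and the normalization of $\varsigma$ at irregular cusps), so your write-up is, if anything, slightly more explicit about that issue than the original.
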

Thus, although the indexing set $\Zrho \nu \rho$ depends only on the cusp $\cusp \rho$, the coefficients $a^\alpha(n)$ and $a^\beta(n)$ may differ by roots of unity.

\section{The modular pairing}\label{Pairing}

Bruinier and Funke~\cite[Eq. (3.9)]{Bruinier-Funke} defined a pairing between real-analytic modular forms of dual weights and conjugate multipliers in terms of the Petersson inner product. If the singularities of the two forms are of meromorphic type, then Proposition 3.5 of~\cite{Bruinier-Funke} shows that the pairing is the sum of residues of the product of the two forms (including the residues at the cusps). For instance, if $f\in M^!_k(\Gamma,\nu)$ and $g\in M^!_{2-k}(\Gamma,\overline \nu)$ have Fourier expansions of the form
\begin{align*}
	f^\lambda(z) = \sum\limits_{n \in \Zrho \nu {\lambda \infty}} a^\lambda(n) q^n \ \ \text{and} \ \ g^\lambda(z) = \sum\limits_{m \in \Zrho \nu {\lambda \infty} } b^\lambda(m) q^m,
\end{align*}
for $\lambda \in \widehat{\Gamma}$, then the pairing $\{f,g\}_\Gamma$ is given as a sum over the set of cusps $\Omega(\Gamma)$ by
\begin{align*}
	\{f,g\}_{\Gamma}=\sum_{\rho\in \Omega(\Gamma)}\frac{\cuspwidth \Gamma \rho}{\widehat {w}} \sum_{\ell \in \Q} a ^{\gamma_\rho}(\ell) b^{\gamma_\rho}(-\ell),
\end{align*}
where $\widehat w$ is the width of the (unique) cusp of $\widehat \Gamma$.
By Lemma \ref{Lemmamatricesforcusp}, $\{f,g\}_{\Gamma}$ is independent of our choice of $\gamma_\rho$; in Bruinier's and Funke's work, $\{f,g\}_{\Gamma}$ is also independent of our choice of $\widehat{\Gamma}$.
The form $f\cdot g$ is a weight $2$ modular form for $\Gamma$ with trivial multiplier, and therefore defines a meromorphic differential on $X(\Gamma)$. It is well-known that the sum of residues of a meromorphic differential on a compact Riemann surface is $0$, and so we have the following theorem.

\begin{theorem}[See Proposition 3.5 of~\cite{Bruinier-Funke} and the observation following the proof]\label{ThmPairing}
Let $\Gamma $ be a commensurable subgroup, let $k\in \frac 1 2 \Z,$ and let $\nu$ be any consistent finite weight $k$ multiplier for $\Gamma$. Suppose $f\in M^!_k(\Gamma,\nu)$ and $g\in M^!_{2-k}(\Gamma,\overline \nu)$. Then
\[
	\{f,g\}_\Gamma = 0.
\]
\end{theorem}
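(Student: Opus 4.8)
The plan is to realize the pairing $\{f,g\}_\Gamma$ as the total sum of residues of a single meromorphic differential on the compact Riemann surface $X(\Gamma)$, and then to invoke the residue theorem. First I would record that, since $\nu$ is a finite multiplier, $\abs{\nu(\gamma)} = 1$ for every $\gamma \in \Gamma$, so that $\nu(\gamma)\overline\nu(\gamma) = \abs{\nu(\gamma)}^2 = 1$; combined with the pointwise identity $j(\gamma,z)^{k}\, j(\gamma,z)^{2-k} = j(\gamma,z)^{2}$ (both powers share the base $j(\gamma,z)$, so the principal branches multiply without any sign discrepancy), this shows via Proposition~\ref{nuProp} that $h = f\cdot g$ is modular of weight $2 = k + (2-k)$ for $\Gamma$ with trivial multiplier. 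Consequently the differential $\omega = h(z)\,dz$ is $\Gamma$-invariant on $\H$ and descends to a meromorphic differential on $X(\Gamma)$; it is meromorphic at the cusps because $f$ and $g$ are weakly holomorphic, hence at worst meromorphic there.

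Next I would locate the residues of $\omega$. Since $f$ and $g$ are holomorphic on $\H$, the differential $\omega$ is holomorphic on $Y(\Gamma)$ away from the cusps, so the only candidate poles on $X(\Gamma)$ are the cusps, provided I also check that the elliptic fixed points contribute nothing. Near such a point of order $n$ the local uniformizer on $X(\Gamma)$ is an $n$th power $t = w^{n}$ of a rotation-invariant coordinate $w$; invariance of $\omega = \psi(w)\,dw$ under $w \mapsto \e^{2\pi\i/n}w$ forces $\psi$ to be supported on exponents $\equiv -1 \pmod n$, and rewriting $\omega$ in the coordinate $t$ then shows it is holomorphic at the elliptic point, with residue $0$. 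Thus $\sum_{P\in X(\Gamma)}\operatorname{Res}_{P}\omega$ reduces to a sum over the cusps $\cusps\Gamma$.

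The key computation is the residue at a cusp $\cusp\rho$. Pulling back by $\gamma_\rho$ and using that weight-$2$ slashing is exactly pullback of differentials (so $(f g)^{\gamma_\rho} = f^{\gamma_\rho} g^{\gamma_\rho}$ records $\omega$ near the cusp), I would pass to the expansion at $\infty$ for $\Gamma^{\gamma_\rho}$; with local parameter $q = \e^{2\pi\i z/\cuspwidth\Gamma\rho}$ one has $dz = \frac{\cuspwidth\Gamma\rho}{2\pi\i}\frac{dq}{q}$, so that
\[
	\operatorname{Res}_{\rho}\omega = \frac{\cuspwidth\Gamma\rho}{2\pi\i}\sum_{\ell\in\Q} a^{\gamma_\rho}(\ell)\,b^{\gamma_\rho}(-\ell),
\]
the inner sum being precisely the constant term of $f^{\gamma_\rho} g^{\gamma_\rho}$. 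Summing over $\cusp\rho\in\cusps\Gamma$ and applying the residue theorem gives $\sum_\rho \operatorname{Res}_\rho \omega = 0$; comparing with the defining formula for the pairing shows $\{f,g\}_\Gamma = \frac{2\pi\i}{\widehat w}\sum_\rho \operatorname{Res}_\rho\omega = 0$, where $\widehat w$ is the width of the cusp of $\widehat\Gamma$, a fixed nonzero constant that factors out.

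The step I expect to be the main obstacle is the clean identification of each cuspidal residue with the corresponding pairing term, including the width factor $\cuspwidth\Gamma\rho/\widehat w$: this requires tracking the change of variable $z\mapsto q$ carefully (so that the factor $\cuspwidth\Gamma\rho$ emerges correctly, even at irregular cusps) and confirming via Lemma~\ref{Lemmamatricesforcusp} that the constant term $\sum_\ell a^{\gamma_\rho}(\ell) b^{\gamma_\rho}(-\ell)$ is independent of the chosen representative $\gamma_\rho$, since individual coefficients differ by roots of unity. The elliptic-point vanishing, while routine, also demands care to ensure that no spurious residues are dropped from the global sum. Once these local contributions are pinned down, the result is immediate from the fact that the sum of residues of a meromorphic differential on a compact Riemann surface is zero (cf.\ Proposition 3.5 of~\cite{Bruinier-Funke}).
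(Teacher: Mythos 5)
Your proof is correct and takes essentially the same route as the paper: the paper likewise notes that $f \cdot g$ is a weight $2$ modular form for $\Gamma$ with trivial multiplier, hence defines a meromorphic differential on the compact Riemann surface $X(\Gamma)$ whose residues sum to zero, with the identification of $\{f,g\}_\Gamma$ as that sum of residues delegated to Proposition 3.5 of~\cite{Bruinier-Funke}. The local details you supply --- the vanishing contribution at elliptic points, the cuspidal residue computation producing the factor $\cuspwidth{\Gamma}{\rho}/\widehat{w}$, and the independence of the choice of $\gamma_\rho$ via Lemma~\ref{Lemmamatricesforcusp} --- are exactly the checks the paper leaves to the cited reference, and they are carried out correctly.
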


If $g$ is made to range over all holomorphic modular forms in $M_{2-k}^!(\Gamma, \overline{\nu})$, this theorem constrains the possible principal parts that $f$ may have at cusps. The Bruinier--Funke pairing is essentially an analytic interpretation of a formal pairing defined earlier by Borcherds~\cite{Borcherds}. Using Serre duality and this formal version of the pairing, Borcherds proved that these constraints, including the implied constraints given by \eqref{Equation: SmoothFourier}, are the \emph{only} constraints on the principal parts (see Theorem \ref{ThmFunkePairingFormExistence} below).

Suppose $\Gamma$, $\widehat\Gamma $ and $\nu$ are as above, with $w$ and $t$ as in \eqref{Equation: DefZnu}. Let $\C((q))_\nu$ denote the subspace of the field of formal Laurent series $\C((q^{\frac{1}{wt}}))$ consisting of elements $\mathrm h(q)$ of the form
\[
	\mathrm h(q) = \sum_{\substack{n\in \Znu\nu \\n\gg-\infty}} a(n)q^n,
\]
and let
\[
	\C((q))_{\widehat\Gamma,\nu} = \prod^*_{\indexmatrix \in \widehat\Gamma} \C((q))_{\nu^\indexmatrix},
\]
where the direct product is restricted as follows: If $\mathbf h=(\mathrm h^\indexmatrix)_{\indexmatrix \in \widehat\Gamma}\in \C((q))_{\widehat\Gamma,\nu}$ with
\[
	\mathrm h^\indexmatrix (q)= \sum\limits_{\substack{n\in \Zrho \nu {\indexmatrix \infty} \\n\gg-\infty}} a^\indexmatrix(n)q^n,
\]
then whenever $\indexmatrix = \pm \errormatrix \indexmatrix\prm \smatrix1r01$ with $\errormatrix \in \Gamma$ and $r\in \Q$, we have $a^{\indexmatrix}(n) = \nu(\errormatrix) \e^{2\pi \i r n} a^{\indexmatrix\prm}(n).$ This implies that $\mathbf h$ is uniquely determined by a set of components corresponding to any complete set of coset representatives of $\Gamma\backslash \widehat \Gamma\slash \widehat{\Gamma}_\infty,$ which is finite. We regard $\bbC((q))_{\widehat\Gamma, \nu}$ as a formal space of ``candidate'' weakly holomorphic modular forms. By Lemma \ref{Lemmamatricesforcusp}, if ${f\in M^!_k(\Gamma,\nu)}$, then the modular form $f$ has an image in this space given by $f \mapsto (f^\indexmatrix)_{\indexmatrix \in \widehat\Gamma}\in \C((q))_{\widehat\Gamma,\nu},$ and we view $M_k^!(\Gamma, \nu)$ as a subspace of $\C((q))_{\widehat\Gamma,\nu}$ via this embedding. Theorem \ref{ThmFunkePairingFormExistence} below enables us to construct weakly holomorphic modular forms by performing linear algebra on series in $\bbC((q))_{\widehat\Gamma, \nu}$.

Following Borcherds, we define the formal pairing $\{~,~\}_\Gamma :\C((q))_{\widehat\Gamma,\nu}\times \C((q))_{\widehat\Gamma,\overline\nu}\to \C$ by
\begin{align}\label{Equation: Formal modular pairing}
	\{(\mathrm f^\indexmatrix),(\mathrm g^\indexmatrix)\}_\Gamma &= \sum_{\indexmatrix \in \Gamma\backslash \widehat\Gamma}\sum_{~\ell\in \Zrho \nu {\indexmatrix \infty}} a^{\indexmatrix}(\ell)b^{\indexmatrix}(-\ell)=\sum_{\rho\in \Omega(\Gamma)}\frac{\cuspwidth \Gamma \rho}{\widehat w} \sum_{\ell\in \Q} a ^{\gamma_\rho}(\ell)b^{\gamma_\rho}(-\ell)
\end{align}
where
\[
	\mathrm f^\indexmatrix (q)= \sum_{n \in \Zrho \nu {\indexmatrix \infty}} a^\indexmatrix(n) q^n \ \ \text{and} \ \ \mathrm g^\indexmatrix(q) = \sum_{m \in \Zrho \nu {\indexmatrix \infty}}b^\indexmatrix(m) q^m.
\]
Note that the  ratio $\frac{\cuspwidth \Gamma {\rho}}{\widehat w}$ in the right hand side of  \eqref{Equation: Formal modular pairing} appears as we change the set of summation from $\Gamma\backslash \widehat\Gamma$ to $\cusps \Gamma\simeq \Gamma\backslash \widehat\Gamma/\widehat{\Gamma}_\infty.$ If $\lambda$ is any element of $\widehat \Gamma$ with $\lambda\infty=\rho,$  then $\frac{\cuspwidth \Gamma {\rho}}{\widehat w}$ is the number of cosets of $\Gamma\backslash\widehat\Gamma$ contained in $\Gamma\lambda \widehat\Gamma_\infty$.

Naturally, Borcherds's formal pairing depends on $\widehat{\Gamma}$, but it remains independent of our choice of $\gamma_\rho$. Borcherds proved  a vector valued version of the following theorem about this pairing.
\begin{theorem}[see Theorem 3.1 of~\cite{Borcherds}]\label{ThmFunkePairingFormExistence}
 Let $\Gamma $ be a commensurable subgroup, and let $\widehat{\Gamma}$ be a maximal commensurable subgroup containing $\Gamma$. Let $k\in \frac 1 2 \Z,$ and let $\nu$ be any consistent finite weight $k$ multiplier.
Suppose $\mathbf f = \parent{\mathrm f^\indexmatrix}_\indexmatrix \in \C((q))_{\widehat\Gamma,\nu}.$ Then the following are equivalent:
\begin{enumerate}
	\item For every holomorphic modular form $g \in M_{2-k}(\Gamma,\overline\nu)$, we have $\set{\mathbf{f}, g}_\Gamma = 0.$
	\item There exists $f\in M^!_{k}(\Gamma, \nu)$ such that for each $\indexmatrix \in \widehat \Gamma$, we have $f^\indexmatrix = \mathrm f^\indexmatrix + o(1).$
\end{enumerate}
\end{theorem}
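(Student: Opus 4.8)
The plan is to prove the two implications separately. The implication (2) $\Rightarrow$ (1) is a quick consequence of Theorem~\ref{ThmPairing}, while the content lies in (1) $\Rightarrow$ (2), which I would obtain by realizing weakly holomorphic forms as meromorphic sections of a line bundle on the compact modular curve $X(\Gamma)$ and applying Serre duality. The geometric inputs are standard: weight $k$ forms for $(\Gamma,\nu)$ holomorphic on $Y(\Gamma)$ with poles at the cusps are the meromorphic sections of a line bundle $\mathcal{L}_k$ on $X(\Gamma)$; holomorphic forms give $M_{2-k}(\Gamma,\overline\nu) = H^0(X(\Gamma),\mathcal{L}_{2-k})$; and $\mathcal{L}_k \otimes \mathcal{L}_{2-k} \cong \Omega^1_X(C)$, the sheaf of differentials with at worst simple poles along the cusp divisor $C$. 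This last identification is simply the observation, already used in Theorem~\ref{ThmPairing}, that the product of a weight $k$ and a weight $2-k$ form is a weight $2$ form with trivial multiplier, hence (after multiplying by $dz$) a meromorphic differential on $X(\Gamma)$. Since the pairing $\{\cdot,\cdot\}_\Gamma$ collapses to a finite sum over $\Omega(\Gamma)$, all of the analysis takes place on $X(\Gamma)$, the product over $\widehat\Gamma$ serving only to record the expansions at every cusp at once.

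For (2) $\Rightarrow$ (1), suppose $f \in M^!_k(\Gamma,\nu)$ satisfies $f^\lambda = \mathrm{f}^\lambda + o(1)$ for all $\lambda \in \widehat\Gamma$, and fix $g \in M_{2-k}(\Gamma,\overline\nu)$. Theorem~\ref{ThmPairing} gives $\{f,g\}_\Gamma = 0$. As $g$ is holomorphic, each $g^{\gamma_\rho}$ has only nonnegative exponents, so in the defining sum only the coefficients $a^{\gamma_\rho}(\ell)$ with $\ell \le 0$ — the principal parts together with the constant terms — survive. The hypothesis $f^\lambda - \mathrm{f}^\lambda = o(1)$ says exactly that $f^\lambda$ and $\mathrm{f}^\lambda$ agree in every exponent $\ell \le 0$, whence $\{\mathbf f,g\}_\Gamma = \{f,g\}_\Gamma = 0$. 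This is precisely the step that requires the stronger $o(1)$ rather than $O(1)$, because the constant term of $f$ is paired against that of $g$.

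For (1) $\Rightarrow$ (2) I would phrase the construction of $f$ as a Mittag--Leffler problem and read off its solvability from Serre duality. Fix a divisor $D$ supported on the cusps that bounds the poles of $\mathbf f$, and let $\mathcal{L}_k^\circ = \mathcal{L}_k(-C)$ be the sheaf of weight $k$ cusp forms. The prescribed non-positive part of $\mathbf f$ determines a section $\sigma$ of the skyscraper sheaf $\mathcal{L}_k(D)/\mathcal{L}_k^\circ$, and the cohomology sequence of $0 \to \mathcal{L}_k^\circ \to \mathcal{L}_k(D) \to \mathcal{L}_k(D)/\mathcal{L}_k^\circ \to 0$ shows that $\sigma$ lifts to a global section — that is, to a form $f \in M^!_k(\Gamma,\nu)$ with $f^\lambda = \mathrm{f}^\lambda + o(1)$ — if and only if the connecting image $\delta(\sigma) \in H^1(X,\mathcal{L}_k^\circ)$ vanishes. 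Running the cusp divisor through the identifications above gives $\Omega^1_X \otimes (\mathcal{L}_k^\circ)^{-1} \cong \mathcal{L}_{2-k}$, so Serre duality furnishes an isomorphism $H^1(X,\mathcal{L}_k^\circ) \cong H^0(X,\mathcal{L}_{2-k})^* = M_{2-k}(\Gamma,\overline\nu)^*$ under which $\delta(\sigma)$ is carried to the functional $g \mapsto \{\mathbf f,g\}_\Gamma$: the Serre pairing is the sum over the cusps of the residues of the meromorphic differential $\mathbf f\, g\, dz$, and at a cusp $\rho$ this residue reproduces, up to the overall normalization, the inner sum $\sum_\ell a^{\gamma_\rho}(\ell) b^{\gamma_\rho}(-\ell)$ weighted by the width factor coming from $dz = \tfrac{w_\rho}{2\pi \i}\,\tfrac{dq_\rho}{q_\rho}$. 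Hence $\delta(\sigma) = 0$ is exactly hypothesis (1), and the required $f$ exists.

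The main obstacle is not the cohomological formalism, which is routine once the setup is in place, but the construction of $\mathcal{L}_k$ and the verification that the abstract Serre pairing coincides — normalization included — with the explicit pairing $\{\cdot,\cdot\}_\Gamma$. For half-integral $k$ and a nontrivial finite multiplier $\nu$, $\mathcal{L}_k$ is not an honest line bundle on $X(\Gamma)$ but an orbifold, or $\mathbb{Q}$-divisor, line bundle carrying fractional twists at the elliptic points and at irregular cusps; giving meaning to $\mathcal{L}_k(-C)$, to $\mathcal{L}_k \otimes \mathcal{L}_{2-k} \cong \Omega^1_X(C)$, and to Serre duality in this setting is the technical heart of the argument. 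I would handle this either by passing to a finite cover of $X(\Gamma)$ on which the half-integral weight and the multiplier trivialize, or — more in keeping with the cited source — by deducing the scalar statement from Borcherds's vector-valued Theorem~3.1 of~\cite{Borcherds}, viewing forms for $(\Gamma,\nu)$ as the components of a vector-valued form for $\widehat\Gamma$ so that $\{\cdot,\cdot\}_\Gamma$ matches the vector-valued residue pairing. Tracking the relation between $dz$ and $dq_\rho/q_\rho$ is what produces the width weights $w_\rho/\widehat w$ appearing in the definition of the pairing.
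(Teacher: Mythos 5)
Your proposal is correct, and its fallback option is precisely the paper's proof: the paper proves Theorem \ref{ThmFunkePairingFormExistence} in three sentences, by citing the vector-valued Theorem 3.1 of \cite{Borcherds} and supplying the dictionary that identifies a scalar form for $(\Gamma,\nu)$ with the one-dimensional case of Borcherds's representation (finiteness of $\nu$ corresponding to the representation factoring through a finite quotient of $\Gamma$). Your primary route is a genuine expansion of what lies behind that citation: the implication (2) $\Rightarrow$ (1) from Theorem \ref{ThmPairing} is exactly right, including your observation that holomorphicity of $g$ restricts the pairing to the exponents $\ell \le 0$ of $\mathbf f$, so that $o(1)$ rather than $O(1)$ is what makes the constant terms match; and your Mittag--Leffler formulation of (1) $\Rightarrow$ (2), with the obstruction $\delta(\sigma) \in H^1\parent{X, \mathcal{L}_k(-C)}$ identified via Serre duality with the functional $g \mapsto \set{\mathbf f, g}_\Gamma$ on $M_{2-k}(\Gamma, \overline{\nu})$, is essentially Borcherds's own argument transplanted to the scalar setting, down to the normalization $dz = \frac{w_\rho}{2\pi \i}\,\frac{dq_\rho}{q_\rho}$ that produces the width factors $w_\rho/\widehat{w}$ in the pairing. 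What the citation buys the paper is that the delicate points you correctly flag --- fractional twists of $\mathcal{L}_k$ at elliptic points and irregular cusps, half-integral weight --- are absorbed wholesale into Borcherds's metaplectic, vector-valued framework; what your expansion buys is transparency about why the obstruction space is $M_{2-k}(\Gamma,\overline\nu)$ and where the normalization of $\set{\cdot,\cdot}_\Gamma$ comes from. One caution on your finite-cover variant: if $\Gamma' \trianglelefteq \Gamma$ is a finite-index subgroup on which the bundle $\mathcal{L}_k$ becomes honest, hypothesis (1) for $\Gamma$ does \emph{not} yield hypothesis (1) for $\Gamma'$, since $M_{2-k}(\Gamma', \overline{\nu}')$ is strictly larger, so you cannot simply solve upstairs and descend; you must instead run the cohomological argument equivariantly, restricting to the $\nu$-isotypic components under $G = \Gamma/\Gamma'$ (using exactness of taking invariants in characteristic zero and the $G$-equivariance of Serre duality), or induct from $\Gamma'$ up to $\Gamma$ --- which produces vector-valued forms and returns you to Borcherds's setting, i.e., to the paper's proof.
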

\begin{proof}
This is the one-dimensional version of Theorem 3.1 of~\cite{Borcherds}. Here the multiplier $\nu$ is the product of the one-dimensional representation $\rho$ and the sign of $(\sqrt{cz+d})^{2k}$ arising from the action of an element of the metaplectic group. The condition that $\nu$ is finite is equivalent to the condition that $\rho$ factors through a finite quotient of $\Gamma$.
\end{proof}

A version of this theorem also follows readily from Bruinier and Funke's original work, by applying the pairing to the holomorphic parts of Maass-Poincar\'e series. Borcherds's proof relies on Serre duality and avoids harmonic Maass forms and Maass-Poincar\'e series entirely. His method can therefore be applied to circumstances, such as weight $1$, where the convergence of the Maass-Poincar\'e series is less straightforward.

\section{Row-reduced canonical bases}\label{SecDualityWHMF}

Let $I$ and $J$ be sets of rational numbers, and let $\parent{f_m}_{m}$ and $\parent{g_n}_{n}$ be sequences of weakly holomorphic modular forms indexed by $I$ and $J$. We say that $(f_m, g_n)_{m, n}$ is a \emph{modular grid} if the following conditions hold:
\begin{itemize}
	\item For each $m \in I$, we may write
	\[
	f_m(z) = R_m(q) + \sum\limits_{n \in J} a(m, n) q^n,
	\]
	where $R_m(q)$ is a Laurent polynomial in rational powers of $q$.
    \item For each $n \in J$, we may write
	\[
	g_n(z) = S_n(q) + \sum\limits_{m \in I} b(n, m) q^m,
	\]
	where $S_n(q)$ is a Laurent polynomial in rational powers of $q$.
	\item For each $m \in I$ and each $n \in J$, we have $a(m, n) = -b(n, m)$.
\end{itemize}
In some examples, the finite Laurent polynomials $R_m(q)$ and $S_n(q)$ are the principal parts of the weakly holomorphic modular forms, but this does not have to be the case.  We do not require that $I$ and $J$ consist of nonnegative indices in this general setting.

Thus, two sequences of weakly holomorphic modular forms constitute a modular grid if for almost all $m$ and $n$, the $n$th coefficient of the $m$th form in one sequence is the negative of the $m$th coefficient of the $n$th form in the other sequence. If $(f_m, g_n)_{m, n}$ is a nonzero modular grid indexed by $I$ and $J$, then $I$ and $J$ are necessarily infinite, since no nonconstant modular form has a finitely supported Fourier expansion.

In this section we construct row-reduced canonical bases for spaces of weakly holomorphic modular forms and use the modular pairing to prove Theorem~\ref{IntroThmDualitySimple}, thereby verifying that these bases form a modular grid. We begin by establishing row-reduced canonical bases $\parent{\Finf \nu k m}_{m \in \I \nu k}$ and $\parent{\Ginf \nu k m}_{m \in \J \nu k}$ for the space of holomorphic forms $M_k(\Gamma, \nu)$ and the space of cusp forms $\spcG k (\Gamma, \nu)$ respectively. The indexing sets $\I \nu k$ and $\J \nu k$ for these bases are then used to define indexing sets $\tI \nu k$ and $\tJ \nu k$ for $\finf \nu k m$ and $\ginf \nu k m$ respectively. We construct the principal parts of $\finf \nu k m$ and $\ginf \nu k m$, as elements of $\bbC((q))_{\widehat\Gamma, \nu}$, in terms of the coefficients of $\Finf \nu k m$ and $\Ginf \nu k m$. With these definitions in hand, we apply Theorem \ref{ThmFunkePairingFormExistence} together with some careful linear algebra to prove that these principal parts correspond to weakly holomorphic modular forms (Lemma \ref{Lemma: finf and ginf exist}). From here, it is easy to verify that these forms give canonical bases (Proposition \ref{Proposition: finf and ginf are canonical bases}). We conclude Section \ref{SecDualityWHMF} by proving Theorem \ref{IntroThmDualitySimple}.

\subsection{Construction of bases}\label{SubsecFormsatinf}

The spaces $M_k(\Gamma,\nu) $ and $\spcG k (\Gamma,\nu)$ have finite dimension, and so each has a basis of forms whose Fourier expansions at $\cusp \infty$ are in reduced echelon form. We denote these bases by
\begin{align*}
	\Finf \nu k m(z) &= q^m+\sum_{\substack{n \in \Znu \nu \\ m<n \not\in \I \nu k }} \Ainf \nu k mn q^n \ \ \textrm{for} \ \ m \in \I \nu k ,\\
	\Ginf \nu k n(z )&= q^n + \sum_{\substack{m \in \Znu \nu \\ n<m \not\in \J \nu k }} \Binf \nu k n m q^m \ \ \textrm{for} \ \ n \in \J \nu k .
\end{align*}
\noindent
Here, the finite sets of indices $\I \nu k \subseteq\Znu \nu _{\geq 0}$ and $\J \nu k \subseteq\Znu \nu _{> 0}$ are defined implicitly to be the indices of the reduced bases. Note that if $k<0,$ then $ \I \nu k$ and $\J\nu k$ are empty. Note also that the set of $\Ginf \nu k n$ may not be a subset of the set of $\Finf \nu k m$, if there are multiple cusps and thus multiple forms in the Eisenstein subspace of $M_k(\Gamma,\nu)$.

Define the indexing sets $\tI \nu k$ and $\tJ \nu k$ for $\parent{\finf \nu k m}_m$ and $\parent{\ginf \nu k n}_n$ by the disjoint unions
\begin{align*}
	\tI \nu k \sqcup \J {\conj \nu} {2-k}=\Znu {\conj \nu}_{>0}\sqcup \left(-\I \nu k \right), \quad \text{and }\quad
	\tJ \nu k \sqcup \I {\conj \nu} {2-k}=\Znu {\conj \nu}_{\geq 0}\sqcup \left(-\J \nu k \right).
\end{align*}
These indexing sets partition $\Znu \nu$ so that
\begin{align}
	\parent{-\tI { \nu} {k}} \sqcup \tJ {\conj\nu} {2-k} \ = \ \Znu \nu \ = \
	\tI {\conj\nu} {2-k} \sqcup \parent{-\tJ { \nu} {k}}. \label{Equation: IndexDef2}
\end{align}
While $\I \nu k$ and $\J \nu k$ index the bases $\Finf \nu k m$ and $\Ginf \nu k n$ for the spaces of holomorphic modular forms and cusp forms by the orders of their \emph{zeros} at $\infty$, the $\tI \nu k$ and $\tJ \nu k$ will be indexing sets for the orders of the \emph{poles} at $\infty$ of the canonical bases for spaces of weakly holomorphic forms.
We set $\finf \nu k m = \Finf \nu k {-m}$ for $-m\in \I \nu k$, and set $\ginf \nu k n = \Ginf \nu k {-n}$ for $-n\in \J \nu k$.

In order to define the remaining forms $\finf \nu k m$ and $\ginf \nu k n$, we begin by constructing their principal parts as elements of $\bbC((q))_{\widehat\Gamma, \nu}$. Theorem \ref{ThmFunkePairingFormExistence} will then guarantee the existence of the forms $\finf \nu k m$ and $\ginf \nu k n$ themselves. For $m \in -\Znu \nu$, let $\qinf \nu k m = \parent{\qinfpart \nu k m \indexmatrix}_\indexmatrix \in \C((q))_{\widehat\Gamma,\nu}$ be given by
\begin{align}
	\qinfpart \nu k m \indexmatrix (q) &= \begin{cases}
\nu(\errormatrix) \e^{-2\pi \i r m} q^{-m} & \ \text{if} \ \indexmatrix = \pm \errormatrix \smatrix 1r01 \ \text{with} \ \errormatrix \in \Gamma \ \text{and} \ r\in \Q, \\
	0 & \ \text{otherwise.}
	\end{cases} \nonumber \\
	\intertext{For $m \in \tI \nu k$ and $m>0$ we define $\principalfinf \nu k m = \qinf \nu k m - \finferror \nu k m$ where $\finferror \nu k m = \parent{\finferrorpart \nu k m \indexmatrix}_\indexmatrix \in \C((q))_{\widehat\Gamma,\nu}$ is given by}
	\finferrorpart \nu k m I (q) &= \sum\limits_{n \in \J {\conj \nu} {2-k}} \Binf {\conj \nu} {2-k} nm q^{-n}, \label{Defn: finferrorprincipal} \\
	\finferrorpart \nu k m \indexmatrix (q) &= \begin{cases}
	\nu(\errormatrix) \finferrorpart \nu k m I (\e^{2 \pi \i r} q) & \ \text{if} \ \indexmatrix = \pm \errormatrix \smatrix 1r01 \ \text{with} \ \mu \in \Gamma \ \text{and} \ r\in \Q, \\
	0 & \ \text{otherwise.}
	\end{cases} \nonumber \\
	\intertext{Similarly, for $n \in \tJ \nu k$ and $n\geq0$, we define $\principalginf \nu k n = \qinf \nu k n - \ginferror \nu k n$ where $\ginferror \nu k n = \parent{\ginferrorpart \nu k n \indexmatrix}_\indexmatrix \in \C((q))_{\widehat\Gamma,\nu}$ is given by}
	\ginferrorpart \nu k n I (q) &= \sum\limits_{m \in \I {\conj \nu} {2-k}} \Ainf {\conj \nu} {2-k} mn q^{-m}, \nonumber \\
	\ginferrorpart \nu k n \indexmatrix (q) &= \begin{cases}
	\nu(\errormatrix) \ginferrorpart \nu k n I (\e^{2 \pi \i r} q) & \ \text{if} \ \indexmatrix = \pm \errormatrix \smatrix 1r01 \ \text{with} \ \mu \in \Gamma \ \text{and} \ r\in \Q, \\
	0 & \ \text{otherwise.}
	\end{cases} \nonumber
\end{align}

We now consider the constant terms for $\finf \nu k m$ at the cusps of $\Gamma$ more closely. Let $\bbC_{\widehat\Gamma, \nu}$ be the subspace of $\bbC((q))_{\widehat\Gamma, \nu}$ whose components are constants.  Each element of this subspace represents a ``candidate'' for the collection of constant terms in the Fourier expansions of a modular form at each cusp. If $\gamma \in \widehat\Gamma$ and $0 \not\in \Zrho {\nu}{\gamma\infty}$, then $\finf \nu k m$ must vanish at $\cusp \gamma\infty$; likewise, every series in $\bbC((q))_{\widehat\Gamma, \nu}$ vanishes at $\gamma \infty$.

\begin{lemma}\label{Lemma: finf and ginf exist}
	Fix $m > 0$ in $\tI \nu k$ and $n \geq 0$ in $\tJ \nu k$. There exists $\mathbf{x} \in \bbC_{\widehat\Gamma, \nu}$ such that for each $h \in M_{2-k}(\Gamma, \conj \nu)$, we have \[
	\set{\principalfinf \nu k m + \mathbf x, h}_\Gamma = 0 \ \ \ \text{and} \ \ \
	\set{\principalginf \nu k n, h}_\Gamma = 0.
	\]
\end{lemma}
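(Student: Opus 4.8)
The plan is to establish the required vanishing of the formal pairing by testing \eqref{Equation: Formal modular pairing} directly against the reduced echelon bases $\parent{\Finf {\conj \nu} {2-k} {m'}}_{m'}$ of $M_{2-k}(\Gamma, \conj \nu)$ and $\parent{\Ginf {\conj \nu} {2-k} {n'}}_{n'}$ of $\spcG {2-k}(\Gamma, \conj \nu)$ constructed above, and then to supply the missing constant term for $\principalfinf \nu k m$ by a duality argument. First I would observe that every component of $\qinf \nu k m$, $\finferror \nu k m$, and $\ginferror \nu k n$ vanishes at each cusp not $\Gamma$-equivalent to $\cusp \infty$, so that in \eqref{Equation: Formal modular pairing} only the cusp $\cusp \infty$ survives, contributing its inner sum with the nonzero weight $\cuspwidth \Gamma {\cusp \infty}/\widehat w$. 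It therefore suffices to pair the principal part of each candidate at $\cusp \infty$ against the nonnegative-order part of a basis form at $\cusp \infty$, which I would evaluate using the echelon normalizations together with the index partition \eqref{Equation: IndexDef2}.

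For $\principalginf \nu k n$ no correction is needed, and I would dispatch it first. Pairing against $\Finf {\conj \nu} {2-k} {m'}$, the leading term $q^{-n}$ of $\qinf \nu k n$ contributes the coefficient of $q^n$ in $\Finf {\conj \nu} {2-k} {m'}$; since $n \in \tJ \nu k$ forces $n \notin \I {\conj \nu} {2-k}$ by \eqref{Equation: IndexDef2}, this coefficient is exactly $\Ainf {\conj \nu} {2-k} {m'} n$. The terms of $\ginferror \nu k n$ are indexed precisely by $\I {\conj \nu} {2-k}$ and pair with the pivot coefficients of $\Finf {\conj \nu} {2-k} {m'}$, contributing $-\Ainf {\conj \nu} {2-k} {m'} n$. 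These cancel, so $\set{\principalginf \nu k n, h}_\Gamma = 0$ for every $h \in M_{2-k}(\Gamma, \conj \nu)$.

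The identical computation, now with $\finferror \nu k m$ indexed by $\J {\conj \nu} {2-k}$, gives $\set{\principalfinf \nu k m, h}_\Gamma = 0$ for every cusp form $h \in \spcG {2-k}(\Gamma, \conj \nu)$: the term $q^{-m}$ pairs with the coefficient of $q^m$ in $\Ginf {\conj \nu} {2-k} {n'}$, equal to $\Binf {\conj \nu} {2-k} {n'} m$ since $m \notin \J {\conj \nu} {2-k}$, while $\finferror \nu k m$ supplies the cancelling $-\Binf {\conj \nu} {2-k} {n'} m$. Thus the functional $h \mapsto \set{\principalfinf \nu k m, h}_\Gamma$ annihilates $\spcG {2-k}(\Gamma, \conj \nu)$, hence factors through the constant-term map $M_{2-k}(\Gamma, \conj \nu) \to \bbC_{\widehat\Gamma, \conj \nu}$ whose kernel is precisely $\spcG {2-k}(\Gamma, \conj \nu)$.

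The remaining and most delicate step is to absorb this functional into a constant correction. Since $\mathbf x \in \bbC_{\widehat\Gamma, \nu}$ has only constant components, $\set{\mathbf x, h}_\Gamma$ depends on $h$ solely through its tuple of constant terms, via the restriction of \eqref{Equation: Formal modular pairing} to $\bbC_{\widehat\Gamma, \nu} \times \bbC_{\widehat\Gamma, \conj \nu}$. This restricted pairing is a sum over the cusps $\rho$ admitting a constant term (those with $0 \in \Zrho \nu \rho$, equivalently $0 \in \Zrho {\conj \nu} \rho$) of the products of corresponding constants, weighted by the positive numbers $\cuspwidth \Gamma \rho / \widehat w$, and is therefore a perfect pairing of $\bbC_{\widehat\Gamma, \nu}$ with $\bbC_{\widehat\Gamma, \conj \nu}$. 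Restricting to the image $W$ of the constant-term map, the induced map $\bbC_{\widehat\Gamma, \nu} \to W^*$ is surjective, so I may choose $\mathbf x$ representing the functional $-\set{\principalfinf \nu k m, \,\cdot\,}_\Gamma$, which is legitimate exactly because that functional descends to $W$. For this $\mathbf x$ we obtain $\set{\principalfinf \nu k m + \mathbf x, h}_\Gamma = 0$ for all $h$, as required. The main obstacle is this final step: one must verify both that the target functional genuinely factors through the constant terms and that the constant pairing is nondegenerate, which together guarantee that a representing $\mathbf x$ exists.
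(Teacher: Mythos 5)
Your proof is correct and follows essentially the same strategy as the paper's: pair the candidate principal parts against the reduced echelon bases so that the cusp-form pairings vanish via the pivot normalizations and the index partition \eqref{Equation: IndexDef2}, then absorb the remaining Eisenstein obstruction for $\principalfinf \nu k m$ into a constant tuple $\mathbf x \in \bbC_{\widehat\Gamma, \nu}$ (with no correction needed for $\principalginf \nu k n$). The only difference is bookkeeping: where the paper justifies solvability of the system $\set{\mathbf x, E_j}_\Gamma = -\set{\principalfinf \nu k m, E_j}_\Gamma$ by the dimension identity $\dim \bbC_{\widehat\Gamma, \nu} = \dim \spceis {k}\parent{\Gamma, \nu} + \dim \spceis {2-k}\parent{\Gamma, \conj\nu}$, you make the same mechanism explicit by factoring the functional through the constant-term map (whose kernel is $\spcG {2-k}(\Gamma, \conj\nu)$) and invoking nondegeneracy of the diagonal constant--constant block of the formal pairing, which is a fair and if anything more self-contained rendering of the step the paper calls ``straightforward to verify.''
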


\begin{proof}
	We first prove the claim for $\principalfinf \nu k m$. As the modular pairing is bilinear, it suffices to prove the claim for $h$ in a spanning set for $M_{2-k}(\Gamma, \conj \nu)$.
	
We know that
\[
	M_{2-k}(\Gamma, \conj \nu) = S_{2-k}(\Gamma, \conj \nu)\oplus\spceis {2-k} \parent{\Gamma, {\conj\nu}},
\]
	 where the Eisenstein subspace $\spceis {2-k} \parent{\Gamma, {\conj\nu}}$ is the subspace of $ M_k(\Gamma, \nu)$ which is orthogonal to $S_k(\Gamma, \nu)$ with respect to the Petersson inner product. Whereas the forms in $S_k(\Gamma, \nu)$ vanish at every cusp, the Eisenstein component of a modular form in $M_k(\Gamma, \nu)$ captures the behavior of the form at the cusps, and is uniquely determined by the constant terms at each cusp. Using the modular pairing, it is straightforward to verify that
	\begin{equation*}
		\dim \bbC_{\widehat\Gamma, \nu} = \dim \spceis {k} \parent{\Gamma, {\nu}} +\dim \spceis {2-k} \parent{\Gamma, {\conj\nu}}
	\end{equation*}
	Therefore, if $\epsilon=\dim \spceis {2-k} \parent{\Gamma, {\conj\nu}}$ and $\parent{E_j}_{1\leq j \leq \epsilon}$ is any basis for $\spceis {2-k} \parent{\Gamma, {\conj\nu}}$, then there is at least one solution $\mathbf x\in \bbC_{\widehat\Gamma, \nu}$ to the system of linear equations given by
	\[
	 	 \{\mathbf x, E_j \}_\Gamma \,=\, -\set{\principalfinf \nu k m , E_j}_\Gamma \ \ 1\leq j \leq \epsilon.
	\]
	Take $\mathbf x$ to be any such solution.

	The collection $\parent{\Ginf {\conj \nu}{2-k}n }_{n \in \J {\conj \nu}{2-k}}$ is a basis for $S_{2-k}(\Gamma, \conj\nu)$. Observe that
	\[
		\set{\principalfinf \nu k m+ \mathbf{x}, \Ginf {\conj \nu}{2-k}n}_\Gamma = \set{\principalfinf \nu k m, \Ginf {\conj \nu}{2-k}n}_\Gamma,
	\]
	and the only contribution comes from the cusp $\cusp \infty$. By \eqref{Defn: finferrorprincipal}, we have
	\[
		\ainf \nu k m {-\ell} = - \Binf {\conj \nu} {2-k} \ell m
	\]
	for $\ell \in \J {\conj \nu} {2-k}$. Thus, we compute
	\begin{align*}
	\set{\principalfinf \nu k m + \mathbf{x}, \Ginf {\conj \nu}{2-k}n}_\Gamma &= 	\set{\principalfinf \nu k m, \Ginf {\conj \nu}{2-k}n}_\Gamma \\
	&=	\frac{\cuspwidth \Gamma \infty}{\widehat w} \parent{\Binf {\conj \nu}{2-k}nm - \Binf {\conj \nu}{2-k}nm - \sum\limits_{\ell \in \J {\conj \nu} {2-k}} \Binf {\conj \nu} {2-k} \ell m \Binf {\conj \nu} {2-k} n \ell} \\
	&= - \frac{\cuspwidth \Gamma \infty}{\widehat w} \parent{\sum\limits_{\ell \in \J {\conj \nu} {2-k}} \Binf {\conj \nu} {2-k} \ell m \Binf {\conj \nu} {2-k} n \ell} \\
	&= 0,
	\end{align*}
	where the last equality holds because $\Binf {\conj \nu} {2-k} n \ell = 0$ for $\ell \in \J {\conj \nu} {2-k}$.
Therefore the claim holds with $\mathbf x$ chosen as above.
	
	The proof for $\principalginf \nu k m$ is similar, except that we use $\parent{\Finf \nu k m}_{m \in \I {\conj \nu} {2-k}}$ as our spanning set for $M_{2-k}(\Gamma, \conj \nu),$ and do not need to separate out the Eisenstein component.
\end{proof}

Theorem \ref{ThmFunkePairingFormExistence} and Lemma \ref{Lemma: finf and ginf exist} together imply the existence of forms in $M_k^!(\Gamma, \nu)$ with principal parts given by $\principalfinf \nu k m + \mathbf x$ and $\principalginf \nu k m$. However, we must do a little more linear algebra to obtain the desired row-reduced bases for their respective spaces.

Let $\widetilde \pi : M_k^!(\Gamma, \nu) \to M_k^!(\Gamma, \nu)$ and $\widehat \pi : M_k^!(\Gamma, \nu) \to M_k^!(\Gamma, \nu)$ be defined as follows. Given any form $h(z) = \sum_{n\in \Znu \nu}a(n)q^n \in M^!_k(\Gamma,\nu)$, let $\widetilde \pi(h)$ and $\widehat \pi(h)$ denote the forms obtained by reducing $h$ against the forms $\Finf \nu k m$ and $\Ginf \nu k m$, so that
\begin{align*}
	\widetilde \pi(h)(z) \ & = \ h(z) - \sum_{m \in \I {\nu} {k}}a(m) \Finf \nu k m (z) \ = \ \sum_{\substack{n\in \Znu \nu\\ n\not \in\I {\nu} {k} }}\widetilde a(n)q^n,\\
	\widehat \pi (h)(z) \ &= \ h(z) -\sum_{n \in \J {\nu} {k}}a(n) \Ginf \nu k n (z)\ = \ \sum_{\substack{m \in \Znu \nu\\ m \not \in\J {\nu} {k} }}\widehat a(m)q^m.
\end{align*}
Informally, $\widetilde \pi$ row-reduces $h$ against the forms in $M_k(\Gamma, \nu)$, and $\widehat \pi$ row-reduces $h$ against the forms in $\spcG k(\Gamma, \nu)$.

By Lemma \ref{Lemma: finf and ginf exist} and Theorem \ref{ThmFunkePairingFormExistence}, for each $m \in \tI \nu k$ with $m > 0$ there exists at least one form $f \in M_k^!(\Gamma, \nu)$ such that for each $\gamma \in \widehat{\Gamma}$, we have $f^\gamma = \principalfinfpart \nu k m \gamma + O(1)$. Let $\finf \nu k m$ be the unique such form with
\begin{align*}
	\finf \nu k m &= \widetilde \pi\parent{\finf \nu k m}.
\end{align*}
Similarly, for each $n \in \tJ \nu k$ with $n \geq 0$ there exists at least one form $g \in M_k^!(\Gamma, \nu)$ such that for each $\gamma \in \widehat{\Gamma}$, we have $g^\gamma = \principalginfpart \nu k n \gamma + o(1)$. Let $\ginf \nu k n$ be the unique such form with
\begin{align*}
\ginf \nu k n &= \widehat{\pi}\parent{\ginf \nu k n}.
\end{align*}

Our next proposition confirms that the forms we have constructed actually comprise row-reduced bases for $\spcfinf k (\Gamma, \nu)$ and its subspace $\spcginf k \parent{\Gamma, \nu}$ of forms which vanish at every cusp other than $\infty$.

\begin{prop}\label{Proposition: finf and ginf are canonical bases}

	Assume the notation above. The sequence $\parent{\finf \nu k m}_{m \in \tI \nu k}$ is a basis for $\spcfinf k \parent{\Gamma, \nu}$, and the sequence $\parent{\ginf \nu k n}_{n \in \tJ \nu k}$ is a basis for $\spcginf k \parent{\Gamma, \nu}$. Furthermore, if $f(z) = \sum\limits_{n \in \Znu \nu} a(n) q^n \in \spcfinf k (\Gamma, \nu)$ and $g(z) = \sum\limits_{m \in \Znu \nu} b(m) q^m \in \spcginf k (\Gamma, \nu),$
	then
\begin{equation}\label{Equation: BasisExp}
	f = \sum\limits_{m \in \tI \nu k} a(-m) \finf \nu k {m} \ \ \ \text{and} \ \ \
	g = \sum\limits_{n \in \tJ \nu k} b(-n) \ginf \nu k {n}.
\end{equation}
\end{prop}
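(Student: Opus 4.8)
The plan is to prove the explicit expansion formula \eqref{Equation: BasisExp}; from it both conclusions follow immediately, since comparing coefficients shows the representation is unique, giving simultaneously that the $\parent{\finf \nu k m}$ span $\spcfinf k(\Gamma,\nu)$ and are linearly independent (and likewise for the $\ginf \nu k n$). I would carry out the $\finf \nu k m$ case in full and indicate the parallel argument for $\ginf \nu k n$ at the end.

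First I would record the echelon shape of the constructed forms. From the construction of $\principalfinf \nu k m$, the index bookkeeping in \eqref{Equation: IndexDef2}, and the fact that $-\tI \nu k$ and $-\J {\conj \nu} {2-k}$ are disjoint subsets of $\Znu \nu$ (since $\tI \nu k$ and $\J {\conj \nu} {2-k}$ are disjoint by definition), each $\finf \nu k m$ has the shape $\finf \nu k m(z) = q^{-m} + \sum_{n \in \tJ {\conj \nu} {2-k}} \ainf \nu k m n q^n$; that is, the unique exponent from $-\tI \nu k$ occurring in $\finf \nu k m$ is $-m$, with coefficient $1$. The same construction shows $\finf \nu k m$ has poles only at $\cusp \infty$, because its candidate principal part vanishes at every other cusp, so $\finf \nu k m \in \spcfinf k (\Gamma, \nu)$; similarly each $\ginf \nu k n$ lies in $\spcginf k(\Gamma,\nu)$, since its candidate has vanishing constant term at every cusp but $\cusp \infty$ and $\widehat \pi$ subtracts only cusp forms.

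Given $f = \sum_n a(n) q^n \in \spcfinf k(\Gamma, \nu)$, I would set $h = f - \sum_{m \in \tI \nu k} a(-m) \finf \nu k m$. This sum is finite: the poles of $f$ contribute only finitely many positive $m$, while the non-positive elements of $\tI \nu k$ form the finite set $-\I \nu k$. By the echelon shape, the coefficient of $q^{-m}$ in the sum equals $a(-m)$ for every $m \in \tI \nu k$, so $h$ is supported entirely on $\tJ {\conj \nu} {2-k} = \Znu \nu \setminus \parent{-\tI \nu k}$. It then remains to show $h = 0$.

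This vanishing is the crux of the argument. Writing the negative part of $\tJ {\conj \nu} {2-k}$ as $-\J {\conj \nu} {2-k}$, suppose $h$ has a pole and let $-n_1$, with $n_1 \in \J {\conj \nu} {2-k}$, be its most negative exponent. Pairing $h$ against the cusp form $\Ginf {\conj \nu} {2-k} {n_1}$, Theorem \ref{ThmPairing} gives $\set{h, \Ginf {\conj \nu} {2-k} {n_1}}_\Gamma = 0$; but as $h$ has poles only at $\cusp \infty$ and $\Ginf {\conj \nu} {2-k} {n_1}$ vanishes at every cusp, only $\cusp \infty$ contributes, where the pairing computes the constant term of the product, namely the leading coefficient of $h$ --- a contradiction. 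Hence $h$ is holomorphic, so $h \in M_k(\Gamma, \nu)$ with Fourier support avoiding $\I \nu k$; since $\parent{\Finf \nu k m}_{m \in \I \nu k}$ is in reduced echelon form, the leading exponent of any nonzero such form lies in $\I \nu k$, forcing $h = 0$. The argument for $g \in \spcginf k(\Gamma, \nu)$ is structurally identical: with $h' = g - \sum_{n \in \tJ \nu k} b(-n) \ginf \nu k n$, one pairs the putative leading pole $-m_1$ against the holomorphic form $\Finf {\conj \nu} {2-k} {m_1}$, uses that $h'$ vanishes at every cusp but $\cusp \infty$ to localize the pairing, and concludes that $h'$ is holomorphic and vanishes at all cusps, hence is a cusp form with support avoiding $\J \nu k$, which is therefore $0$ by the echelon property of $\parent{\Ginf \nu k n}_n$. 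The main obstacle throughout is this final vanishing step, and in particular justifying that the modular pairing localizes at $\cusp \infty$ so that it reads off the leading coefficient.
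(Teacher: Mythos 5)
Your proposal is correct, and it shares the paper's skeleton: both subtract the candidate expansion to form $h$ supported on $\tJ{\conj\nu}{2-k}$, and both kill $h$ by localizing the pairing of Theorem \ref{ThmPairing} at the cusp $\cusp\infty$. But the vanishing step itself is executed by a genuinely different route. The paper pairs $h$ against the \emph{weakly holomorphic} dual basis: for each $n \in \tJ{\conj\nu}{2-k}$ it computes
\[
\set{h, \ginf{\conj\nu}{2-k}n}_\Gamma = \frac{\cuspwidth \Gamma \infty}{\widehat w}\, c(n),
\]
since $c(\ell)$ vanishes on $-\tI\nu k$ while $\binf{\conj\nu}{2-k}n{-\ell}$ vanishes on $\tJ{\conj\nu}{2-k}$, and this reads off every coefficient of $h$ --- principal part, constant term, and holomorphic tail --- in a single stroke. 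You instead pair only against \emph{holomorphic} forms ($\Ginf{\conj\nu}{2-k}{n_1}$, resp. $\Finf{\conj\nu}{2-k}{m_1}$) to kill the extremal negative coefficient, and then dispatch the remaining holomorphic (resp. cuspidal) piece by the echelon property of $\parent{\Finf\nu k m}_m$ (resp. $\parent{\Ginf \nu k n}_n$). Your route has the merit of never invoking the already-constructed dual family $\parent{\ginf{\conj\nu}{2-k}n}_n$ --- it uses exactly the same pairing partners as Lemma \ref{Lemma: finf and ginf exist} --- at the cost of an extremal-exponent argument plus a separate linear-algebra step; the paper's one-line pairing identity is shorter and needs no case analysis. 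One point to tighten in your $g$-case: since $0$ may lie in $\I{\conj\nu}{2-k}$ (holomorphic forms of weight $2-k$ can have nonzero constant term, e.g.\ Eisenstein series), the ``leading pole'' $-m_1$ must be allowed to equal $0$, and pairing $h'$ against $\Finf{\conj\nu}{2-k}{0}$ is precisely what kills the constant term at $\cusp\infty$ and makes $h'$ an honest cusp form; with ``leading pole'' read as ``least element of the support lying in $-\I{\conj\nu}{2-k}$'', your argument goes through verbatim.
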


\begin{proof}
	By construction, $\finf \nu k m |_{k, \nu} \gamma = O(1)$ and $\ginf \nu k n |_{k, \nu} \gamma = o(1)$ for all $\gamma \in \widehat \Gamma$ with $\cusp{\gamma \infty} \neq \cusp{\infty}$. Therefore $\finf \nu k m \in \spcfinf k \parent{\Gamma, \nu}$ and $\ginf \nu k n \in \spcginf k \parent{\Gamma, \nu}$ as desired. Moreover, the sequences $\parent{\finf \nu k m}_{m}$ and $\parent{\ginf \nu k n}_{n}$ are each linearly independent over $\bbC$, since their terms have distinct orders at infinity.
	
We will show that \eqref{Equation: BasisExp} holds for $\parent{\finf \nu k m}_{m}$. This establishes that the set spans $\spcfinf k \parent{\Gamma, \nu}$, and is therefore a basis. The argument for $\parent{\ginf \nu k n}_{n}$ is completely analogous.
	
Given $f(z)$ as above, consider the function $h(z)$ obtained by taking the difference
	\[
		h(z) = f(z) - \sum\limits_{m \in \tI \nu k} a(-m) \finf \nu k {m} (z) = \sum\limits_{n \in \tJ {\conj\nu} {2-k}} c(n) q^n,
	\]
which has finitely many terms because both $\tI \nu k$ and the indices of the Fourier coefficients of $f(z)$ are bounded below. Here we have used the fact that $\Znu \nu=\parent{-\tI { \nu} {k}} \sqcup \tJ {\conj\nu} {2-k}.$ If $n \in \tJ {\conj \nu}{2-k}$, then by Theorem \ref{ThmPairing} we know that
$\set{h, \ginf {\conj \nu}{2-k}n}_\Gamma = 0.$
Only the contribution from the cusp $\cusp \infty$ may be non-trivial since $\ginf {\conj \nu}{2-k}n$ vanishes at the other cusps, so
\begin{align*}
	\set{h, \ginf {\conj \nu}{2-k}n}_\Gamma = \frac{\cuspwidth \Gamma \infty}{\widehat w} \parent{c(n) + \sum\limits_{\ell \in \Znu \nu} \binf {\conj \nu}{2-k}n{-\ell} c(\ell)} = 0.
	\end{align*} But $c(\ell) = 0$ for all $\ell \in -\tI \nu k$, and $\binf {\conj \nu}{2-k}n{-\ell} = 0$ for all $\ell \in \tJ {\conj \nu}{2-k},$ so
	\[
		 \set{h, \ginf {\conj \nu}{2-k}n}_\Gamma = \frac{\cuspwidth \Gamma \infty}{\widehat w} c(n) =0.
	\]
	Since $n \in \tJ {\conj \nu}{2-k}$ was arbitrary, $h$ is identically zero, as desired.
\end{proof}

As a corollary, the bases $\parent{\finf \nu k m}_m$ and $\parent{\ginf \nu k n}_n$ are independent of our choice of $\widehat{\Gamma}$.

We are now in a position to prove Theorem \ref{IntroThmDualitySimple}. We restate it here for ease of reference.   Recall that the coefficients $\ainf { \nu} k mn$ and $\binf {\conj \nu} {2-k} nm$ are defined from the Fourier expansions
\begin{align*}
	\finf {\nu} {k} m(z) &= q^{-m} + \sum_{\substack {n \in \Znu \nu \\ -n \not\in \tI \nu k}} \ainf {\nu} {k} mn q^n \ \ \textrm{for} \ \ m \in \tI {\nu} {k}, \\
	\ginf \nu k n (z) &= q^{-n} + \sum_{\substack {m \in \Znu \nu \\ -m \not\in \tJ \nu k}} \binf \nu k nm q^m \ \ \textrm{for} \ \ n \in \tJ \nu k.
\end{align*}

\begin{theorem}\label{ThmDualityDetail}
Let $\Gamma $ be a commensurable subgroup, let $k\in \frac 1 2 \Z,$ and let $\nu$ be any consistent finite weight $k$ multiplier. The coefficients of the forms $\parent{\finf \nu k m(z)}_m$ and $\parent{\ginf {\conj \nu} {2-k} n(z)}_n$ satisfy
\[
	\ainf { \nu} k mn = - \binf {\conj \nu} {2-k} nm.
\]
\end{theorem}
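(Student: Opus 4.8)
The plan is to apply the modular pairing of Theorem \ref{ThmPairing} directly to the two basis elements and read off the duality from the single surviving cusp contribution. Since $\finf \nu k m \in \spcfinf k (\Gamma, \nu) \subseteq M^!_k(\Gamma, \nu)$ and $\ginf {\conj \nu} {2-k} n \in \spcginf {2-k}(\Gamma, \conj \nu) \subseteq M^!_{2-k}(\Gamma, \conj \nu)$, Theorem \ref{ThmPairing} gives $\set{\finf \nu k m, \ginf {\conj \nu} {2-k} n}_\Gamma = 0$. First I would show that in the cusp sum defining this pairing every term indexed by a cusp $\cusp \rho \neq \cusp \infty$ vanishes: at such a cusp $\finf \nu k m$ is holomorphic, since its poles occur only at $\infty$, so its expansion there has no coefficients of negative index, while $\ginf {\conj \nu} {2-k} n$ vanishes, so its expansion there has no coefficients of nonpositive index. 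A product $a^{\gamma_\rho}(\ell)\, b^{\gamma_\rho}(-\ell)$ thus cannot be nonzero for any $\ell$, and only the term at $\cusp \infty$ survives, leaving
\[
0 = \set{\finf \nu k m, \ginf {\conj \nu} {2-k} n}_\Gamma = \frac{\cuspwidth \Gamma \infty}{\widehat w} \sum_{\ell \in \Znu \nu} a(\ell)\, b(-\ell),
\]
where $a(\ell)$ is the coefficient of $q^\ell$ in $\finf \nu k m$ and $b(-\ell)$ the coefficient of $q^{-\ell}$ in $\ginf {\conj \nu} {2-k} n$, both taken at $\cusp \infty$ with $-\ell \in \Znu {\conj \nu}$.

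The substance of the argument is then to show that this inner sum collapses to exactly two terms. I would split it using the partition \eqref{Equation: IndexDef2}, namely $\Znu \nu = \parent{-\tI \nu k} \sqcup \tJ {\conj \nu} {2-k}$, together with its dual form $\Znu {\conj \nu} = \tI \nu k \sqcup \parent{-\tJ {\conj\nu} {2-k}}$ obtained by applying \eqref{Equation: IndexDef2} to $(\conj \nu, 2-k)$. For $\ell \in -\tI \nu k$, write $\ell = -m'$ with $m' \in \tI \nu k$; the reduced echelon form of $\finf \nu k m$, encoded by the condition $-n \not\in \tI \nu k$ in its expansion, forces $a(-m') = 0$ unless $m' = m$, where $a(-m) = 1$, so the single contribution here is $a(-m)\, b(m) = \binf {\conj \nu} {2-k} n m$, using that $m \in \tI \nu k$ so $b(m)$ is indeed a row-reduced coefficient of $\ginf {\conj \nu} {2-k} n$. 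For $\ell \in \tJ {\conj \nu} {2-k}$, the reduced form of $\ginf {\conj \nu} {2-k} n$, whose leading term sits at index $-n$, forces $b(-\ell) = 0$ unless $\ell = n$, where $b(-n) = 1$, so the single contribution is $a(n)\, b(-n) = \ainf \nu k m n$. The two surviving indices $\ell = -m$ and $\ell = n$ lie in different blocks of the partition, hence are distinct, so there is no double counting.

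Combining the two cases gives $\sum_{\ell \in \Znu \nu} a(\ell)\, b(-\ell) = \ainf \nu k m n + \binf {\conj \nu} {2-k} n m$, and since the factor $\cuspwidth \Gamma \infty / \widehat w$ is positive, the vanishing of the pairing yields $\ainf \nu k m n = -\binf {\conj \nu} {2-k} n m$, as claimed. I expect the main obstacle to be purely bookkeeping: one must match the index-set conventions precisely so that the echelon conditions $-n \not\in \tI \nu k$ and $-m \not\in \tJ {\conj \nu} {2-k}$ in the two Fourier expansions annihilate every cross term except the two diagonal ones, and one must verify along the way that the coefficients $\ainf \nu k m n$ and $\binf {\conj \nu} {2-k} n m$ named in the statement are in fact defined, i.e. that $n \in \tJ {\conj \nu} {2-k}$ and $m \in \tI \nu k$ fall in the admissible ranges dictated by \eqref{Equation: IndexDef2}. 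Beyond this, the only analytic input is the already-established pairing, so the content lies entirely in the combinatorics of the indexing sets and the reduced form of the bases.
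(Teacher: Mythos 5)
Your proposal is correct and follows essentially the same route as the paper's proof: apply the vanishing of the pairing from Theorem \ref{ThmPairing} to $\finf \nu k m$ and $\ginf {\conj \nu}{2-k} n$, observe that only the cusp $\cusp \infty$ contributes since the product vanishes at all other cusps, and then use the partition \eqref{Equation: IndexDef2} to annihilate every cross term, leaving exactly $\ainf \nu k mn + \binf {\conj \nu}{2-k} nm = 0$. The only cosmetic difference is that the paper separates the two diagonal terms from the cross-term sum at the outset, whereas you split the full sum along the partition and identify the two survivors, with the same bookkeeping verifying that both coefficients are admissible in their respective expansions.
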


\begin{proof}
We apply the modular pairing to $\finf \nu k m$ and $\ginf {\conj \nu} {2-k} n$. By Lemma \ref{Lemma: finf and ginf exist}, these are both weakly holomorphic functions, and so by Theorem \ref{ThmPairing} we see \[
\set{\finf \nu k m, \ginf {\conj \nu} {2-k} n}_\Gamma = 0.
\] On the other hand, since $\finf \nu k m \ginf {\conj \nu} {2-k} n$ vanishes at every cusp other than $\cusp \infty$, we have by \eqref{Equation: Formal modular pairing} that
\begin{align}\label{pairingeq}
\set{\finf \nu k m, \ginf {\conj \nu} {2-k} n}_\Gamma &= \frac{\cuspwidth \Gamma \infty}{\widehat w} \parent{\ainf \nu k m n + \binf {\conj \nu} {2-k} n m + \sum\limits_{\ell \in \Znu \nu} \ainf \nu k m {\ell} \binf {\conj \nu} {2-k} n {-\ell}}.
\end{align}
Note $\ainf \nu k m {\ell} = 0$ if $\ell \in -\tI {\nu} k$, and $\binf {\conj \nu} {2-k} n {-\ell} = 0$ if $\ell \in \tJ {\conj \nu} {2-k}.$ But by \eqref{Equation: IndexDef2} this covers every $\ell$, so \eqref{pairingeq} reduces to
\[
\set{\finf \nu k m, \ginf {\conj \nu} {2-k} n}_\Gamma = \frac{\cuspwidth \Gamma \infty}{\widehat w} \parent{\ainf \nu {2-k} m n + \binf {\conj \nu} k n m} = 0.
\]

This concludes the proof.
\end{proof}

In particular, $(\finf \nu k m, \ginf {\conj \nu} {2-k} n)_{m, n}$ constitutes a modular grid.

\section{Generalizations}\label{SecGeneralizations}
In this section we consider generalizations of Theorem~\ref{IntroThmDualitySimple} to spaces with different vanishing conditions and forms with poles at other cusps.

\subsection{Vanishing conditions at cusps}\label{SubsecVanishingForms}

The proof of coefficient duality above requires that the product $\finf \nu {2-k} m \ginf {\conj \nu} k n$ vanishes at each cusp other than $\cusp \infty$. However, the requirement that $\ginf {\conj \nu} k n$ vanishes at each cusp other than $\cusp \infty$ is merely a technical convenience, and duality theorems hold for row-reduced bases for other subspaces of $\spcfinf k \parent{\Gamma, \nu}$.

One such example is given in~\cite{Haddock-Jenkins}. Consider the space of weakly holomorphic modular forms for $\Gamma_0(4)$ with poles only at the cusp at $\infty$, and the subspaces consisting (respectively) of forms which vanish at the cusp at $0$ and forms which vanish at the cusp at $1/2$. Defining row-reduced canonical bases ${h_{k, m}^{(4)}}$ and $i_{k, m}^{(4)}$ for these subspaces, the product of $h_{k, m}^{(4)}$ and $i_{k, n}^{(4)}$ will be a modular form of weight $2$ vanishing at all cusps other than $\infty$. Since all such forms have zero constant term, it follows that the Fourier coefficients of the $h_{k, m}^{(4)}$ are dual to the Fourier coefficients of the $i_{k, m}^{(4)}$.

To generalize this, let $U \subseteq \cusps \Gamma$ be any set of cusps, and write $\overline U$ for the complement of $U$ in $\cusps \Gamma$. Thus we have $U \sqcup \overline{U} = \cusps \Gamma.$ We define $M_k(\Gamma, \nu, U)$ to be the space of weight $k$ holomorphic modular forms $f$ for the group $\Gamma$ and multiplier $\nu$ such that $f$ vanishes at each cusp in $U$. The space $M_k(\Gamma, \nu, U)$ has finite dimension, and so has a basis of forms whose Fourier coefficients at $\cusp \infty$ are row-reduced. We denote this basis by
\[
	\Fvan \nu k m U (z) = q^m+\sum_{\substack{n\in \Znu \nu _{\geq 0}\\ n \not\in \Ivan \nu k U}} \Avan \nu k mn U q^n \ \ : \ \ m\in \Ivan \nu k U .
\]
Here, the finite set $\Ivan \nu k U \subseteq \Znu \nu _{\geq 0}$ is defined implicitly to be the set of indices for the reduced basis. Writing $\Omega=\cusps \Gamma$, we see that
\[
	\parent{\Fvan \nu k m \emptyset}_{m} \ \text{and} \ \parent{\Fvan \nu k n {\Omega
	}}_{n}
\]
furnish bases for $M_k(\Gamma)$ and $S_k(\Gamma)$. Indeed, $\parent{\Fvan \nu k m \emptyset}_{m} = \parent{\Finf \nu k m}_{m}$ and $\parent{\Fvan \nu k n {\Omega
}}_{n} = \parent{\Ginf \nu k n}_{n}$.

Let $\spcfinf k(\Gamma, \nu, U)$ denote the space of weakly holomorphic modular forms which are holomorphic at every cusp other than $\cusp \infty$, and which vanish at each cusp in $U$ other than $\infty$. We see that
\[
	\spcfinf k(\Gamma, \nu, \emptyset) = \spcfinf k(\Gamma, \nu) \ \ \ \text{and} \ \ \ \spcfinf k(\Gamma, \nu, \Omega
	) = \spcginf k(\Gamma, \nu).
\]

Finally, define
\[
	\spceis k (\Gamma, \nu, U) = \spceis k (\Gamma, \nu) \cap M_k(\Gamma, \nu, U)
\]
and
\[
	\bbC_{\widehat\Gamma, \nu}(U) = \set{\textbf{x} = \parent{x^\lambda}_\lambda \in \bbC_{\widehat\Gamma, \nu} \ \ : \ \ x^\lambda = 0 \ \text{whenever} \ \lambda \infty \in U}.
\]
It is straightforward to verify that
\begin{align*}
		M_{2-k}(\Gamma, \conj \nu) &= M_{2-k}(\Gamma, \conj \nu, \overline U) \oplus \spceis {2-k} \parent{\Gamma, {\conj\nu}, U}
		\intertext{and that}
		\dim \bbC_{\widehat\Gamma, \nu}(U) &= \dim \spceis {k} \parent{\Gamma, {\nu}, U} + \dim \spceis {2-k} \parent{\Gamma, {\conj\nu}, U}.
\end{align*}

With these tools in hand, we define the row-reduced canonical basis
\[
	\fvan \nu k m U(z) = q^{-m} + \sum\limits_{\substack{n \in \Znu \nu \\ -n \not\in \tIvan \nu k U}} \avan {\nu} {k} mn U q^n \ \ : \ \ m \in \tIvan \nu k U .
\]
for $\spcfinf k(\Gamma, \nu, U)$ in a manner totally analogous to the construction of $\finf \nu k m$ and $\ginf \nu k m$ in Subsection \ref{SubsecFormsatinf}.

\begin{prop}\label{Proposition: van-bases are canonical}
	Assume the notation above. The sequence $\parent{\fvan \nu k m U}_{m \in \tIvan \nu k U}$ is a basis for $\spcfinf k \parent{\Gamma, \nu, U}$. Furthermore, if $f(z) = \sum\limits_{n \in \Znu \nu} a(n) q^n \in \spcfinf k (\Gamma, \nu, U)$, then
\[
	f = \sum\limits_{m \in \tIvan \nu k U} a(-m) \fvan \nu k {m} U.
\]
\end{prop}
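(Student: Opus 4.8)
The plan is to follow the proof of Proposition~\ref{Proposition: finf and ginf are canonical bases} essentially verbatim, replacing the dual space $\spcginf {2-k}(\Gamma, \conj \nu)$ used there with the space $\spcfinf {2-k}(\Gamma, \conj \nu, \overline U)$ of forms vanishing on $\overline U$, and tracking the vanishing conditions carefully. First I would check that each $\fvan \nu k m U$ genuinely lies in $\spcfinf k(\Gamma, \nu, U)$: by the construction ``totally analogous'' to that of $\finf \nu k m$, these forms are produced from principal parts in $\bbC_{\widehat\Gamma, \nu}(U)$, so they are holomorphic away from $\cusp \infty$ and have vanishing constant term at every cusp in $U$ other than $\cusp \infty$; hence they vanish there. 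Linear independence is immediate, since the $\fvan \nu k m U$ have leading terms $q^{-m}$ with distinct $m \in \tIvan \nu k U$, so their orders at $\cusp \infty$ are distinct.

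For spanning, given $f(z) = \sum_{n \in \Znu \nu} a(n)q^n \in \spcfinf k(\Gamma, \nu, U)$, I would form the finite-tailed difference
\[
	h(z) = f(z) - \sum_{m \in \tIvan \nu k U} a(-m) \fvan \nu k m U(z).
\]
Because the $\fvan \nu k m U$ are row-reduced (leading term $q^{-m}$, and tail exponents $n$ with $-n \not\in \tIvan \nu k U$), this subtraction forms a unitriangular system that removes exactly the coefficients of $f$ indexed by $-\tIvan \nu k U$. Invoking the analogue of the partition \eqref{Equation: IndexDef2} produced by the construction, namely $\Znu \nu = \parent{-\tIvan \nu k U} \sqcup \tIvan {\conj \nu}{2-k}{\overline U}$, I conclude that the support of $h$ lies in $\tIvan {\conj \nu}{2-k}{\overline U}$, so $h = \sum_{n \in \tIvan {\conj \nu}{2-k}{\overline U}} c(n) q^n$.

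The heart of the argument is then the pairing step. The product $h \cdot \fvan {\conj \nu}{2-k}n{\overline U}$ vanishes at every cusp other than $\cusp \infty$: at a cusp lying in $U$ the factor $h$ vanishes, while at a cusp lying in $\overline U$ the factor $\fvan {\conj \nu}{2-k}n{\overline U}$ vanishes, and $U \sqcup \overline U = \cusps \Gamma$. Since $\nu \conj \nu$ is trivial, this product defines a weight $2$ meromorphic differential on $X(\Gamma)$, so Theorem~\ref{ThmPairing} gives $\set{h, \fvan {\conj \nu}{2-k}n{\overline U}}_\Gamma = 0$ for each $n \in \tIvan {\conj \nu}{2-k}{\overline U}$. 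Evaluating this pairing through \eqref{Equation: Formal modular pairing}, only the cusp $\cusp \infty$ contributes, and the row-reduced (triangular) structure of the two expansions collapses the inner sum, exactly as in Proposition~\ref{Proposition: finf and ginf are canonical bases}, to $\frac{\cuspwidth \Gamma \infty}{\widehat w} c(n)$. Hence $c(n) = 0$ for every such $n$, so $h \equiv 0$; this simultaneously shows the $\fvan \nu k m U$ span $\spcfinf k(\Gamma, \nu, U)$ and establishes the explicit expansion $f = \sum_{m \in \tIvan \nu k U} a(-m)\fvan \nu k m U$.

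I expect the routine pairing computation to be the easy part, since it is line-for-line the base case. The main obstacle is the bookkeeping that makes the proof \emph{apply}: namely, verifying that the ``totally analogous'' construction does output the partition $\Znu \nu = \parent{-\tIvan \nu k U} \sqcup \tIvan {\conj \nu}{2-k}{\overline U}$ together with the correct dual vanishing space $\spcfinf {2-k}(\Gamma, \conj \nu, \overline U)$. This in turn rests on the decomposition $M_{2-k}(\Gamma, \conj \nu) = M_{2-k}(\Gamma, \conj \nu, \overline U) \oplus \spceis {2-k}(\Gamma, \conj \nu, U)$ and the dimension count $\dim \bbC_{\widehat\Gamma, \nu}(U) = \dim \spceis k(\Gamma, \nu, U) + \dim \spceis {2-k}(\Gamma, \conj \nu, U)$ recorded above, which are precisely the inputs needed to run the analogue of Lemma~\ref{Lemma: finf and ginf exist}; and on the product-vanishing observation, which is the conceptual replacement for the ``vanishes everywhere but $\cusp \infty$'' hypothesis driving the original proof.
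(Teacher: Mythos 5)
Your proposal is correct and is exactly the argument the paper intends: the paper states Proposition \ref{Proposition: van-bases are canonical} without an explicit proof, deferring to the construction ``totally analogous'' to Subsection \ref{SubsecFormsatinf}, and your adaptation of Lemma \ref{Lemma: finf and ginf exist} and Proposition \ref{Proposition: finf and ginf are canonical bases} --- the partition $\Znu \nu = \parent{-\tIvan \nu k U} \sqcup \tIvan {\conj\nu}{2-k}{\overline U}$, the dual space $\spcfinf {2-k}(\Gamma, \conj\nu, \overline U)$, and the observation that $h \cdot \fvan {\conj\nu}{2-k}n{\overline U}$ vanishes at every cusp other than $\cusp \infty$ because $U \sqcup \overline U = \cusps \Gamma$ --- is precisely that intended generalization. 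You also correctly identify the decomposition $M_{2-k}(\Gamma, \conj\nu) = M_{2-k}(\Gamma, \conj\nu, \overline U) \oplus \spceis {2-k}\parent{\Gamma, \conj\nu, U}$ and the dimension count for $\bbC_{\widehat\Gamma, \nu}(U)$ as the inputs that make the existence lemma run, which is exactly how the paper sets up this section.
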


As a corollary, if $V = U \cup \set \infty$ then $\tIvan \nu k U = \tIvan \nu k V$ and $\fvan \nu k m U = \fvan \nu k m V$ for $m \in \tIvan \nu k U$. This is unsurprising, since by definition $\spcfinf k (\Gamma, \nu, U) = \spcfinf k (\Gamma, \nu, V)$. Moreover, as both $\cusps \Gamma$ and $\spcfinf k (\Gamma, \nu, U)$ are independent of our choice of $\widehat{\Gamma}$, we see that $\parent{\fvan \nu k m U}_m$ is independent of $\widehat{\Gamma}$.

Now, in analogy with Theorem \ref{IntroThmDualitySimple}/Theorem~\ref{ThmDualityDetail}, we have the following theorem.
\begin{theorem}\label{ThmVanishingDuality}
	Let $\Gamma$ be a commensurable subgroup, let $k\in \frac 1 2 \Z,$ let $\nu$ be any consistent finite weight $k$ multiplier, and let $U$ be a set of cusps for $\Gamma$. The coefficients of the forms $\parent{\fvan \nu k m U}_m$ and $\parent{\fvan {\conj \nu} {2-k} n {\overline U}}_n$ satisfy
	\[\avan { \nu} k mn U = - \avan {\conj \nu} {2-k} nm {\overline U}.\]
\end{theorem}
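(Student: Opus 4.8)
The plan is to follow the proof of Theorem~\ref{ThmDualityDetail} essentially verbatim, replacing its use of the hypothesis that $\ginf {\conj \nu} k n$ vanishes away from $\cusp\infty$ by a cusp-by-cusp argument tailored to the vanishing set $U$. By Proposition~\ref{Proposition: van-bases are canonical} the forms $\fvan \nu k m U \in \spcfinf k(\Gamma, \nu, U) \subseteq M^!_k(\Gamma, \nu)$ and $\fvan {\conj\nu}{2-k}n{\overline U} \in \spcfinf{2-k}(\Gamma, \conj\nu, \overline U) \subseteq M^!_{2-k}(\Gamma, \conj\nu)$ exist and are canonical; since they have dual weight and conjugate multiplier, their product is a weight-$2$ meromorphic modular form with trivial multiplier. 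Theorem~\ref{ThmPairing} then gives $\set{\fvan \nu k m U, \fvan {\conj\nu}{2-k}n{\overline U}}_\Gamma = 0$.

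The one genuinely new step is to verify that the product $\fvan \nu k m U \cdot \fvan {\conj\nu}{2-k}n{\overline U}$ vanishes at every cusp $\rho \neq \infty$, so that only the $\cusp\infty$ term of~\eqref{Equation: Formal modular pairing} survives. This is immediate from $U \sqcup \overline U = \cusps\Gamma$: for each $\rho \neq \infty$ we have either $\rho \in U$, whence $\fvan \nu k m U$ vanishes at $\rho$, or $\rho \in \overline U$, whence $\fvan {\conj\nu}{2-k}n{\overline U}$ vanishes at $\rho$; in either case the complementary factor is holomorphic at $\rho$, since both forms are holomorphic at every cusp except $\cusp\infty$. Thus the product vanishes at $\rho$ regardless.

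With these two facts the pairing collapses exactly as in~\eqref{pairingeq} to
\begin{equation*}
0 = \frac{\cuspwidth \Gamma \infty}{\widehat w} \parent{ \avan \nu k mn U + \avan {\conj\nu}{2-k}nm{\overline U} + \sum_{\ell \in \Znu\nu} \avan \nu k m\ell U \, \avan {\conj\nu}{2-k}n{-\ell}{\overline U} }.
\end{equation*}
To kill the sum I would invoke the index-partition identity
\begin{equation*}
\parent{-\tIvan \nu k U} \sqcup \tIvan {\conj\nu}{2-k}{\overline U} = \Znu\nu,
\end{equation*}
the exact analog of~\eqref{Equation: IndexDef2}, which recovers it in the extreme cases $U = \emptyset$ and $U = \cusps\Gamma$. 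Granting this, the echelon shape of the bases forces $\avan \nu k m\ell U = 0$ for $\ell \in -\tIvan \nu k U$ and $\avan {\conj\nu}{2-k}n{-\ell}{\overline U} = 0$ for $\ell \in \tIvan {\conj\nu}{2-k}{\overline U}$; since these two sets exhaust $\Znu\nu$, every term of the sum is annihilated, leaving $\avan \nu k mn U = -\avan {\conj\nu}{2-k}nm{\overline U}$.

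The main obstacle is establishing the index-partition identity above. I expect it to emerge from the construction of $\tIvan \nu k U$ carried out ``in a manner totally analogous'' to Subsection~\ref{SubsecFormsatinf}, using the complementary decomposition $M_{2-k}(\Gamma, \conj\nu) = M_{2-k}(\Gamma, \conj\nu, \overline U) \oplus \spceis{2-k}\parent{\Gamma, \conj\nu, U}$ and the dimension count $\dim \bbC_{\widehat\Gamma, \nu}(U) = \dim \spceis{k}\parent{\Gamma, \nu, U} + \dim \spceis{2-k}\parent{\Gamma, \conj\nu, U}$ recorded above. Concretely, one must show that the pole orders at $\cusp\infty$ forbidden to $\spcfinf k(\Gamma, \nu, U)$ are precisely the negatives of the zero orders at $\cusp\infty$ realized by $M_{2-k}(\Gamma, \conj\nu, \overline U)$ --- that is, that imposing vanishing on $U$ in one weight is Serre-dual to imposing vanishing on the complement $\overline U$ in the dual weight. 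The bookkeeping is routine in spirit but must track the shift $\varsigma$ and any irregular cusps carefully.
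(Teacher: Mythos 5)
Your proposal is correct and is precisely the argument the paper intends: Theorem \ref{ThmVanishingDuality} is stated without an explicit proof, but the paper's sketch at the start of Section \ref{SecGeneralizations} --- that $U \sqcup \overline U = \cusps{\Gamma}$ forces the product $\fvan{\nu}{k}{m}{U} \cdot \fvan{\conj\nu}{2-k}{n}{\overline U}$ to vanish at every cusp other than $\cusp{\infty}$, after which the pairing collapses exactly as in \eqref{pairingeq} --- is exactly your proof. Your identification of the index-partition identity generalizing \eqref{Equation: IndexDef2} as the one piece of bookkeeping inherited from the ``totally analogous'' construction (and your sanity check of it in the extreme cases $U = \emptyset$ and $U = \cusps{\Gamma}$) matches the paper's implicit route, so nothing essential is missing.
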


In particular, $\parent{\fvan \nu k m U, \fvan {\conj \nu} {2-k} n {\conj U}}_{m, n}$ constitutes a modular grid.

\subsection{Duality at other cusps}\label{SubsecOtherCusps}

In Section \ref{SecDualityWHMF} we demonstrated that the forms $\parent{\finf \nu k m}_{m}$ are dual to the forms $\parent{\ginf {\conj \nu} {2-k} n}_{n}$. Now fix $\gamma \in \widehat \Gamma$. Recall that $\Gamma^\gamma = \gamma\inv \Gamma \gamma$ and $\nu^\gamma(\mu) = \nu(\gamma \mu \gamma\inv)$.  It is natural to ask whether the forms $\parent{\finf \nu k m |_k \gamma}_{m} \subseteq M_k^!(\Gamma^\gamma, \nu^\gamma)$ and $\parent{\ginf {\conj \nu} {2-k} n |_k \gamma}_{n} \subseteq M_k^!(\Gamma^\gamma, \nu^\gamma)$ still exhibit coefficient duality with each other. Unsurprisingly, the answer is no: these forms have poles only at $\gamma\inv \infty$, and exhibit coefficient duality at the cusp $\gamma\inv \infty$, but generally fail to exhibit duality \textit{at $\infty$}. However, we may ask the following:

\begin{question}\label{q1}
Are there sets of weakly holomorphic forms that are dual to the sets $\parent{\finf \nu k m |_k \gamma}_{m}$ and $\parent{\ginf {\conj \nu} {2-k} n |_k \gamma}_{n}$?
\end{question}

Alternatively, we could ask whether a basis for the space of forms which are modular for $\Gamma$ with poles only at the cusp $\gamma \infty$ exhibits coefficient duality with any other sequence of weakly holomorphic modular forms. A basis for this space is given by $\parent{\finf {\nu^\gamma} k m |_k \gamma\inv}_{m}$ indexed by $\tI {\nu^\gamma} k$. If we add the condition that the forms vanish at every other cusp, a basis is given by $\parent{\ginf {\nu^\gamma} k n |_k \gamma\inv}_{n}$ indexed by $\tJ {\nu^\gamma} k$. Thus, the question of coefficient duality for these spaces is essentially the same as Question \ref{q1}, with $\gamma$ replaced by $\gamma\inv$ to avoid triple superscripts.

The answer to Question \ref{q1} is yes: intuitively, a sequence of forms with poles at $\gamma \infty$ should be dual to a sequence of forms with poles at $\gamma\inv \infty$, but the fact that we are expanding at $\infty$ complicates this duality somewhat. In this subsection we construct the forms dual to those described in Question \ref{q1}. In fact, we will prove a slightly more general result (see Theorem \ref{ThmDualityPolesAnywhere}).

For $\gamma \in \widehat{\Gamma}$, define
\begin{align*}
	\Ialpha \nu k \gamma = \I {\nu^{\gamma}} k \ \ \ \ \textrm{ and } \ \ \ \ \Jalpha \nu k \gamma = \J {\nu^{\gamma}} k.
\end{align*}
Similarly, define
\begin{align*}
	\Fgamma \nu k m \gamma = \Finf {\nu^{\gamma}} k m |_{k} \gamma\inv \ \ \ \ \textrm{ and } \ \ \ \ \Ggamma \nu k n \gamma = \Ginf {\nu^{\gamma}} k n |_k \gamma\inv.
\end{align*}
By construction, the collection of forms $\parent{\Fgamma \nu k m \gamma}_{m \in \Ialpha \nu k \gamma}$ is a basis for $M_k(\Gamma, \nu)$ which is row-reduced at $\cusp {\gamma \infty}$. Likewise, the collection of forms $\parent{\Ggamma \nu k n \gamma}_{n \in \Jalpha \nu k \gamma}$ is a basis for $\spcG k (\Gamma, \nu)$ which is row-reduced at $\cusp {\gamma \infty}$.

For $m \in \Ialpha \nu k \gamma$ we write
\begin{align*}
	\Fgamma \nu k m \gamma (z) &= \sum\limits_{n \in \Znu \nu _{\geq 0}} \Agamma \nu k m n \gamma q^n + \begin{cases}
		\nu(\errormatrix) \e^{2 \pi \i r m} q^m & \ \text{if} \ \gamma = \pm \errormatrix \smatrix 1r01 \ \text{with} \ \errormatrix \in \Gamma \ \text{and} \ r\in \Q, \\
		0 & \ \text{otherwise.}
	\end{cases}
	\intertext{Likewise, for $n \in \Jalpha \nu k \gamma$ we write }
	\Ggamma \nu k n \gamma (z) &= \sum\limits_{m \in \Znu \nu _{> 0}} \Bgamma \nu k nm \gamma q^m + \begin{cases}
		\nu(\errormatrix) \e^{2 \pi \i r n} q^n & \ \text{if} \ \gamma = \pm \errormatrix \smatrix 1r01  \ \text{with} \ \errormatrix \in \Gamma \ \text{and} \ r\in \Q, \\
		0 & \text{ otherwise.}
	\end{cases}
\end{align*}

\begin{proposition}\label{FGcusps}
Assume the notation above. Let $\gamma, \gamma\prm \in \widehat{\Gamma}$ be any two matrices with $\cusp{\gamma \infty} = \cusp {\gamma\prm \infty}$. Then
\[\Fgamma \nu k m \gamma=\nu(\mu)\e^{2\pi\i r m}\Fgamma \nu k m {\gamma\prm} \ \ \text{and} \ \
\Ggamma \nu k n \gamma=\nu(\mu)\e^{2\pi\i r n}\Ggamma \nu k n {\gamma\prm}\]
where $\gamma\prm = \pm \errormatrix \gamma \smatrix 1r01$ with $\errormatrix \in \Gamma$ and $r \in \Q$.
\end{proposition}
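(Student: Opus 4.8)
The plan is to prove the two identities by comparing Fourier expansions at the common cusp $\cusp{\gamma\infty} = \cusp{\gamma\prm\infty}$. The key principle is that a nonzero modular form is determined by its expansion at a single cusp, combined with the uniqueness of the row-reduced basis element $\Finf{\nu^{\gamma\prm}}k m$ of $M_k(\Gamma^{\gamma\prm}, \nu^{\gamma\prm})$. I give the argument for $\Fgamma\nu k m\gamma$; the argument for $\Ggamma\nu k n\gamma$ is identical, with $M_k(\Gamma,\nu)$ replaced by $\spcG k(\Gamma,\nu)$ and $\Finf{\nu^\gamma}k m$ replaced by $\Ginf{\nu^\gamma}k n$. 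First I would record the two facts furnished by the construction: $\Fgamma\nu k m\gamma = \Finf{\nu^\gamma}k m \big|_k \gamma\inv$ lies in $M_k(\Gamma,\nu)$, and its expansion at $\cusp{\gamma\infty}$, namely $\Fgamma\nu k m\gamma\big|_k\gamma$, is the row-reduced form $\Finf{\nu^\gamma}k m = q^m + \sum_{m<n\not\in\I{\nu^\gamma}k}\Ainf{\nu^\gamma}k mn q^n$.

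Next I would expand $\Fgamma\nu k m\gamma$ at $\cusp{\gamma\prm\infty}$ by slashing with $\gamma\prm$. Since $\Fgamma\nu k m\gamma \in M_k(\Gamma,\nu)$, the form $\Fgamma\nu k m\gamma\big|_k\gamma\prm$ is modular for $\Gamma^{\gamma\prm} = (\gamma\prm)\inv\Gamma\gamma\prm$ with multiplier $\nu^{\gamma\prm}$, and it is holomorphic at every cusp because $\Fgamma\nu k m\gamma$ is. Applying Lemma~\ref{Lemmamatricesforcusp} with $\alpha = \gamma\prm$, $\beta = \gamma$, and $\gamma\prm = \pm\errormatrix\gamma\smatrix 1r01$, the coefficient of $q^n$ in $\Fgamma\nu k m\gamma\big|_k\gamma\prm$ is $\nu(\errormatrix)\e^{2\pi\i rn}$ times the coefficient of $q^n$ in $\Fgamma\nu k m\gamma\big|_k\gamma = \Finf{\nu^\gamma}k m$, so that
\[
\Fgamma\nu k m\gamma\big|_k\gamma\prm = \nu(\errormatrix)\e^{2\pi\i rm}q^m + \sum_{\substack{m<n\\ n\not\in\I{\nu^\gamma}k}}\nu(\errormatrix)\e^{2\pi\i rn}\Ainf{\nu^\gamma}k mn q^n.
\]
Dividing by the leading factor, I set $H = \nu(\errormatrix)\inv\e^{-2\pi\i rm}\,\Fgamma\nu k m\gamma\big|_k\gamma\prm$, a holomorphic modular form for $\Gamma^{\gamma\prm}$ and $\nu^{\gamma\prm}$ whose expansion at $\infty$ has leading coefficient $1$ at $q^m$ and is supported off $\I{\nu^\gamma}k$.

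To finish, I would invoke uniqueness of the reduced basis, for which I need $\I{\nu^{\gamma\prm}}k = \I{\nu^\gamma}k$. This holds because $\Gamma^{\gamma\prm} = \smatrix 1r01\inv\Gamma^\gamma\smatrix 1r01$ differs from $\Gamma^\gamma$ by conjugation by a translation, and $\nu^{\errormatrix\inv} = \nu$ for $\errormatrix\in\Gamma$ (slashing by an element of $\Gamma$ only rescales a form and so does not change its multiplier); hence slashing by $\smatrix 1r01$ is an isomorphism $M_k(\Gamma^\gamma,\nu^\gamma)\to M_k(\Gamma^{\gamma\prm},\nu^{\gamma\prm})$ that scales the coefficient of $q^n$ by $\e^{2\pi\i rn}$ and therefore preserves orders at $\infty$, forcing the index sets to agree. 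Consequently $H$ is a holomorphic form for $\Gamma^{\gamma\prm},\nu^{\gamma\prm}$ with leading term $q^m$ and vanishing coefficients at every $n\in\I{\nu^{\gamma\prm}}k$, so by uniqueness $H = \Finf{\nu^{\gamma\prm}}k m = \Fgamma\nu k m{\gamma\prm}\big|_k\gamma\prm$. Since slashing by $\gamma\prm$ is invertible, this gives $\nu(\errormatrix)\inv\e^{-2\pi\i rm}\Fgamma\nu k m\gamma = \Fgamma\nu k m{\gamma\prm}$, which is exactly the claimed identity. The main obstacle is precisely confirming that the row-reduced shape survives the transport from $\gamma$ to $\gamma\prm$, i.e. the equality $\I{\nu^{\gamma\prm}}k = \I{\nu^\gamma}k$ together with the fact that the scaling factors $\nu(\errormatrix)\e^{2\pi\i rn}$ act diagonally in $n$ and so do not mix orders; the delicate multiplier and root-of-unity bookkeeping for half-integral $k$ and at irregular cusps is already packaged into the clean statement of Lemma~\ref{Lemmamatricesforcusp}, so no separate sign analysis is required.
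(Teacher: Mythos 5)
Your proof is correct, and it reaches the identity by a different organization than the paper's. The paper's proof is a four-line direct computation with the slash action: it inserts the identity to write $\Fgamma \nu k m \gamma = \Fgamma \nu k m \gamma|_k\gamma\prm\,{\gamma\prm}\inv$, pulls out the factor $\nu(\errormatrix)$ from the modularity of $\Finf {\nu^\gamma} k m$ under the conjugated element of $\Gamma^\gamma$, and then asserts without further comment the key middle step $\Finf {\nu^\gamma} k m|_k\smatrix 1r01 = \e^{2\pi\i r m}\Finf {\nu^{\gamma\prm}} k m$. What you do differently is to transport everything to the common cusp, compare Fourier coefficients via Lemma \ref{Lemmamatricesforcusp}, and conclude by uniqueness of the row-reduced basis of $M_k(\Gamma^{\gamma\prm}, \nu^{\gamma\prm})$; in particular you prove the pivot-set equality $\I{\nu^{\gamma\prm}}k = \I{\nu^\gamma}k$, which is precisely the fact underlying the paper's asserted middle step (the paper only makes the analogous observation later, inside the proof of Proposition \ref{Proposition: Second matrix determines cusp}, where it cites Lemma \ref{Lemmamatricesforcusp} and this very proposition). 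So your route is slightly longer but self-justifying: the paper buys brevity by leaving the reader to see that slashing by a translation acts diagonally on Fourier coefficients, preserves the reduced-echelon shape, and therefore carries one canonical basis to the other, while you make the diagonal action, the preservation of orders at $\infty$, and the final uniqueness argument explicit, including the needed identification $\nu^{\errormatrix} = \nu$ for $\errormatrix \in \Gamma$. Both arguments share the same convention-level reliance on treating $|_k$ as a clean right action in half-integral weight (i.e., on the lifts to $\mathfrak{G}_k$ and the root-of-unity bookkeeping packaged into Lemma \ref{Lemmamatricesforcusp}), so your closing remark on that point matches the paper's own usage, and your treatment of the cuspidal case by replacing $M_k$ with $\spcG k(\Gamma,\nu)$ and $\Finf \nu k m$ with $\Ginf \nu k n$ is exactly parallel to the paper's closing sentence.
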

\begin{proof}
We compute
\begin{eqnarray*}
\Fgamma \nu k m \gamma &=& \Fgamma \nu k m \gamma|_k\gamma\prm {\gamma\prm}\inv\\
&=&\nu(\errormatrix) \Finf {\nu^{\gamma}} k m|_k\smatrix 1r01 {\gamma\prm}\inv\\
&=&\nu(\errormatrix) \e^{2\pi\i r m}\Finf {\nu^{\gamma\prm}} k m|_k {\gamma\prm}\inv\\
&=&\nu(\errormatrix) \e^{2\pi\i r m}\Fgamma \nu k m {\gamma\prm}.
\end{eqnarray*}
The argument for $\Ggamma \nu k n \gamma$ is similar.
\end{proof}
We now construct sequences of weakly holomorphic modular forms that are indexed by poles at one cusp, with row-reduction at a (possibly) different cusp.
In analogy with $\Ialpha \nu k \gamma$ and $\Jalpha \nu k \gamma$ above, we define
\begin{align*}
	\tIalpha \nu k \alpha \gamma &= \begin{cases}
	\tI {\nu^\alpha} k & \text{if } \cusp {\alpha \infty} = \cusp {\gamma \infty}, \\
	\Zrho {\conj \nu} {\cusp{\alpha \infty}, > 0} & \text{if } \cusp {\alpha \infty} \neq \cusp {\gamma \infty},
	\end{cases} \\
	\tJalpha \nu k \alpha \gamma &= \begin{cases}
	\tJ {\nu^{\alpha}} k & \text{if } \cusp {\alpha \infty} = \cusp {\gamma \infty}, \\
	\Zrho {\conj \nu} {\cusp{\alpha \infty}, \geq 0} & \text{if } \cusp {\alpha \infty} \neq \cusp {\gamma \infty}.
	\end{cases}
\end{align*}
Note that for all $\alpha, \gamma \in \widehat{\Gamma}$, we have
\begin{align*}
	\parent{-\tIalpha { \nu} {k} \alpha \gamma} \sqcup \tJalpha {\conj\nu} {2-k} \alpha \gamma =
	\Zrho \nu {\cusp{\alpha \infty}} =
	\tIalpha {\conj\nu} {2-k} \alpha \gamma \sqcup \parent{-\tJalpha { \nu} {k} \alpha \gamma}.
\end{align*}

For $m \in -\Zrho \nu {\cusp {\alpha \infty}}$, let $\qalpha \nu k m \alpha = \parent{\qalphapart \nu k m \alpha \lambda}_\lambda \in \C((q))_{\widehat\Gamma,\nu}$ be given by
\begin{align}
\qalphapart \nu k m \alpha \indexmatrix (q) &= \begin{cases}
\nu(\errormatrix) \e^{-2\pi \i r m} q^{-m} & \ \text{if} \ \indexmatrix = \pm \errormatrix \alpha \smatrix 1r01  \ \text{with} \ \errormatrix \in \Gamma \ \text{and} \ r\in \Q, \\
0 & \text{ otherwise.}
\end{cases} \nonumber \\
\intertext{For $m > 0$ in $\tIalpha \nu k \alpha \gamma$, we define $\principalfalpha \nu k m \alpha \gamma = \qalpha \nu k m \alpha - \falphaerror \nu k m \alpha \gamma$ where $\falphaerror \nu k m \alpha \gamma = \parent{\falphaerrorpart \nu k m \alpha \gamma \lambda}_\lambda \in \C((q))_{\widehat\Gamma,\nu}$ is given by}
\falphaerrorpart \nu k m \alpha \gamma \gamma (q) &= \frac{\cuspwidth \Gamma {\cusp {\alpha \infty}}}{\cuspwidth \Gamma {\gamma \infty}} \sum\limits_{n \in \Jalpha {\conj \nu} {2-k} {\cusp {\gamma \infty}}} \Bgamma {\nu^\alpha} k n m {\alpha\inv \gamma} q^{-n}, \label{Defn: fmu} \\
\falphaerrorpart \nu k m \alpha \gamma \indexmatrix (q) &= \begin{cases}
\nu(\errormatrix) \falphaerrorpart \nu k m \alpha \gamma \gamma (\e^{2 \pi \i r} q) & \ \text{if} \ \indexmatrix = \pm \errormatrix \gamma \smatrix 1r01  \ \text{with} \ \errormatrix \in \Gamma \ \text{and} \ r\in \Q, \\
0 & \ \text{otherwise.}
\end{cases} \nonumber
 \\
\intertext{For $n \geq 0$ in $\tJalpha \nu k \alpha \gamma$, we define $\principalgalpha \nu k n \alpha \gamma = \qalpha \nu k n \alpha - \galphaerror \nu k n \alpha \gamma$ where $\galphaerror \nu k n \alpha \gamma = \parent{\galphaerrorpart \nu k n \alpha \gamma \lambda}_\lambda \in \C((q))_{\widehat\Gamma,\nu}$ is given by}
\galphaerrorpart \nu k n \alpha \gamma \gamma &= \frac{\cuspwidth \Gamma {\cusp {\alpha \infty}}}{\cuspwidth \Gamma {\gamma \infty}} \sum\limits_{m \in \Ialpha {\conj \nu} {2-k} {\cusp {\gamma \infty}}} \Agamma {\nu^\alpha} k m n {\alpha\inv \gamma} q^{-m}, \nonumber \\
\galphaerrorpart \nu k n \alpha \gamma \indexmatrix &= \begin{cases}
\nu(\errormatrix) \galphaerrorpart \nu k n \alpha \gamma \gamma (\e^{2 \pi \i r} q) & \ \text{if} \ \lambda = \pm \errormatrix \gamma \smatrix 1r01 \ \text{with} \ \errormatrix \in \Gamma \ \text{and} \ r\in \Q, \\
0 & \ \text{otherwise.}
\end{cases} \nonumber
\end{align}

\begin{lemma}\label{Lemma: fmu and gmu exist}
	Fix $m > 0$ in $\tIalpha \nu k \alpha \gamma$ and $n \geq 0$ in $\tJalpha \nu k \alpha \gamma$. There exists $\mathbf{x} \in \C((q))_{\widehat\Gamma,\nu}$ such that for each $h \in M_{2-k}(\Gamma, \conj \nu)$, we have
	\begin{align*}
		\set{\principalfalpha \nu k m \alpha \gamma + \mathbf{x}, h}_\Gamma &= 0 \ \text{and} \\
		\set{\principalgalpha \nu k n \alpha \gamma, h}_\Gamma &= 0.
	\end{align*}
\end{lemma}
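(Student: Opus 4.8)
The plan is to transcribe the proof of Lemma~\ref{Lemma: finf and ginf exist} to the two-cusp setting, the one genuinely new feature being that the poles of the candidate series and those of its error term now live at the possibly distinct cusps $\cusp{\alpha\infty}$ and $\cusp{\gamma\infty}$. As in that lemma, bilinearity of the modular pairing reduces both claims to the case where $h$ ranges over a spanning set of $M_{2-k}(\Gamma,\conj\nu)$, and I would split $M_{2-k}(\Gamma,\conj\nu) = S_{2-k}(\Gamma,\conj\nu)\oplus\spceis{2-k}\parent{\Gamma,\conj\nu}$ and treat the two summands separately.

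For the Eisenstein summand I would reuse the dimension identity established in the proof of Lemma~\ref{Lemma: finf and ginf exist}, which furnishes a solution $\mathbf x\in\bbC_{\widehat\Gamma,\nu}\subseteq\C((q))_{\widehat\Gamma,\nu}$ to the finite linear system $\set{\mathbf x,E_j}_\Gamma = -\set{\principalfalpha{\nu}{k}{m}{\alpha}{\gamma},E_j}_\Gamma$ as $E_j$ runs over a basis of $\spceis{2-k}\parent{\Gamma,\conj\nu}$. Since $\mathbf x$ may be taken with constant components, it has no principal part, so it pairs trivially against cusp forms (a cusp form contributes to $\set{\mathbf x,\cdot}_\Gamma$ only through the principal part of $\mathbf x$, which is absent) and it does not perturb the prescribed poles fed into Theorem~\ref{ThmFunkePairingFormExistence}. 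For $\principalgalpha{\nu}{k}{n}{\alpha}{\gamma}$ no correction term is needed: exactly as in the original lemma, one pairs instead against the holomorphic basis $\parent{\Fgamma{\conj\nu}{2-k}{m}{\gamma}}_m$ of $M_{2-k}(\Gamma,\conj\nu)$, so the constant-term obstruction never arises and $\mathbf x$ can be dispensed with.

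The substance of the argument is the vanishing against $S_{2-k}(\Gamma,\conj\nu)$, for which I would pair $\principalfalpha{\nu}{k}{m}{\alpha}{\gamma}$ against the row-reduced cusp-form basis $\parent{\Ggamma{\conj\nu}{2-k}{n}{\gamma}}_n$ at $\cusp{\gamma\infty}$. Because a cusp form vanishes at every cusp, only the principal parts of the candidate contribute in \eqref{Equation: Formal modular pairing}, and these sit at two cusps: the $q^{-m}$ pole of $\qalpha{\nu}{k}{m}{\alpha}$ at $\cusp{\alpha\infty}$, weighted by $\cuspwidth{\Gamma}{\cusp{\alpha\infty}}/\widehat w$, and the error term $\falphaerror{\nu}{k}{m}{\alpha}{\gamma}$ at $\cusp{\gamma\infty}$, weighted by $\cuspwidth{\Gamma}{\gamma\infty}/\widehat w$. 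The width ratio $\cuspwidth{\Gamma}{\cusp{\alpha\infty}}/\cuspwidth{\Gamma}{\gamma\infty}$ built into \eqref{Defn: fmu} is precisely what reconciles these two weights; once it is absorbed, the definition of $\falphaerror{\nu}{k}{m}{\alpha}{\gamma}$ in terms of the coefficients $\Bgamma{\nu^\alpha}{k}{n}{m}{\alpha\inv\gamma}$ cancels the pole contribution from $\cusp{\alpha\infty}$ against the error contribution from $\cusp{\gamma\infty}$, in the same way that the two $B$-terms canceled in Lemma~\ref{Lemma: finf and ginf exist}, while the remaining cross-terms vanish by the row-reduction of $\parent{\Ggamma{\conj\nu}{2-k}{n}{\gamma}}_n$ at $\cusp{\gamma\infty}$. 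The argument for $\principalgalpha{\nu}{k}{n}{\alpha}{\gamma}$ is the mirror image, using the $A$-coefficients in place of the $B$-coefficients.

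The main obstacle is bookkeeping rather than conceptual difficulty. In the original lemma every expansion took place at $\infty$, so the coefficients matched on the nose; here I must relate the Fourier data of the candidate and of $\Ggamma{\conj\nu}{2-k}{n}{\gamma}$ at the two cusps $\cusp{\alpha\infty}$ and $\cusp{\gamma\infty}$ to the intrinsically defined row-reduced coefficients appearing in \eqref{Defn: fmu}, and track the roots of unity $\nu(\errormatrix)\e^{2\pi\i r n}$ produced whenever one changes coset representatives for a fixed cusp. Controlling these consistently across both cusps, by means of Lemma~\ref{Lemmamatricesforcusp} and Proposition~\ref{FGcusps}, is the delicate step. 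Once the two principal-part pairings are shown to vanish, Theorem~\ref{ThmPairing} (for the Eisenstein pairing already handled by $\mathbf x$) together with Theorem~\ref{ThmFunkePairingFormExistence} promotes the formal candidates to genuine weakly holomorphic modular forms, which is exactly what the lemma asserts.
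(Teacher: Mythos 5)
Your proposal matches the paper's proof essentially step for step: the same reduction via bilinearity and the splitting $M_{2-k}(\Gamma,\conj\nu)=S_{2-k}(\Gamma,\conj\nu)\oplus\spceis{2-k}\parent{\Gamma,\conj\nu}$, the same constant correction $\mathbf x$ solving the finite linear system against an Eisenstein basis exactly as in Lemma~\ref{Lemma: finf and ginf exist}, and the same pairing computation against the row-reduced bases $\parent{\Ggamma{\conj\nu}{2-k}{n}{\gamma}}_n$ (respectively $\parent{\Fgamma{\conj\nu}{2-k}{m}{\gamma}}_m$ for the $g$-case, with no Eisenstein correction needed), in which the width ratio built into \eqref{Defn: fmu} cancels the $\cusp{\alpha\infty}$ pole contribution against the $\cusp{\gamma\infty}$ error contribution and the cross-terms die by row-reduction. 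The only cosmetic remark is that the lemma asserts just the vanishing of the pairings, so your closing invocation of Theorem~\ref{ThmFunkePairingFormExistence} belongs to the construction of $\falpha \nu k m \alpha \gamma$ and $\galpha \nu k n \alpha \gamma$ that follows the lemma rather than to its proof.
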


\begin{proof}
	We proceed by analogy with Lemma \ref{Lemma: finf and ginf exist}. We first prove the claim for $\principalfalpha \nu k m \alpha \gamma$. It suffices to prove the claim for $h$ in a spanning set for $M_{2-k}(\Gamma, \conj \nu)$.
	
	The existence of $\mathbf x$ so that
	\[\set{ \mathbf{x}, E}_\Gamma = -\set{\principalfalpha \nu k m \alpha \gamma ,E}_\Gamma, \]
	for each Eisenstein series $E\in \spceis {2-k}(\Gamma ,\conj\nu)$ follows exactly as in the proof of Lemma \ref{Lemma: finf and ginf exist}.
		
	 The set $\parent{\Ggamma {\conj \nu}{2-k}n \gamma }_{n \in \J {\conj \nu}{2-k}}$ is a basis for $S_{2-k}(\Gamma, \conj\nu)$. Observe that
	\[
		\set{\principalfalpha \nu k m \alpha \gamma+ \mathbf{x}, \Ggamma {\conj \nu}{2-k}n \gamma}_\Gamma = \set{\principalfalpha \nu k m \alpha \gamma,\Ggamma {\conj \nu}{2-k}n \gamma}_\Gamma
	\]
	and the only contribution can come from the cusps $\cusp {\alpha\infty}$ and $\cusp {\gamma\infty}$. By \eqref{Defn: fmu}, we have
	\[
		\aalpha {\nu^\gamma} k m {-\ell} {\gamma\inv \alpha} I = - \frac{\cuspwidth \Gamma {\alpha \infty}}{\cuspwidth \Gamma {\gamma \infty}} \Bgamma {\conj \nu^\alpha} {2-k} \ell m {\alpha\inv \gamma}
	\] for $\ell \in \Jalpha {\conj \nu} {2-k} \gamma$. Thus, for any $\mathbf{x} \in \bbC_{\widehat\Gamma,\nu}$ we compute
	\begin{align*}
		\set{\principalfalpha \nu k m \alpha \gamma + \mathbf{x}, \Ggamma {\conj \nu}{2-k}n \gamma}_\Gamma &=
				\set{\principalfalpha \nu k m \alpha \gamma, \Ggamma {\conj \nu}{2-k}n \gamma}_\Gamma\\
		&= \frac{\cuspwidth \Gamma {\alpha \infty}}{\widehat w}  \Bgamma {\conj \nu^\alpha} {2-k} nm {\alpha\inv \gamma} + \frac{\cuspwidth \Gamma {\gamma \infty}}{\widehat w} \parent{- \frac{\cuspwidth \Gamma {\alpha \infty}}{\cuspwidth \Gamma {\gamma \infty}} \Bgamma {\conj \nu^\alpha} {2-k} nm {\alpha\inv \gamma}} \\
		& \ \ \ \ \ \ \ - \frac{\cuspwidth \Gamma {\gamma \infty}}{\widehat w} \parent{\frac{\cuspwidth \Gamma {\alpha \infty}}{\cuspwidth \Gamma {\gamma \infty}} \sum\limits_{\ell \in \Jalpha {\conj \nu} {2-k} \gamma} \Bgamma {\conj \nu^\alpha} {2-k} \ell m {\alpha\inv \gamma} \Bgamma {\conj \nu^\gamma} {2-k} n {\ell} I} \\
		&=- \frac{\cuspwidth \Gamma {\alpha \infty}}{\widehat w}  \parent{\sum\limits_{\ell \in \Jalpha {\conj \nu} {2-k} \gamma} \Bgamma {\conj \nu^\alpha} {2-k} \ell m {\alpha\inv \gamma} \Bgamma {\conj \nu^\gamma} {2-k} n {\ell} I} \\
		&= 0,
	\end{align*} where the last equality holds because $\Bgamma {\conj \nu^\gamma} {2-k} n {\ell} I = 0$ for $\ell \in \Jalpha {\conj \nu} {2-k} \gamma$.  Therefore the claim holds with $\mathbf x$ chosen as above.
	
	The proof for $\principalgalpha \nu k m \alpha \gamma$ is similar, except that we use $\parent{\Fgamma \nu k m \gamma}_{m \in \I {\conj \nu} {2-k}}$ as our spanning set for $M_{2-k}(\Gamma, \conj \nu),$ and we do not need to separate out the Eisenstein component.
\end{proof}

We now construct projection maps $\widetilde \pi_{\gamma} : M_k^!(\Gamma, \nu) \to M_k^!(\Gamma, \nu)$ and $\widehat \pi_{\gamma} : M_k^!(\Gamma, \nu) \to M_k^!(\Gamma, \nu)$ as follows. Given any form $h \in M^!_k(\Gamma,\nu)$, for each $\lambda \in \widehat{\Gamma}$, write $h^\lambda(z) = \sum\limits_{n \in \Zrho \nu {\cusp {\lambda \infty}}} a^\lambda(n) q^n$.  Then define $\widetilde \pi_{\gamma}(h)$ and $\widehat \pi_{\gamma}(h)$ to be the forms obtained by reducing $h$ against the forms $\Fgamma \nu k m {\gamma}(z)$ and $\Ggamma \nu k m {\gamma} (z)$ at the cusp $\cusp {\gamma \infty}$, so that
\begin{align*}
\widetilde \pi_{\gamma} (h) \ &= \ h - \sum_{m \in \Ialpha {\nu} {k} {\gamma}} a^{\gamma}(m) \Fgamma \nu k m {\gamma}, \ \text{and} \\
\widehat \pi_{\gamma} (h) \ &= \ h - \sum_{n \in \Jalpha {\nu} {k} {\gamma}}a^{\gamma}(n) \Ggamma \nu k n {\gamma}.
\end{align*}
In particular, $\widetilde \pi_I = \widetilde \pi$ and $\widehat \pi_I = \widehat \pi$.

Observe that if $f \in M_k(\Gamma, \nu)$ then for any $\gamma \in \widehat{\Gamma}$, we have $\widetilde{\pi}_\gamma(f) = 0$. Likewise if $g \in \spcG k(\Gamma, \nu),$ then for any $\gamma \in \widehat{\Gamma}$, we have $\widehat{\pi}_\gamma(g) = 0$.

By Lemma \ref{Lemma: fmu and gmu exist} and Theorem \ref{ThmFunkePairingFormExistence}, for each $m > 0$ in $\tIalpha \nu k\alpha \gamma$ there exists at least one form $f \in M_k^!(\Gamma, \nu)$ so that $f|_k\lambda = \principalfalphapart \nu k m \alpha \gamma \lambda + O(1)$ for each $\lambda\in \widehat\Gamma$. Let $\falpha \nu k m \alpha \gamma$ be the unique such form with
\begin{align*}
	\falpha \nu k m \alpha \gamma &= \widetilde \pi_\gamma\parent{\falpha \nu k m \alpha \gamma}.
\end{align*}
If $\cusp{\alpha \infty} = \cusp{\gamma \infty}$, then for $m \leq 0$ in $\tIalpha \nu k\alpha \gamma$ we set $\falpha \nu k m \alpha \gamma = \Fgamma \nu k {-m} \alpha$.

Similarly, for each $n \geq 0$ in $\tJalpha \nu k\alpha \gamma$, there exists at least one form $g \in M_k^!(\Gamma, \nu)$ such that $g|k\lambda =\principalgalphapart \nu k n \alpha \gamma \lambda+o(1)$ for each $\lambda\in \widehat \Gamma$. Let $\galpha \nu k n \alpha \gamma$ be the unique such form with
\begin{align*}
\galpha \nu k n \alpha \gamma &= \widehat{\pi}_\gamma \parent{\galpha \nu k n \alpha \gamma}.
\end{align*}
If $\cusp{\alpha \infty} = \cusp{\gamma \infty}$, then for $n < 0$ in $\tJalpha \nu k\alpha \gamma$ we set $\galpha \nu k n \alpha \gamma = \Ggamma \nu k {-n} \alpha$.

The following proposition shows that the choice of a matrix $\gamma$ with $\gamma \infty = \rho$ does not affect the row reduction at the cusp $\rho$.

\begin{prop}\label{Proposition: Second matrix determines cusp}
	Assume the notation above. Let $\gamma, \gamma\prm \in \widehat{\Gamma}$ be any two matrices with $\cusp{\gamma \infty} = \cusp {\gamma\prm \infty}$. Then \[
	\widetilde{\pi}_\gamma = \widetilde{\pi}_{\gamma\prm} \ \ \text{and} \ \ \widehat{\pi}_\gamma = \widehat{\pi}_{\gamma\prm}.
	\] Furthermore, for any $\alpha \in \widehat{\Gamma}$, $m \in \tIalpha \nu k \alpha \gamma$, $n \in \tJalpha \nu k \alpha \gamma$, we have \[
	\falpha \nu k m \alpha \gamma = \falpha \nu k m \alpha {\gamma\prm} \ \ \text{and} \ \ \galpha \nu k n \alpha \gamma = \galpha \nu k n \alpha {\gamma\prm}.
	\]
\end{prop}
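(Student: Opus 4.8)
The plan is to first normalize the two matrices and then treat the two assertions in turn, reducing everything to Proposition~\ref{FGcusps} and Lemma~\ref{Lemmamatricesforcusp}. By Lemma~\ref{Lemmamatricesforcusp}, since $\cusp{\gamma \infty} = \cusp{\gamma\prm \infty}$ we may write $\gamma\prm = \errormatrix \gamma \smatrix 1r01$ with $\errormatrix \in \Gamma$ and $r \in \Q$ (absorbing a possible sign into $\errormatrix$). Because the slash action by $\smatrix 1r01$ merely shifts $z \mapsto z + r$ and so rescales each Fourier coefficient by a root of unity without changing which powers of $q$ occur, the row-reduced structures at $\cusp{\gamma \infty}$ and $\cusp{\gamma\prm \infty}$ have identical support; in particular $\Ialpha \nu k \gamma = \Ialpha \nu k {\gamma\prm}$, $\Jalpha \nu k \gamma = \Jalpha \nu k {\gamma\prm}$, and the index sets $\tIalpha \nu k \alpha \gamma$ and $\tJalpha \nu k \alpha \gamma$ depend on $\gamma$ only through the cusp $\cusp{\gamma \infty}$. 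This compatibility is exactly what is packaged into Proposition~\ref{FGcusps}.

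First I would prove $\widetilde{\pi}_\gamma = \widetilde{\pi}_{\gamma\prm}$; the argument for $\widehat{\pi}$ is identical. Fixing $h \in M_k^!(\Gamma, \nu)$, I compare the two sums defining $\widetilde{\pi}_\gamma(h)$ and $\widetilde{\pi}_{\gamma\prm}(h)$, both of which range over the common index set $\Ialpha \nu k \gamma$. For each such $m$, Proposition~\ref{FGcusps} gives $\Fgamma \nu k m \gamma = \nu(\errormatrix) \e^{2\pi\i r m} \Fgamma \nu k m {\gamma\prm}$, while Lemma~\ref{Lemmamatricesforcusp} applied to $\gamma\prm = \errormatrix \gamma \smatrix 1r01$ gives the matching coefficient relation $a^{\gamma\prm}(m) = \nu(\errormatrix) \e^{2\pi\i r m} a^{\gamma}(m)$, where $a^{\gamma}(m)$ denotes the coefficient of $q^m$ in $h^\gamma$. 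Multiplying these yields $a^{\gamma}(m) \Fgamma \nu k m \gamma = a^{\gamma\prm}(m) \Fgamma \nu k m {\gamma\prm}$ term by term, so the two projections agree on $h$, and hence $\widetilde{\pi}_\gamma = \widetilde{\pi}_{\gamma\prm}$.

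Next I would show that the candidate principal parts agree, that is, $\principalfalpha \nu k m \alpha \gamma = \principalfalpha \nu k m \alpha {\gamma\prm}$ in $\C((q))_{\widehat\Gamma, \nu}$ (and similarly for the $\mathbf g$'s). The term $\qalpha \nu k m \alpha$ depends only on $\alpha$, so it suffices to check $\falphaerror \nu k m \alpha \gamma = \falphaerror \nu k m \alpha {\gamma\prm}$. Both error terms are supported at the single cusp $\cusp{\gamma \infty} = \cusp{\gamma\prm \infty}$, so by the consistency relation defining $\C((q))_{\widehat\Gamma, \nu}$ it suffices to compare their $\gamma\prm$-components. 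Using $\gamma\prm = \errormatrix \gamma \smatrix 1r01$, the consistency rule expresses the $\gamma\prm$-component of $\falphaerror \nu k m \alpha \gamma$ as $\nu(\errormatrix)$ times the $\gamma$-component with $q$ replaced by $\e^{2\pi\i r} q$, which scales the coefficient of $q^{-n}$ by $\e^{-2\pi\i r n}$; the $\gamma\prm$-component of $\falphaerror \nu k m \alpha {\gamma\prm}$ is defined directly. Writing $\delta = \alpha\inv \gamma$ and $\delta\prm = \alpha\inv \gamma\prm$, one checks $\delta\prm = \errormatrix\prm \delta \smatrix 1r01$ with $\errormatrix\prm = \alpha\inv \errormatrix \alpha \in \Gamma^\alpha$, and applying Proposition~\ref{FGcusps} in weight $2-k$ for the group $\Gamma^\alpha$ and multiplier $\conj{\nu^\alpha}$ to the pair $\delta, \delta\prm$ and extracting the coefficient of $q^m$ relates $\Bgamma {\conj{\nu^\alpha}} {2-k} n m {\alpha\inv \gamma\prm}$ to $\Bgamma {\conj{\nu^\alpha}} {2-k} n m {\alpha\inv \gamma}$. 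Since the widths $\cuspwidth \Gamma {\gamma \infty}$ and $\cuspwidth \Gamma {\gamma\prm \infty}$ coincide and the summation ranges agree, matching this against the rescaled $\gamma$-component gives the desired equality of $\gamma\prm$-components.

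The delicate point, which I expect to be the main obstacle, is the multiplier bookkeeping in this last step: the dual-weight application of Proposition~\ref{FGcusps} produces the factor $\conj{\nu^\alpha}(\errormatrix\prm) = \overline{\nu(\errormatrix)}$, so solving for $\Bgamma {\conj{\nu^\alpha}} {2-k} n m {\alpha\inv \gamma\prm}$ introduces $\overline{\nu(\errormatrix)}^{-1}$, which must be reconciled with the factor $\nu(\errormatrix)$ coming from the consistency rule for the weight-$k$ error term. These agree precisely because $\nu$ is a finite multiplier, so $\abs{\nu(\errormatrix)} = 1$ and hence $\overline{\nu(\errormatrix)}^{-1} = \nu(\errormatrix)$; retaining the conjugate multiplier $\conj\nu$ (rather than $\nu$) in the definition of the error term is exactly what makes this close up. With the principal parts and the projection maps both shown independent of the choice of matrix, I would conclude by invoking the uniqueness built into the definitions of $\falpha \nu k m \alpha \gamma$ and $\galpha \nu k n \alpha \gamma$ via Lemma~\ref{Lemma: fmu and gmu exist} and Theorem~\ref{ThmFunkePairingFormExistence}: each is the unique form with the prescribed expansions $\principalfalphapart \nu k m \alpha \gamma \lambda + O(1)$ (respectively $\principalgalphapart \nu k n \alpha \gamma \lambda + o(1)$) at every $\lambda \in \widehat\Gamma$ that is fixed by $\widetilde{\pi}_\gamma$ (respectively $\widehat{\pi}_\gamma$). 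As both the prescribed data and the projection agree for $\gamma$ and $\gamma\prm$, the forms coincide; the remaining cases $m \leq 0$ (respectively $n < 0$), where the form is set equal to $\Fgamma \nu k {-m} \alpha$ (respectively $\Ggamma \nu k {-n} \alpha$), depend only on $\alpha$ and are therefore unaffected.
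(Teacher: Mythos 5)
Your proof is correct, and while your treatment of the first assertion is essentially identical to the paper's (combining Lemma~\ref{Lemmamatricesforcusp} with Proposition~\ref{FGcusps} to cancel the two reduction sums termwise), your proof of the second assertion takes a genuinely different route. The paper sets $h = \falpha \nu k m \alpha \gamma - \falpha \nu k m \alpha {\gamma\prm}$, observes that $h$ can have poles only at $\cusp{\gamma\infty}$ with orders in $\Jalpha {\conj\nu}{2-k}\gamma$, kills those coefficients by pairing $h$ against $\Ggamma {\conj\nu}{2-k}N\gamma$ via Theorem~\ref{ThmPairing}, concludes that $h$ is holomorphic, and finishes with $0 = \widetilde{\pi}_\gamma(h) = \falpha \nu k m \alpha \gamma - \falpha \nu k m \alpha {\gamma\prm}$ using the already-proved equality of projections; notably, it never needs to know that the formal principal parts coincide. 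You instead verify directly that $\principalfalpha \nu k m \alpha \gamma = \principalfalpha \nu k m \alpha {\gamma\prm}$ in $\C((q))_{\widehat\Gamma,\nu}$ --- applying Proposition~\ref{FGcusps} with the conjugated data $\parent{\Gamma^\alpha, \conj{\nu^\alpha}, 2-k}$ to the pair $\alpha\inv\gamma$, $\alpha\inv\gamma\prm$ --- and then invoke the uniqueness built into the definition via $\widetilde{\pi}_\gamma = \widetilde{\pi}_{\gamma\prm}$. Your multiplier bookkeeping checks out: $\conj{\nu^\alpha}\parent{\alpha\inv\errormatrix\alpha} = \conj{\nu(\errormatrix)}$, the widths at the common cusp agree, the rescaling $q \mapsto \e^{2\pi\i r}q$ produces exactly the factor $\e^{-2\pi\i r n}$ on $q^{-n}$ demanded by the consistency rule, and the unitarity $\conj{\nu(\errormatrix)}\inv = \nu(\errormatrix)$ of the finite multiplier closes the loop, so the two error packages have equal $\gamma\prm$-components and, being supported at the single cusp $\cusp{\gamma\infty}$, are equal. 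Your route is more computational but buys something the paper leaves implicit --- that the prescribed principal-part data of \eqref{Defn: fmu} is itself independent of the representative of the cusp --- and it avoids a second appeal to the pairing at this step; the paper's difference-form argument is shorter, leans on machinery already in place, and obtains the coincidence of principal parts as a byproduct rather than a prerequisite. One trivial caution: you cannot always absorb the sign in $\gamma\prm = \pm\errormatrix\gamma\smatrix 1r01$ into $\errormatrix \in \Gamma$, since $-I$ need not lie in $\Gamma$; keep the $\pm$ with the translation factor as in Lemma~\ref{Lemmamatricesforcusp} (this changes nothing downstream, as every relation you use is stated with that convention).
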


\begin{proof}
	Let $f \in M_k^!(\Gamma, \nu)$ be arbitrary, and write $f^\lambda(z) = \sum\limits_n a^\lambda(n) q^n$. Suppose $\cusp {\gamma \infty} = \cusp {\gamma\prm \infty}$; then we may write $\gamma\prm = \pm \errormatrix \gamma \smatrix 1r01$ with $\errormatrix \in \Gamma$ and $r \in \Q$. By Lemma \ref{Lemmamatricesforcusp}, we see $a^{\gamma\prm}(n) = \nu(\errormatrix) \e^{2 \pi \i r n} a^{\gamma}(n)$, and $\Ialpha \nu k \gamma = \Ialpha \nu k {\gamma\prm}$, so by Proposition \ref{FGcusps} we have $\nu(\errormatrix) \e^{2 \pi \i r m} \Fgamma \nu k m {\gamma\prm} = \Fgamma \nu k m {\gamma}$.
	We compute
\begin{align*}
	\widetilde{\pi}_\gamma(f) - \widetilde{\pi}_{\gamma\prm}(f) &= \parent{f - \sum_{m \in \Ialpha {\nu} {k} {\gamma}} a^{\gamma}(m) \Fgamma \nu k m {\gamma}} - \parent{f - \sum_{m \in \Ialpha {\nu} {k} {\gamma\prm}} a^{\gamma\prm}(m) \Fgamma \nu k m {\gamma\prm}} \\
	&= \sum_{m \in \Ialpha {\nu} {k} {\gamma\prm}} a^{\gamma\prm}(m) \Fgamma \nu k m {\gamma\prm} - \sum_{m \in \Ialpha {\nu} {k} {\gamma}} a^{\gamma}(m) \Fgamma \nu k m {\gamma} \\
	&= \sum_{m \in \Ialpha {\nu} {k} {\gamma}} \parent{\nu(\errormatrix) \e^{2 \pi \i r m} a^{\gamma}(m) \Fgamma \nu k m {\gamma\prm} - a^{\gamma}(m) \Fgamma \nu k m {\gamma}} \\
	&= \sum_{m \in \Ialpha {\nu} {k} {\gamma}} \parent{a^{\gamma}(m) \Fgamma \nu k m {\gamma} - a^{\gamma}(m) \Fgamma \nu k m {\gamma}} \\
	&= 0.
\end{align*}
The proof for $\widehat{\pi}_\gamma$ and $\widehat{\pi}_{\gamma\prm}$ is totally analogous. This proves the first claim.

Let $h = \falpha \nu k m \alpha \gamma - \falpha \nu k m \alpha {\gamma\prm}$, and write $h^\lambda (z) = \sum\limits_n a^\lambda(n) q^n$. By construction, $h$ has no poles except possibly for poles of order $N \in \Jalpha {\conj \nu} {2-k} \gamma$ at the cusp $\cusp \gamma\infty$. But for $N \in \Jalpha {\conj\nu} {2-k} \gamma$, we see that
\[
	0 = \set{h, \Ggamma {\conj \nu} {2-k} N \gamma}_\Gamma = \frac{\cuspwidth \Gamma {\gamma \infty}}{\widehat w} a^\gamma(N).
\]
Thus $h$ has no poles at any cusp, and so $h$ is a holomorphic form. Then
\[
	0 = \widetilde{\pi}_\gamma(h) = \widetilde{\pi}_\gamma(\falpha \nu k m \alpha \gamma) - \widetilde{\pi}_{\gamma\prm}(\falpha \nu k m \alpha {\gamma\prm}) = \falpha \nu k m \alpha \gamma - \falpha \nu k m \alpha {\gamma\prm},
\]
which proves the equality. A similar argument shows that $\galpha \nu k n \alpha \gamma = \galpha \nu k n \alpha {\gamma\prm}$.
\end{proof}

Observe that for $\lambda \in \widehat \Gamma$ we have
\begin{align}\label{Eqn_fcusp1}
	\falpha \nu k m \alpha \gamma |_k \lambda = \falpha {\nu^\lambda} k m {\lambda\inv \alpha} {\lambda \inv \gamma} \ \ &\text{and} \ \
	\galpha \nu k m \alpha \gamma |_k \lambda = \galpha {\nu^\lambda} k m {\lambda\inv \alpha} {\lambda\inv \gamma}.
\end{align}
In particular, we see that
\begin{align*}
	\finf \nu k m |_k \gamma = \falpha {\nu^\gamma} k m {\gamma\inv} {\gamma\inv} \ \ &\text{and} \ \
	\ginf \nu k m |_k \gamma = \galpha {\nu^\gamma} k m {\gamma\inv} {\gamma\inv},
\end{align*}
so Theorem \ref{ThmDualityPolesAnywhere} below fulfills the promise made at the beginning of this subsection.

\begin{theorem}\label{ThmDualityPolesAnywhere}
	Let $\Gamma $ be a commensurable subgroup, and let $\widehat{\Gamma}$ be a maximal commensurable subgroup containing $\Gamma$. Let $k\in \frac 1 2 \Z,$ and let $\nu$ be any consistent weight $k$ multiplier. Finally, let $\alpha,$ $\beta$, and $\gamma$ be elements of $\widehat{\Gamma}$. The coefficients of the forms $\parent{\falpha {\nu^\alpha} {k} m {\alpha\inv \beta} {\alpha\inv \gamma} }_m$ and $\parent{\galpha {\conj \nu^\beta} {2-k} n {\beta\inv \alpha} {\beta\inv \gamma}}_n$ satisfy
	\[
		\cuspwidth \Gamma {\alpha \infty} \aalpha {\nu^\alpha} k m n {\alpha\inv \beta} {\alpha\inv \gamma}
		= - \cuspwidth \Gamma {\beta \infty} \balpha {{\conj\nu}^\beta} k n m {\beta\inv \alpha} {\beta\inv \gamma}.
	\]
\end{theorem}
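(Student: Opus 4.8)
The plan is to mirror the proof of Theorem~\ref{ThmDualityDetail}, turning the statement into one application of the modular pairing (Theorem~\ref{ThmPairing}) for a single pair of forms modular over $\Gamma$. Put $f = \falpha \nu k m \beta \gamma \in M^!_k(\Gamma, \nu)$ and $g = \galpha {\conj \nu} {2-k} n \alpha \gamma \in M^!_{2-k}(\Gamma, \conj \nu)$; these have dual weight and conjugate multiplier, $f$ has its pole at $\cusp{\beta \infty}$ and $g$ at $\cusp{\alpha \infty}$, and \emph{both} are row-reduced at the common cusp $\cusp{\gamma \infty}$. By the slash relations~\eqref{Eqn_fcusp1} we have $f|_k \alpha = \falpha {\nu^\alpha} {k} m {\alpha\inv \beta} {\alpha\inv \gamma}$ and $g|_{2-k} \beta = \galpha {\conj \nu^\beta} {2-k} n {\beta\inv \alpha} {\beta\inv \gamma}$, so the forms named in the theorem are exactly the expansions of $f$ and $g$ at $\cusp{\alpha \infty}$ and $\cusp{\beta \infty}$; in particular $\aalpha {\nu^\alpha} k m n {\alpha\inv \beta} {\alpha\inv \gamma}$ is the $q^n$-coefficient of $f$ at $\cusp{\alpha \infty}$, and $\balpha {{\conj\nu}^\beta} k n m {\beta\inv \alpha} {\beta\inv \gamma}$ is the $q^m$-coefficient of $g$ at $\cusp{\beta \infty}$. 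This first step is purely formal.

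Next I would invoke Theorem~\ref{ThmPairing} to get $\set{f, g}_\Gamma = 0$ and expand the pairing over cusps via~\eqref{Equation: Formal modular pairing}. Since $g$ vanishes at every cusp other than $\cusp{\alpha \infty}$ and $\cusp{\gamma \infty}$, while $f$ is holomorphic away from $\cusp{\beta \infty}$ and $\cusp{\gamma \infty}$, the inner sums $\sum_\ell a^{\gamma_\rho}(\ell) b^{\gamma_\rho}(-\ell)$ can only be nonzero at the three cusps $\cusp{\alpha \infty}$, $\cusp{\beta \infty}$, and $\cusp{\gamma \infty}$. Choosing representatives $\gamma_{\cusp{\alpha \infty}} = \alpha$ and $\gamma_{\cusp{\beta \infty}} = \beta$ (legitimate by the $\gamma_\rho$-independence of Lemma~\ref{Lemmamatricesforcusp}), the leading term $q^{-n}$ of $g$ at $\cusp{\alpha \infty}$ has coefficient $1$ and yields the resonance $\frac{\cuspwidth \Gamma {\alpha \infty}}{\widehat w} \aalpha {\nu^\alpha} k m n {\alpha\inv \beta} {\alpha\inv \gamma}$, while the leading term $q^{-m}$ of $f$ at $\cusp{\beta \infty}$ yields $\frac{\cuspwidth \Gamma {\beta \infty}}{\widehat w} \balpha {{\conj\nu}^\beta} k n m {\beta\inv \alpha} {\beta\inv \gamma}$.

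The crux is to show that all other contributions cancel. At the pole cusps the non-resonant terms either vanish outright — when $\cusp{\alpha \infty}$, $\cusp{\beta \infty}$, $\cusp{\gamma \infty}$ are distinct, the principal parts of $g$ at $\cusp{\alpha \infty}$ and of $f$ at $\cusp{\beta \infty}$ are the single terms $q^{-n}$ and $q^{-m}$, and the opposite form is holomorphic or vanishing there — or, when pole cusps coincide, are killed exactly as in Theorem~\ref{ThmDualityDetail}, where the analogous sum $\sum_\ell \ainf \nu k m \ell \binf {\conj \nu} {2-k} n {-\ell}$ vanishes term-by-term because the partition identity $\parent{-\tIalpha { \nu} {k} \alpha \gamma} \sqcup \tJalpha {\conj\nu} {2-k} \alpha \gamma = \Zrho \nu {\cusp{\alpha \infty}}$ forces one factor of each term to be zero. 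At the shared row-reduction cusp $\cusp{\gamma \infty}$ the contribution cancels by the design of the error terms $\falphaerror \nu k m \beta \gamma$ and $\galphaerror {\conj \nu} {2-k} n \alpha \gamma$: the correction poles of $f$ sit at the weight $2-k$ cusp-form indices, where $g$ — being row-reduced at $\cusp{\gamma \infty}$ — has vanishing coefficients, and symmetrically the correction poles of $g$ sit at the weight $k$ holomorphic-form indices, where $f$ vanishes; this is precisely the mechanism behind Lemma~\ref{Lemma: fmu and gmu exist}. Assembling the two resonances then gives $\frac{\cuspwidth \Gamma {\alpha \infty}}{\widehat w} \aalpha {\nu^\alpha} k m n {\alpha\inv \beta} {\alpha\inv \gamma} + \frac{\cuspwidth \Gamma {\beta \infty}}{\widehat w} \balpha {{\conj\nu}^\beta} k n m {\beta\inv \alpha} {\beta\inv \gamma} = 0$, and multiplying through by $\widehat w$ yields the stated identity.

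I expect the main obstacle to be the leftover products of constant terms (at $\cusp{\alpha \infty}$ and $\cusp{\gamma \infty}$), which survive the pole bookkeeping, together with the coincidence cases among the three cusps. The constant-term products must be annihilated using the Eisenstein adjustment $\mathbf x$ built into $f$ in Lemma~\ref{Lemma: fmu and gmu exist}, which forces $f$ to pair trivially with the Eisenstein subspace that records constant-term data; checking that this exactly cancels the residual constants is the delicate point. When two or three of $\cusp{\alpha \infty}$, $\cusp{\beta \infty}$, $\cusp{\gamma \infty}$ collide, the relevant inner sums merge, and one must re-verify — using the root-of-unity relations of Lemma~\ref{Lemmamatricesforcusp} to reconcile the representatives $\alpha$, $\beta$, $\gamma$ — that the leading terms retain the correct normalization; in the fully collided case the widths agree and the statement collapses to Theorem~\ref{ThmDualityDetail} after conjugation. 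Granting these points, the remainder is the routine index-chasing already developed in Section~\ref{SecDualityWHMF}.
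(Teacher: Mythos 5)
Your proposal is correct and follows essentially the same route as the paper: the paper likewise pairs $\falpha \nu k m \beta \gamma$ with $\galpha {\conj \nu} {2-k} n \alpha \gamma$, invokes Theorem \ref{ThmPairing}, localizes the pairing to the cusps $\cusp{\alpha\infty}$, $\cusp{\beta\infty}$, and $\cusp{\gamma\infty}$, kills each non-resonant sum term-by-term via the index partitions and the row-reduction/error-term design, and dispatches coincidences among the three cusps exactly as you suggest, since every merged sum vanishes individually. The only inessential discrepancy is your anticipated ``delicate point'': the Eisenstein adjustment $\mathbf x$ plays no role in the duality computation itself, because $\galpha {\conj \nu} {2-k} n \alpha \gamma$ equals its prescribed principal part plus $o(1)$ at every cusp, so its constant terms vanish wherever $f$ has constants and the constant-term products drop out without further argument.
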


\begin{proof}
	We apply the modular pairing to $\falpha \nu k m \beta \gamma$ and $\galpha {\conj \nu} {2-k} n \alpha \gamma$. By Lemma \ref{Lemma: fmu and gmu exist}, these are both weakly holomorphic functions, and so by Theorem \ref{ThmPairing} we see
	\[
	\set{\falpha \nu k m \beta \gamma, \galpha {\conj \nu} {2-k} n \alpha \gamma}_\Gamma = 0.
\]
We have that $\falpha \nu k m \beta \gamma \galpha {\conj \nu} {2-k} n \alpha \gamma$ vanishes at each cusp except possibly at $\cusp {\alpha \infty}$, $\cusp {\beta \infty}$, and $\cusp {\gamma \infty}$. The contribution at the cusp $\cusp \gamma \infty$ is
\begin{equation}\label{Contribution from gamma infinity}
	\frac{\cuspwidth \Gamma {\gamma \infty}}{\widehat w} \sum\limits_{\ell \in \Zrho \nu {\gamma \infty}} \aalpha {\nu^\gamma} k m \ell {\gamma\inv \beta} I \balpha {{\conj\nu}^\gamma} {2-k} n {-\ell} {\gamma\inv \alpha} I.
\end{equation}
Now if $\ell < 0$ and $\ell \not\in -\Jalpha {\conj\nu} k \gamma$, then $\aalpha {\nu^\gamma} k m \ell {\gamma\inv \beta} I = 0$; otherwise, we have $\balpha {{\conj\nu}^\gamma} {2-k} n {-\ell} {\gamma\inv \alpha} I = 0$. But this covers every $\ell$, so the contribution to the pairing from this sum is zero. The sums
\begin{equation}\label{Contribution from alpha infinity}
	\frac{\cuspwidth \Gamma {\alpha \infty}}{\widehat w} \sum\limits_{\ell \in \Zrho \nu {\alpha \infty}} \aalpha {\nu^\alpha} k m \ell {\alpha\inv \beta} {\alpha\inv \gamma} \balpha {{\conj\nu}^\alpha} {2-k} n {-\ell} {I} {\alpha\inv \gamma}
\end{equation}
and
\begin{equation}\label{Contribution from beta infinity}
\frac{\cuspwidth \Gamma {\beta \infty}}{\widehat w} \sum\limits_{\ell \in \Zrho \nu {\beta \infty}} \aalpha {\nu^\beta} k m \ell {I} {\beta\inv \gamma} \balpha {{\conj\nu}^\beta} {2-k} n {-\ell} {\beta\inv \alpha} {\beta\inv \gamma}
\end{equation}
likewise vanish. The cusps $\alpha \infty$, $\beta \infty$, and $\gamma \infty$ are not necessarily distinct; if some or all of these cusps are equal, then some or all of \eqref{Contribution from gamma infinity}, \eqref{Contribution from alpha infinity}, \eqref{Contribution from beta infinity} may be identified with one another. However, as each of these expressions vanish, this is immaterial.
Then the only contributions to the pairing are $\frac{\cuspwidth \Gamma {\alpha \infty}}{\widehat w} \aalpha {\nu^\alpha} k m n {\alpha\inv \beta} {\alpha\inv \gamma}$ and $\frac{\cuspwidth \Gamma {\beta \infty}}{\widehat w} \balpha {{\conj\nu}^\beta} {2-k} n m {\beta\inv \alpha} {\beta\inv \gamma}$, and so
\[
	\set{\falpha \nu k m \beta \gamma, \galpha {\conj \nu} {2-k} n \alpha \gamma}_\Gamma = \frac{\cuspwidth \Gamma {\beta \infty}}{\widehat w} \balpha {{\conj\nu}^\beta} k n m {\beta\inv \alpha} {\beta\inv \gamma} + \frac{\cuspwidth \Gamma {\alpha \infty}}{\widehat w} \aalpha {\nu^\alpha} {2-k} m n {\alpha\inv \beta} {\alpha\inv \gamma}.
\]

This concludes the proof.
\end{proof}

In particular, $\parent{\cuspwidth \Gamma {\alpha \infty} \falpha {\nu^\alpha} {k} m {\alpha\inv \beta} {\alpha\inv \gamma}, \cuspwidth \Gamma {\beta \infty} \galpha {\conj \nu^\beta} {2-k} n {\beta\inv \alpha} {\beta\inv \gamma}}_{m, n}$ constitutes a modular grid.

\begin{remark}Theorem \ref{ThmDualityPolesAnywhere} may be extended in the same manner that Theorem \ref{ThmVanishingDuality} extends Theorem \ref{IntroThmDualitySimple}/Theorem~\ref{ThmDualityDetail}. We leave the precise statement and the accompanying burden of notation to the interested reader. This coefficient duality is more easily proven using the techniques of Sections \ref{SecGeneratingFunctions} and \ref{SecArithmetic} to modify the grid given by Theorem \ref{ThmDualityPolesAnywhere}, rather than by using the modular pairing.
\end{remark}

\section{Generating functions}\label{SecGeneratingFunctions}

Let $(f_m, g_n)_{m, n}$ be a modular grid indexed by $I$ and $J$, and let $H(z, \tau)$ be a meromorphic function on $\H \times \bbH$. Write $p^n = \e^{2\pi \i n \tau}$.  We say that $H$ is a generating function for $(f_m, g_n)_{m, n}$ if
\[
H(z, \tau) = \sum_{m \in I} f_m(z) p^m \ \text{and} \ H(z, \tau) = -\sum_{n \in J} g_n(\tau) q^n
\]
wherever each sum converges.

For instance, suppose that
\[
H(z,\tau)=F(z) G(\tau),
\]
for some $F(z)=\sum_{n\in \Znu{\nu}} a_nq^n \in M^!_k(\Gamma,\nu)$ and $G(\tau)=\sum_{m\in \Znu{\conj \nu}}b_mp^m\in M^!_{2-k}(\Gamma,\overline \nu)$. Then it can be easily checked that $H$ is a generating function for $f_m=b_m F$ and $g_n=-a_n G$, which form a modular grid; every element of the grid is a multiple of either $F(z)$ or $G(\tau)$.

A similar computation shows that linear combinations of these types of products of the form \[ H(z,\tau)=\sum_{j} c_j F_j(z) G_j(\tau),\] where each $c_j\in \C$, each $F_j \in M^!_k(\Gamma,\nu)$ and each $G_j\in M^!_{2-k}(\Gamma,\overline \nu)$, give modular grids whose rows or columns span finite-dimensional spaces.  Note that the coefficient duality here makes use only of the product of the Fourier expansions of the $F_j$ and $G_j$, and does not use their modularity properties at all. In this case it is not hard to see that each element $f_m$ or $g_n$ of the grid is a linear combination of the $F_j$ or of the $G_j$ respectively. Consequently, the elements $f_m$ (respectively $g_n$) of any grid of this form must have an absolute bound on the order of the pole at any cusp, given by the maximum order of the poles of the $F_j$ (respectively $G_j$) at that cusp.

The modular grids discussed in previous sections of this paper have no bound on the order of their poles.  To construct their generating functions and prove Theorem~\ref{ThmGenFn}, we need rational functions of modular forms with non-trivial denominators.  We restate the theorem here in our current notation for ease of reference.
\begin{theorem}\label{GenFunctions}
Let $\Gamma $ be a commensurable subgroup, let $k\in \frac 1 2 \Z,$ and let $\nu$ be any consistent weight $k$ multiplier. Let $\parent{\finf \nu k m}_m$ and $\parent{\ginf {\conj \nu} {2-k} n}_n$ be row-reduced canonical bases for the spaces $\spcfinf k \parent{\Gamma, \nu}$ and $\spcginf {2-k} \parent{\Gamma, \conj \nu}$, as in Section \ref{SecDualityWHMF}. There is an explicit generating function $\calH_{k}^{(\nu)}(z, \tau)$ encoding this modular grid; that is, on appropriate regions of $\H \times \H$, the function $\calH_{k}^{(\nu)}(z, \tau)$ satisfies the two equalities
\[
	\calH_{k}^{(\nu)}(z, \tau) = \sum_{m \in \tI \nu k} \finf \nu k m(z) p^m \ \text{and} \ \calH_{k}^{(\nu)}(z, \tau) = -\sum_{n \in \tJ {\overline \nu} {2 - k}} \ginf {\conj \nu} {2 - k} n(\tau) q^n.
\]
The function $\calH_{k}^{(\nu)}(z, \tau)$ is meromorphic on $\H \times \H$ with simple poles when $z = \gamma \tau$ for some $\gamma \in \Gamma$. It is modular in $z$ of weight $k$ with multiplier $\nu$, and modular in $\tau$ of weight $2 - k$ with multiplier $\overline{\nu}$.
\end{theorem}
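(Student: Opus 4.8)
The plan is to produce $\calH_{k}^{(\nu)}(z, \tau)$ as a single meromorphic function on $\H \times \H$ and then recover the two generating-function identities as its expansions on the two regions $\im z < \im \tau$ and $\im z > \im \tau$. It is worth noting at the outset that these regions are disjoint and are separated precisely by the locus $z = \gamma \tau$ where the claimed poles sit: the $p$-series $\sum_{m \in \tI \nu k} \finf \nu k m(z)\, p^m$ converges locally uniformly only when $\im \tau$ is sufficiently larger than $\im z$, since $\finf \nu k m(z)$ has leading term $q^{-m}$, while the $q$-series $-\sum_{n} \ginf {\conj \nu} {2-k} n(\tau)\, q^n$ converges only when $\im z$ is larger than $\im \tau$. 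Thus the essential content of the theorem is a meromorphic continuation across the diagonal, and the simple pole along $z = \gamma \tau$ is exactly the obstruction to the two series agreeing on a common domain.

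First I would check convergence and termwise modularity. Using the standard exponential bound on the Fourier coefficients of a weakly holomorphic form in terms of the order of its pole at $\infty$, each series converges locally uniformly on the indicated region to a holomorphic function, say $\calH_1$ and $\calH_2$. Because the convergence is locally uniform and each $\finf \nu k m$ is modular of weight $k$ and multiplier $\nu$ in $z$, the function $\calH_1$ is modular of weight $k$ and multiplier $\nu$ in $z$; symmetrically $\calH_2$ is modular of weight $2 - k$ and multiplier $\conj \nu$ in $\tau$.

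Next I would construct the meromorphic continuation explicitly. For fixed generic $\tau$ the function $z \mapsto \calH_1(z, \tau)$ should extend to the meromorphic modular form of weight $k$ and multiplier $\nu$ that is holomorphic at every cusp (its $q$-expansion at $\infty$ being read off from $\calH_2$) and has a single simple pole along the orbit $\Gamma \tau$. I would build such a kernel as a rational expression in modular forms: choose weakly holomorphic forms whose quotient furnishes a denominator vanishing along $z = \gamma\tau$ and numerator forms fixing the weights and multipliers $(\nu, \conj\nu)$, then clear any spurious zeros of the denominator. Joint meromorphy and the modularity of weight $2-k$ and multiplier $\conj\nu$ in $\tau$ then follow from a uniqueness argument: applying the weight $2-k$, multiplier $\conj\nu$ slash operator in $\tau$ produces another function with the same poles and the same behavior at the cusps, and the difference of two such normalized kernels has no interior poles and is holomorphic at every cusp in each variable, hence is a genuine modular form that a residue normalization forces to vanish.

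Finally I would match the expansions. Expanding $\calH_{k}^{(\nu)}(z, \tau)$ in $q$ on the region $\im z > \im \tau$, each coefficient is, by the modularity just established in $\tau$, an element of $\spcginf {2-k}(\Gamma, \conj\nu)$; comparing principal parts and invoking the uniqueness of the row-reduced canonical basis (Proposition \ref{Proposition: finf and ginf are canonical bases}) identifies the coefficient of $q^n$ with $-\ginf {\conj \nu} {2-k} n(\tau)$, and symmetrically the coefficient of $p^m$ with $\finf \nu k m(z)$. The coefficient duality already proven in Theorem \ref{ThmDualityDetail}, together with the index decomposition \eqref{Equation: IndexDef2}, guarantees that these two identifications are mutually consistent, i.e.\ that the double Fourier coefficients obtained from the two expansions agree. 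The main obstacle is the explicit construction of the kernel in full generality: in genus $0$ with a single cusp one may take a ratio such as $1/(\psi(\tau) - \psi(z))$ for a Hauptmodul $\psi$, but for higher genus or several cusps the naive denominator acquires extra zeros, and producing a rational expression with a pole \emph{only} along the diagonal, the correct residue, and the prescribed holomorphic behavior at every cusp is the delicate step—controlling it is what forces the careful use of the pairing and the existence theorem (Theorem \ref{ThmFunkePairingFormExistence}).
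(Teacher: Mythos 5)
Your overall architecture (a single kernel whose two one-sided expansions, valid on $\im \tau > \im z$ and $\im z > \im \tau$, are separated by the polar locus $z = \gamma\tau$) matches the theorem, but the proposal has a genuine gap at its center: the kernel is never actually constructed. You defer it to ``a rational expression in modular forms'' with a pole only along the diagonal, the correct residue, and prescribed cusp behavior, and you yourself flag this as the delicate step---but that construction is essentially the whole content of the theorem, and neither the modular pairing nor Theorem \ref{ThmFunkePairingFormExistence} produces a two-variable kernel directly. The paper avoids the continuation problem entirely by \emph{defining} the kernel through the explicit series \eqref{FGenDef}: an elementary partial-fraction term $p^{(1-\varsigma)/w}q^{\varsigma/w}/\bigl(q^{1/w}-p^{1/w}\bigr)$ that packages all the diagonal terms $\sum_m q^{-m}p^m$ at once, two \emph{finite} sums indexed by $\I{\nu}{k}$ and $\J{\conj\nu}{2-k}$, and the double series $\sum_m \sum_n \ainf{\nu}{k}{m}{n}\, p^m q^n$, which converges absolutely on all of $\H \times \H$ by standard circle-method/Poincar\'e-series bounds on coefficients. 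With this definition the $p$-expansion for $\im \tau > \im z$ is immediate from the definition of $\finf{\nu}{k}{m}$, and the $q$-expansion for $\im z > \im \tau$ follows from the duality already proven in Theorem \ref{IntroThmDualitySimple}; no meromorphic continuation, gluing, or uniqueness argument is needed, and modularity in each variable is inherited termwise from the coefficient functions, just as in your step (a). In other words, the logical direction is reversed: duality is the input that makes one function have both expansions, not a consistency check performed afterwards.

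Your uniqueness argument is also flawed as stated: the difference of two candidate kernels with the same poles and residues is, for fixed generic $\tau$, a \emph{holomorphic} modular form in $z$, i.e.\ an element of $M_k(\Gamma,\nu)$, and a residue normalization alone cannot force this to vanish when $M_k(\Gamma,\nu)$ or $S_{2-k}(\Gamma,\conj\nu)$ is nontrivial---this is precisely the ambiguity that the row-reduction projections $\widetilde\pi$, $\widehat\pi$ of Section \ref{SecDualityWHMF} were introduced to eliminate, so the correct normalization is the vanishing of the Fourier coefficients indexed by $\I{\nu}{k}$ and $\J{\conj\nu}{2-k}$, not a residue condition. Finally, on explicitness and the pole analysis: the paper handles your worry about spurious zeros of the denominator in a different and cleaner way. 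It multiplies the kernel by $H_\Gamma(z) - H_\Gamma(\tau)$ for an \emph{arbitrary} nonconstant $H_\Gamma \in M^!_0(\Gamma)$, observes that the product has uniformly bounded pole orders at every cusp in both variables and hence lies in a finite-dimensional subspace of $M^!_k(\Gamma,\nu) \otimes M^!_{2-k}(\Gamma,\conj\nu)$, computes it there by finite linear algebra as a sum $\sum_j c_j F_j(z) G_j(\tau)$, and divides back; since $H_\Gamma$ was arbitrary, intersecting the zero sets of $H_\Gamma(z)-H_\Gamma(\tau)$ over all choices shows the poles occur only at $z = \gamma\tau$ and are simple. Spurious zeros are not ``cleared''; they are avoided by varying $H_\Gamma$.
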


\begin{proof}
	The forms $\parent{\finf \nu k m}_{m}$ and $\parent{\ginf {\overline\nu} {2-k} n}_{n}$, which constitute bases for the spaces $\spcfinf k \parent{\Gamma, \nu}$ and $\spcginf {2-k} \parent{\Gamma, \overline\nu}$ respectively, may be collected together in a generating series as follows. Let $\varsigma$ and $w$ be the constants defined in Section \ref{Sec:Expansions}.
We define $\finfgen \nu k (z, \tau)$ to be the meromorphic function on $\H \times \H$ given by
\begin{align}\label{FGenDef}
	\finfgen \nu k (z,\tau):&= \frac{p^{\frac{1 - \varsigma} w}q^{\frac {\varsigma} w}}{q^{\frac1w}-p^{\frac 1 w} } + \sum\limits_{m \in \I {\nu} {k}}p^{-m}q^{m} - \sum\limits_{n \in \J {\conj \nu} {2-k}}p^n q^{-n} +\sum\limits_{m \in \tI \nu k} \sum\limits_{n \in \tJ {\overline \nu} {2-k}} \ainf {\nu} {k} mn p^mq^n.
	\end{align}
Note that the two middle sums are sums over finite sets whose sizes are the dimensions of spaces of \emph{holomorphic} modular forms.

If $\im(\tau)>\im (z)$, then we may expand $\finfgen \nu k (z,\tau)$ as a series in powers of $p$. The first three terms become $\sum_{m\in \tI \nu k} q^{-m} p^m$,
and we find that
\begin{equation}\label{FGenF}
	\finfgen \nu k (z, \tau) = \sum_{m \in \tI \nu k} \finf \nu k m (z)p^m.
\end{equation}
\noindent If $\im(z) > \im (\tau)$, then we may expand $\finfgen \nu k (z,\tau)$ as a series in powers of $q$.  The first three terms become $-\sum_{n \in \tJ {\overline \nu} {2-k}} p^{-n} q^n$, and we use Theorem~\ref{IntroThmDualitySimple} to find that
\[
	\finfgen \nu k (z, \tau) = - \sum_{n \in \tJ {\overline \nu} {2-k}} \ginf {\overline\nu} {2-k} n (\tau)q^n.
\]
\noindent
Thus, $\finfgen \nu k (z, \tau)$
is a two-variable meromorphic modular form which transforms in $z$ with weight $k$ and multiplier $\nu$, and in $\tau$ with weight $2-k$ and multiplier $\overline \nu.$

Using the modularity of the coefficients, it is clear that $\finfgen \nu k (z, \tau)$ is holomorphic in $z$ at each cusp $\cusp \rho\neq \infty$, and vanishes as $\tau$ approaches any cusp $\cusp \rho\neq \infty.$
\begin{remark}
If $f(\tau)=\sum_{n\in \Znu \nu} a(n) q^n \in M^!_{k}(\Gamma, \nu)$, with $k, \Gamma,$ and $\nu$ as above, then the circle method (see for instance section IV of \cite{Rademacher}) or the method of Maass-Poincar\'e series (see for instance Section 1.3 of~\cite{Bruinier2002}) can be used to bound the size of the coefficients $a(n).$ In particular, suppose that $M_f$ is the maximum order of any pole of $f$ at any cusp.  If $n, M_f>0,$ then there are constants $c_f, c_1, c_2>0$ such that
\[
|a(n)| <  c_f\, c_1\,  \left(\tfrac{n}{M_f}\right)^{\frac{k-1}{2}} \exp\left(c_2 \sqrt{M_f n}\right),
\]
where $c_f$ is bounded by the sum of the absolute values of the coefficients of the principal part of $f$, and $c_1$ and $c_2$ depend only on the group and multiplier. If $M_f\leq 0$ (i.e. $f$ is a holomorphic modular form), then $|a(n)|$ exhibits a stronger\emph{ polynomial} bound in terms of $n$, whereas if $n\leq 0$, then we clearly have $|a(n)|\leq c_f$. Convergence of Maass--Poincar\'e series is delicate for $1/2\leq k\leq 3/2$; however, similar asymptotic bounds may be established by considering powers of $f$ or multiplication of $f$ by another modular form to change the weight.
Together these bounds suffice to show that the double series in~\eqref{FGenDef} converges absolutely for all $q,p$ with $|q|,|p|<1$.  Further details of the circle method or Maass-Poincar\'e series are beyond the scope of this paper. However, the process we demonstrate in the rest of this section may be followed by considering this summation as a formal sum.
\end{remark}

From the definition it is clear that $\finfgen \nu k (z, \tau)$ has a pole when $z=\tau$. By modularity, $\finfgen \nu k (z, \tau)$ must have a pole at all points $(z,\tau)$ where $z=\gamma \tau$ for some $\gamma\in \Gamma$. We will see later that these poles are simple and that $\finfgen \nu k (z, \tau)$ has no other poles on $\H\times \H$.

We wish to write $\finfgen \nu k (z, \tau)$ as an explicit rational function of weakly holomorphic modular forms. We begin by selecting any non-constant function
\begin{equation}\label{H_def}
H_\Gamma(z) = \sum_{n\in\Znu{\nu_0}}
C_n q^{n}.\end{equation}
which is modular for $\Gamma$ with weight $0$ and \emph{trivial} multiplier $\mathbf{1}$,  so $\Znu{\mathbf{1}}=\frac{1}{w}\Z$. For instance, we may choose $H_\Gamma = \finf {\nu_0} 0 m \in \spcfinf 0 (\Gamma)$ for an appropriate index $m$.  Choosing $m$ to be minimal simplifies the computations.

\begin{remark}
	Our method depends on our choice of $H_\Gamma$, and so the resulting expression for $\finfgen \nu k (z, \tau)$ is far from unique.  However, if $X(\Gamma)$ has genus zero, we may make a canonical choice for $H_\Gamma$ to be a Hauptmodul.
\end{remark}

Now consider the function
\begin{align}\label{G2Def}
	\calK(z,\tau) = (H_\Gamma(z)-H_\Gamma(\tau))\cdot \finfgen \nu k (z, \tau).
\end{align}
Let $\widetilde f_m(z)$ and $\widetilde g_n(\tau)$ be the coefficient functions of $\calK(z,\tau)$ when expanded in powers of $p$ or $q$ respectively, so that on appropriate regions of $\H\times \H$, we have the equalities
\[	
	\calK(z,\tau) = \sum_{m \in \tI \nu k} \widetilde f_m (z)p^m \ \text{and} \ \calK(z,\tau) = \sum_{n \in \tJ {\overline \nu} {2-k}} \widetilde g_n(\tau)q^n.
\]
\noindent Expanding \eqref{G2Def}, we find that
\begin{align}
\label{ft}	\widetilde f_m &= \finf \nu k m\cdot H_\Gamma-\sum_{i\in \tI \nu k} C_{m-i}\cdot \finf \nu k {i} ,\\
\label{gt}	\widetilde g_n &= \ginf {\overline\nu} {2-k} n \cdot H_\Gamma -\sum_{j\in \tJ {\overline\nu} {2-k}} C_{n-j}\cdot \ginf {\overline\nu} {2-k} j.
\end{align} Note that these are finite sums.

We make three observations.
\begin{enumerate}
\item[1)]Each $\widetilde f_m$ is a form in $M^!_k(\Gamma,\nu)$, and each $\widetilde g_n$ is a form in $M^!_{2-k}(\Gamma,\overline\nu)$.
\item[2)] If $\cusp\rho$ is any cusp of $\Gamma$ other than $\cusp\infty$, then the order of the pole of $\widetilde f_m$ and $\widetilde g_m$ at $\rho$ is bounded by the order of the pole of $H_\Gamma$ at $\rho.$ In particular, if $H_\Gamma\in \spcfinf 0 \parent{\Gamma},$ then $\widetilde f_m\in \spcfinf k (\Gamma,\nu)$ and $\widetilde g_n \in \spcginf {2-k}(\Gamma,\overline \nu).$

\item[3)] There is a global bound on the orders of the poles of the forms appearing as the coefficients of $\calK(z,\tau)$ expanded in either variable.  If $B_\infty$ is the order of the pole of $H_\Gamma$ at $\infty$, then the order of the pole of $\tilde f_m$ at infinity is bounded by $B_\infty+\max (\J{\overline\nu} {2-k})$, and
the order of the pole of $\tilde g_n$ at infinity is bounded by $B_\infty+\max (\I{\nu} {k})$. This follows either by carefully expanding \eqref{ft} and \eqref{gt}, or by substituting \eqref{FGenDef} into \eqref{G2Def}. In the second case, the sums in \eqref{FGenDef} multiplied by $H_\Gamma(z)-H_\Gamma(\tau)$ clearly exhibit these bounds on the negative exponents of $p$ and $q$. In order to find the bounds for the initial term of \eqref{FGenDef} multiplied by $H_\Gamma(z)-H_\Gamma(\tau),$ we expand
\begin{align*}
	\frac{H_\Gamma(z)-H_\Gamma(\tau)}{q^{\frac 1w}-p^{\frac 1 w} }
	&=\sum_{\substack{n\in \Znu{\nu_0}\\ n\geq -B_\infty}} C_{n}\frac{q^{n}-p^{n}}{q^{\frac 1w}-p^{\frac 1 w} }
	\\	&= \sum_{\substack{n\in \Znu{\nu_0}\\-B_\infty\leq n<0}} C_{n}
	\sum_{\substack{m\in \Znu{\nu_0}\\n\leq m<0}} -q^{m}p^{n-m-\frac {1}w}
	+
	\sum_{\substack{n\in \Znu{\nu_0}\\n>0}} C_{n}
	\sum_{\substack{m\in \Znu{\nu_0}\\ 0\leq m< n}} q^{m}p^{n-m-\frac {1}w}.
\end{align*}
Here, the $C_n$ are the coefficients of $H_\Gamma$ as defined in \eqref{H_def}. The exponents in either variable of this final expression are bounded below by $-B_\infty.$

\end{enumerate}
Given these three observations, we see that $\calK (z,\tau)$ lives in the finite-dimensional subspace of $M^!_k(\Gamma,\nu)\otimes M^!_{2-k}(\Gamma,\overline \nu)$ generated by forms in each variable whose principal parts at each cusp have order satisfying the bounds described above. Since this space is finite-dimensional, it is a straightforward linear algebra problem to calculate $\calK(z,\tau)$ in terms of basis elements of $M^!_k(\Gamma,\nu)$ and $M^!_{2-k}(\Gamma,\overline \nu)$ respectively.
In particular, we note that if $H_\Gamma\in \spcfinf 0 \parent{\Gamma}$ (with trivial multiplier), then
\[
	\calK (z,\tau)\in \spcfinf k \parent{\Gamma, \nu}\otimes \spcginf {2-k}\parent{\Gamma, \overline \nu},
\]
and so we may write $\calK (z,\tau)$ as a finite sum of the form $\sum\limits_{j} c_j \finf \nu k {m_j} (z) \ginf {\conj \nu} {2-k} {n_j} (\tau)$ for some $c_j$, $m_j$, and $n_j$.

We now have
\[
	\finfgen \nu k(z,\tau)=\frac{\calK(z,\tau)}{H_\Gamma(z)-H_\Gamma(\tau)}.
\]
Since $\calK(z,\tau)\in M^!_k(\Gamma,\nu)\otimes M^!_{2-k}(\Gamma,\overline \nu),$ the only poles of $\finfgen \nu k(z,\tau)$ in $\H\times \H$ occur when $H_\Gamma(z)-H_\Gamma(\tau)=0.$ Since $H_\Gamma$ was chosen arbitrarily in $M^!_0(\Gamma)$, the poles of $\finfgen \nu k(z,\tau)$ occur only when $z=\gamma\tau$ for some $\gamma\in \Gamma$, and these poles are simple.
\end{proof}

Analogous generating functions for the forms appearing in Theorem~\ref{ThmDualityPolesAnywhere} could be constructed following a similar process, but would require special care to handle the poles at different cusps. A much simpler approach is demonstrated in Proposition \ref{GenAct}, by considering matrix actions on the generating functions given in Theorem \ref{GenFunctions}.

\section{Arithmetic operations on modular grids}\label{SecArithmetic}

Modular grids can be combined and altered by linear operators. In this section, we give three examples of this phenomenon.

\subsection{Linear combinations of grids}\label{Subsection: linear combinations of grids}

At the beginning of Section \ref{SecGeneratingFunctions}, we discussed modular grids of the shape $ H(z,\tau) = \sum_{j} c_j F_j(z) G_j(\tau)$, where each $c_j \in \C$, each $F_j \in M^!_k(\Gamma,\nu)$ and each $G_j \in M^!_{2-k}(\Gamma,\overline \nu)$. More generally, linear combinations of modular grids are modular grids.

\begin{proposition}\label{Proposition: Linear combinations of modular grids}
Assume the notation above. Let $(f_{m}^{(1)},g_{n}^{(1)})_{m, n}, \ldots, (f_{m}^{(N)},g_{n}^{(N)})_{m, n}$ be a finite list of modular grids with identical indexing sets, and suppose $f_{m}^{(j)}\in M^!_k(\Gamma,\nu)$ and $g_{n}^{(j)}\in M^!_{2-k}(\Gamma,\overline \nu)$ for $j \in \set{1,2, \ldots, N}$. For any fixed $c_1,c_2, \ldots, c_N\in \C$, if we write
\[
	f_{m} = \sum\limits_{j = 1}^N c_j f_{m}^{(j)} \ \ \ \text{and} \ \ \ g_{n} = \sum\limits_{j = 1}^N c_j \, g_{n}^{(j)},
\]
then $(f_{m},g_{n})_{m, n}$ is a modular grid.
\end{proposition}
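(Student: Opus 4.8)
The plan is to prove the statement by a direct appeal to the definition of a modular grid given at the start of Section~\ref{SecDualityWHMF}, exploiting the fact that every operation involved---extracting a Fourier coefficient, splitting off a Laurent polynomial, and forming a linear combination---is linear, together with the fact that a finite linear combination of Laurent polynomials is again a Laurent polynomial. Since the grids share a common pair of indexing sets $I$ and $J$, the sums appearing below all range over these same sets, so the bookkeeping is uniform across $j$.

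First I would fix notation for the input grids. By hypothesis, for each $j \in \{1, \ldots, N\}$ we may write
\[
	f_{m}^{(j)}(z) = R_{m}^{(j)}(q) + \sum_{n \in J} a^{(j)}(m,n)\, q^{n}
	\quad \text{and} \quad
	g_{n}^{(j)}(z) = S_{n}^{(j)}(q) + \sum_{m \in I} b^{(j)}(n,m)\, q^{m},
\]
where each $R_{m}^{(j)}$ and $S_{n}^{(j)}$ is a Laurent polynomial in rational powers of $q$, and where the grid condition supplies $a^{(j)}(m,n) = -b^{(j)}(n,m)$ for all $m \in I$ and $n \in J$.

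Next I would define the data of the candidate grid by setting $R_{m}(q) = \sum_{j=1}^{N} c_j R_{m}^{(j)}(q)$, $S_{n}(q) = \sum_{j=1}^{N} c_j S_{n}^{(j)}(q)$, $a(m,n) = \sum_{j=1}^{N} c_j a^{(j)}(m,n)$, and $b(n,m) = \sum_{j=1}^{N} c_j b^{(j)}(n,m)$. Each $R_{m}$ and $S_{n}$ is a finite sum of Laurent polynomials and hence a Laurent polynomial. Because $M^!_k(\Gamma,\nu)$ and $M^!_{2-k}(\Gamma,\overline\nu)$ are $\C$-vector spaces, the forms $f_{m} = \sum_{j} c_j f_{m}^{(j)}$ and $g_{n} = \sum_{j} c_j g_{n}^{(j)}$ again lie in these spaces and are therefore weakly holomorphic modular forms. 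Substituting the expansions above and collecting terms---which is legitimate since the sum over $j$ is finite---yields $f_{m} = R_{m} + \sum_{n \in J} a(m,n) q^{n}$, and similarly for $g_{n}$; this verifies the first two conditions in the definition of a modular grid.

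Finally, the duality relation passes through the linear combination unchanged: for every $m \in I$ and $n \in J$ we have
\[
	a(m,n) = \sum_{j=1}^{N} c_j\, a^{(j)}(m,n) = -\sum_{j=1}^{N} c_j\, b^{(j)}(n,m) = -b(n,m),
\]
which is the third condition. I do not anticipate a genuine obstacle; the proof is pure linearity. The only point deserving a word of care is that the decomposition of each $f_{m}^{(j)}$ into a Laurent polynomial plus a $J$-indexed tail need not be canonical, but this is harmless because we simply add the chosen decompositions term by term, and the hypothesis of identical indexing sets guarantees that every sum is taken over the same $I$ and $J$.
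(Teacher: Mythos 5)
Your proposal is correct and follows essentially the same route as the paper's own proof: fix the given decompositions of each $f_m^{(j)}$ and $g_n^{(j)}$, form the $c_j$-weighted sums of the Laurent polynomials and of the coefficients $a^{(j)}(m,n)$, $b^{(j)}(n,m)$, and observe that the duality relation $a(m,n) = -b(n,m)$ passes through by linearity. Your added remark that the chosen decompositions need not be canonical but are simply summed term by term is a fair point of care that the paper leaves implicit.
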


\begin{proof}
	Let $(f_{m}^{(1)},g_{n}^{(1)})_{m, n}, \ldots, (f_{m}^{(N)},g_{n}^{(N)})_{m, n}$ be as above; for $j \in \set{1,2, \ldots, N}$, write
	\[
		f_m^{(j)}(z) = R_m^{(j)}(q) + \sum\limits_{n \in J} a^{(j)}(m, n) q^n \ \ \ \text{and} \ \ \ g_n^{(j)}(\tau) = S_n^{(j)}(p) + \sum\limits_{m \in I} b^{(j)}(n, m) p^m,
	\]
	where $R_{m}^{(j)}$ and $S_{n}^{(j)}$ are Laurent polynomials in fractional powers of $q$ and $p$ respectively. Set
	\begin{align*}
		R_m(q) &= \sum\limits_{j = 1}^N c_j R_m^{(j)}(q), \\
		S_n(p) &= \sum\limits_{j = 1}^N c_j S_n^{(j)}(p), \\
		a(m, n) &= \sum\limits_{j = 1}^N c_j a^{(j)}(m, n), \ \text{and} \\
		b(n, m) &= \sum\limits_{j = 1}^N c_j b^{(j)}(n, m).
	\end{align*}
	Then by construction,
	\begin{align*}
	f_{m}(z) &= \sum\limits_{j = 1}^N c_j f_{m}^{(j)}(z) = R_m(q) + \sum\limits_{n \in J} a(m, n) q^n, \\
	g_n(\tau) &= \sum\limits_{j = 1}^N c_j \, g_{n}^{(j)}(\tau) = S_n(p) + \sum\limits_{m \in I} b(n, m) p^m, \ \text{and} \\
	a(m, n) &= \sum\limits_{j = 1}^N c_j a^{(j)}(m, n) = - \sum\limits_{j = 1}^N c_j b^{(j)}(n, m) = - b(n, m);
	\end{align*}
	our claim follows.
\end{proof}

\begin{example} (See~\cite{Ahlgren}.)  Let $\ell$ be any prime such that $X_0(\ell)$ has genus $0$, and let $J_\ell$ be the Hauptmodul for $X_0(\ell)$ with a pole at $\infty$. Let
\[
	\calH_1(z,\tau)=\frac{\frac{1}{2\pi \i}J'_\ell(\tau)}{J_\ell(\tau)-J_\ell(z)}.
\]
The coefficients of $\calH_1$, expanded in $p$ and $q$, form a modular grid $(f_{m}^{(1)},g_{n}^{(1)})_{m\geq 0,n\geq 1}$, with $f_0^{(1)}=1$, $f_{m}^{(1)}=q^{-m}+O(q)$, and $g_n^{(1)}=p^{-n}+O(p).$ The $f_m^{(1)}$ forms create a basis for $M^{(\infty)}_{0}(\Gamma_0(\ell))$ (with trivial multiplier), while the $g_n^{(1)}$ forms create a basis for $\widehat M^{(\infty)}_{2}(\Gamma_0(\ell)).$ The function
\[
	\calH_2(z,\tau)= \widehat E_{2,\ell}(\tau)=\frac{1}{1-\ell}\left(E_2(\tau)-\ell E_2(\ell\tau)\right)
	\]
is also the generating function of a modular grid for the same groups with the same indexing sets. In this case, the weight $0$ functions of the grid are constants.
Then by Proposition \ref{Proposition: Linear combinations of modular grids}, the coefficients of
\[
	\calH_3(z,\tau) = \calH_1(z,\tau)-\calH_2(z,\tau)=\frac{\frac{1}{2\pi i}J'_\ell(\tau)}{J_\ell(\tau)-J_\ell(z)}-\widehat E_{2,\ell}(\tau)
\]
form a new modular grid $(f_{m},g_{n})_{m\geq 1,n\geq 0}.$ Notice that $g_{n}=g_{n}^{(1)}$ for $n\geq 1$, and $g_{0}=\widehat E_{2,\ell}(\tau),$ whereas $f_{m}=f_{m}^{(1)} - c_\ell(m),$ where $c_\ell(m)$ is the coefficient of $p^m$ of $\widehat E_{2,\ell}(\tau).$ By considering the modular pairing $\{f_{m}, \widehat E_{2,\ell}\}_\Gamma,$ we find that $f_0(z)=0$, and for $m\geq 1$, the constant term of $f_{m}$ at the cusp $0$ now vanishes, rather than the constant term at $\infty$.
\end{example}

\subsection{Modular transformations of grids}
We may also transform a modular grid arising from a rational generating function by the action of matrices on one or both variables.  For example, consider the modular grid $\parent{\finf \nu k m, \ginf {\conj \nu}{2-k} n}_{m, n}$ with associated bivariate generating function $\finfgen \nu k (z, \tau)$. Rather than acting on the $\finf \nu k m$ and the $\ginf {\conj \nu} {2 - k} n$ by a modular transformation, as described before Question~\ref{q1}, we apply the usual weight $k$ slash operator~$|_{k,w}$ in the $w$ variable (either $z$ or $\tau$), as defined in \eqref{SlashDef}, to their joint generating function. This process yields generating functions for the modular grids described by Theorem \ref{ThmDualityPolesAnywhere}, and thus vindicates its generality: Theorem \ref{ThmDualityPolesAnywhere} describes the behavior of the modular grid $\parent{\finf \nu k m (z), \ginf {\conj \nu} {2-k} n(\tau)}_{m, n}$ under modular transformations.

\begin{proposition}\label{GenAct}
Assume the notation above. Suppose $\finfgen \nu k (z, \tau)$ encodes the grid $\parent{f_{k,m}^{(\nu)},g_{2-k,n}^{(\overline\nu)}}_{m, n}$, as defined in Section \ref{SecGeneratingFunctions}, and suppose $\alpha, \beta,\gamma \in\widehat\Gamma$.
Then the function \[ \finfgen {\nu^\gamma} k (z, \tau)|_{k,z}(\gamma^{-1}\alpha)\,|_{2-k,\tau}(\gamma^{-1}\beta) \]
encodes the modular grid
\[
\left( \frac{\cuspwidth \gamma {\gamma\infty}}{\cuspwidth \beta{\beta\infty}}\falpha {\nu^\alpha} k m {\alpha\inv\beta} {\alpha\inv\gamma} (z), \
\frac{\cuspwidth \gamma {\gamma\infty}}{\cuspwidth \alpha {\alpha\infty}}\galpha {\conj\nu^{\beta}} {2-k} n {\beta\inv \alpha} {\beta\inv\gamma}(\tau)\right)_{m, n}.\]
\end{proposition}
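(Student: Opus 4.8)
The plan is to treat $\Phi(z,\tau) := \finfgen{\nu^\gamma}{k}(z,\tau)|_{k,z}(\gamma^{-1}\alpha)\,|_{2-k,\tau}(\gamma^{-1}\beta)$ as a bivariate generating function in its own right and match its two single-variable expansions against the van-bases of Section~\ref{SubsecOtherCusps}. Write $\lambda_1 = \gamma^{-1}\alpha$ and $\lambda_2 = \gamma^{-1}\beta$; the two slashes act on independent variables, so they commute and $\Phi$ is meromorphic on $\H\times\H$. First I would record the modular data of $\Phi$. Since $\finfgen{\nu^\gamma}{k}$ is modular in $z$ for $\Gamma^\gamma$ of weight $k$ with multiplier $\nu^\gamma$, equation~\eqref{Equation: Gamma^alpha} and the composition rule $(\nu^\alpha)^\beta = \nu^{\alpha\beta}$ give that $\Phi$ is modular in $z$ for $\lambda_1^{-1}\Gamma^\gamma\lambda_1 = \Gamma^\alpha$ with multiplier $(\nu^\gamma)^{\lambda_1} = \nu^{\gamma\lambda_1} = \nu^\alpha$, and symmetrically modular in $\tau$ for $\Gamma^\beta$ of weight $2-k$ with multiplier $\conj{\nu^\beta}$. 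The simple poles of $\finfgen{\nu^\gamma}{k}$ along $z = \delta\tau$ ($\delta\in\Gamma^\gamma$) likewise become simple poles of $\Phi$ along $z = \lambda_1^{-1}\delta\lambda_2\tau = \alpha^{-1}g\beta\,\tau$ with $g\in\Gamma$. Thus $\Phi$ has exactly the shape of a Theorem~\ref{GenFunctions} generating function, with groups and multipliers twisted to $\Gamma^\alpha$ and $\Gamma^\beta$.

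Next I would extract the $z$-forms. Slashing in $z$ alone, the identity~\eqref{Eqn_fcusp1} applied to $\finf{\nu^\gamma}{k}{m} = \falpha{\nu^\gamma}{k}{m}{I}{I}$ gives $\finf{\nu^\gamma}{k}{m}|_k\lambda_1 = \falpha{\nu^\alpha}{k}{m}{\alpha^{-1}\gamma}{\alpha^{-1}\gamma}$, whose pole cusp and row-reduction cusp are both $\alpha^{-1}\gamma\infty$. The remaining slash in $\tau$ by $\lambda_2$ does not touch $z$ but re-expands the $\tau$-dependence at the cusp $\lambda_2\infty = \gamma^{-1}\beta\infty$; correspondingly the diagonal pole $z = \lambda_1^{-1}\delta\lambda_2\tau$ forces the coefficient of $p^m$ in $\Phi$ to have its pole at the $z$-cusp $\lambda_1^{-1}\lambda_2\infty = \alpha^{-1}\beta\infty$, all $\Gamma^\alpha$-translates collapsing to a single cusp. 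These coefficients lie in $M^!_k(\Gamma^\alpha,\nu^\alpha)$, and once one checks that they are row-reduced at $\alpha^{-1}\gamma\infty$ in the sense of Section~\ref{SubsecOtherCusps}, the uniqueness of the van-bases (Proposition~\ref{Proposition: Second matrix determines cusp}) identifies them as scalar multiples of $\falpha{\nu^\alpha}{k}{m}{\alpha^{-1}\beta}{\alpha^{-1}\gamma}(z)$. The scalar is pinned down by comparing leading terms: the explicit singular term of~\eqref{FGenDef} has exponent denominator $w = w(\nu^\gamma) = \cuspwidth\gamma{\gamma\infty}$, and re-expanding the residue of the diagonal pole at the cusp $\gamma^{-1}\beta\infty$, of width $\cuspwidth\beta{\beta\infty}$, rescales it by exactly $\cuspwidth\gamma{\gamma\infty}/\cuspwidth\beta{\beta\infty}$.

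A symmetric computation, expanding $\Phi$ in powers of $q$ and interchanging the roles of the two slashes, identifies the coefficient of $q^n$ with $-\frac{\cuspwidth\gamma{\gamma\infty}}{\cuspwidth\alpha{\alpha\infty}}\galpha{\conj{\nu^\beta}}{2-k}{n}{\beta^{-1}\alpha}{\beta^{-1}\gamma}(\tau)$, whose pole cusp is $\beta^{-1}\alpha\infty$ and whose row-reduction cusp is $\beta^{-1}\gamma\infty$; the minus sign is the one built into the definition of an encoding generating function. Throughout I would use Lemma~\ref{Lemmamatricesforcusp} to track the roots of unity $\nu(\mu)\e^{2\pi\i r n}$ introduced by each change of expansion cusp and to confirm that they cancel, leaving only the real width ratios. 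Since $\Phi$ is a single meromorphic function carrying both expansions, the two coefficient sequences automatically form a modular grid; as a consistency check, clearing the common factor $\cuspwidth\gamma{\gamma\infty}$ recovers precisely the coefficient identity of Theorem~\ref{ThmDualityPolesAnywhere}.

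The step I expect to be the main obstacle is justifying that the $\tau$-slash relocates the $z$-pole cusp from $\alpha^{-1}\gamma\infty$ to $\alpha^{-1}\beta\infty$ cleanly, with no residual pole at $\alpha^{-1}\gamma\infty$. For fixed $\tau$ the function $\finfgen{\nu^\gamma}{k}$ carries, besides the diagonal pole, a bounded-order boundary pole in $z$ at the cusp $\infty$ coming from the cusp-form part of its $q$-expansion; these cusp forms vanish at every $\tau$-cusp other than $\infty$, so re-expanding in $\tau$ at $\gamma^{-1}\beta\infty$ (when it differs from $\infty$) annihilates exactly those contributions and removes the spurious pole, while if $\beta\infty = \gamma\infty$ the two cusps coincide and no conflict arises. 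Verifying this vanishing, together with the exact width-ratio normalization and the cancellation of the multiplier roots of unity, is the delicate part; once it is in place, the matching of pole cusps, row-reduction cusps, and modular data is routine.
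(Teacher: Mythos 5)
Your setup is sound --- the modular data of $\Phi$, the use of \eqref{Eqn_fcusp1}, the pole cusp $\alpha\inv\beta\infty$ versus row-reduction cusp $\alpha\inv\gamma\infty$, and the width ratios all match the statement --- but two steps fail as proposed. First, the assertion that ``since $\Phi$ is a single meromorphic function carrying both expansions, the two coefficient sequences automatically form a modular grid'' is precisely the point needing proof, not a free consequence: the two expansions converge on disjoint regions of $\H\times\H$ separated by the polar divisor $z=\alpha\inv g\beta\,\tau$, and the duality $a(m,n)=-b(n,m)$ is exactly the statement that the two iterated Fourier coefficient extractions agree. The paper's proof is devoted mostly to this: it defines $f'_m$ and $g'_n$ by limit Fourier integrals and justifies interchanging their order --- trivially when the relevant cusps are inequivalent, since the function is then holomorphic for both imaginary parts sufficiently large, and, when they coincide, only after subtracting the explicit rational term $\nu^\gamma(\beta)\,\frac{p^{(1-\varsigma')/w'}q^{\varsigma'/w'}}{q^{1/w'}-p^{1/w'}}\big|_{k,z}T$, whose coefficients account for the Laurent-polynomial parts $R_m$, $S_n$ permitted in the grid definition. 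Your proposal contains no substitute for this interchange argument, and your case analysis is inverted: you treat the coinciding-cusp case as the harmless one (``no conflict arises''), whereas that is exactly the case requiring the correction term, while the inequivalent-cusp case is the clean one.

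Second, your plan for what you identify as the main obstacle would prove something false in general. You claim the coefficient of $p^m$ has ``no residual pole at $\alpha\inv\gamma\infty$'' because cusp-form contributions are ``annihilated'' on re-expansion. But by \eqref{Defn: fmu}, the principal part of $\falpha {\nu^\alpha} k m {\alpha\inv\beta} {\alpha\inv\gamma}$ includes the error term $\falphaerror {\nu^\alpha} k m {\alpha\inv\beta} {\alpha\inv\gamma}$, built from coefficients of the row-reduced cusp form basis, and this term produces bounded-order poles at the row-reduction cusp $\alpha\inv\gamma\infty$; those poles vanish only when $S_{2-k}(\Gamma,\conj\nu)=\set{0}$, which is the hypothesis of Theorem \ref{ThmMatrixSymm}, not of this proposition, so verifying ``cleanness'' is both impossible and not what the identification requires. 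Relatedly, your normalization of the scalar $\cuspwidth{\gamma}{\gamma\infty}/\cuspwidth{\beta}{\beta\infty}$ by ``re-expanding the residue of the diagonal pole'' is left heuristic. The paper avoids both difficulties by proceeding one slash at a time: after slashing in $z$ alone, the $p$-expansion region is untouched, so \eqref{FGenF} and \eqref{Eqn_fcusp1} identify $f'_m = \falpha {\nu^\alpha} k m {\alpha\inv\gamma} {\alpha\inv\gamma}$ termwise with no pole-relocation analysis; the dual side $g'_n$, including its width factor, then comes for free from Theorem \ref{ThmDualityPolesAnywhere} once the grid property is established by the integral-interchange argument; and the stated result follows by applying this one-variable process a second time with $\gamma\inv\beta$ acting in $\tau$. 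Repairing your argument essentially forces you back onto that route.
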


\begin{proof} Define the function $\mathcal H'(z,\tau) = \finfgen {\nu^\gamma} k |_{k,z}\gamma\inv\alpha (z,\tau).$ This function is modular in the variable $z$ with respect to the multiplier $\nu^\alpha$, and  in the variable $\tau$ with respect to the multiplier  $\bar \nu^\gamma.$ Thus $\mathcal H'$ has a Fourier expansion in fractional powers of $q$ and $p$, with admissible exponents in $\Znu{\nu^\alpha}$ and $\Znu{\bar \nu^\gamma}$ respectively. The Fourier coefficients in one variable may be extracted by standard Fourier integrals in that variable. Specifically, 
for $m \in \Znu {\bar\nu^\gamma} $ and $n \in \Znu {\nu^\alpha}$, write
\begin{align*}
f'_m(z)=\lim_{\tau_0\to \i \infty}\int_{\tau_0}^{\tau_0+w_{\gamma\infty}} \frac{p^{-m}}{w_{\gamma\infty}}\mathcal H'(z,\tau)  \,d\tau, \\
g'_n(\tau)=-\lim_{z_0\to \i \infty}\int_{z_0}^{z_0+w_{\alpha\infty}} \frac{q^{-n}}{w_{\alpha\infty}}\mathcal H'(z,\tau)  \,dz.
\end{align*}
These functions are modular for their respective multipliers since modular transformations commute with the integral. The limit is required in the definition since the function  $\mathcal H'(z,\tau)$ has poles. If $z_0$ and $\tau_0$ are taken as variables while $z$ and $\tau$ are held constant, the integrals above may be discontinuous as the paths of integration cross any singularities.

The forms $\parent{f'_m, g'_n}_{m, n}$ constitute a new modular grid encoded by $\mathcal H'(z,\tau)$.
 Duality follows from the fact that in general the order of the Fourier integrals
\[\lim_{\tau_0\to \i \infty}\int_{\tau_0}^{\tau_0+w_{\gamma\infty}} \frac{p^{-m}}{w_{\gamma\infty}}\bullet  \,d\tau \ \ \text{and} \ \
\lim_{z_0\to \i \infty}\int_{z_0}^{z_0+w_{\alpha\infty}} \frac{q^{-n}}{w_{\alpha\infty}}\bullet  \,dz\]
are interchangeable. This statement is complicated somewhat due to the presences of poles in either variable whose locations are determined by the other variable. However, this complication can be resolved without too much difficulty.

To see this, recall that $\finfgen {\nu^\gamma} k$ has poles only at cusps and when $z=\mu \tau$ for some $\mu\in \Gamma^{\gamma}.$ Thus
$\mathcal H'$ has poles only at cusps or when $\alpha\inv \gamma z=\mu \tau$ for some $\mu\in \Gamma^{\gamma}.$ If the cusps
$\alpha\infty $ and $\gamma\infty$ are not equivalent, then the function is holomorphic for $z$ and $\tau$ in $\H$ with sufficiently large imaginary part, and so the integrals are interchangeable.

If the cusps $\alpha\infty $ and $\gamma \infty$ are equivalent, then by Lemma \ref{Lemmamatricesforcusp}, $\alpha\inv \gamma=\beta T$ for some $\beta \in \Gamma^\gamma$ and some upper triangular matrix $T$. Following \eqref{FGenDef}, let $\varsigma'=\varsigma(\nu^\gamma)$ and $w'=w(\nu^\gamma)=w_{\gamma\infty}.$
Then we see that
\[
\finfgen {\nu^\gamma}k (z,\tau) -\frac{p^{\frac{1 - \varsigma'} {w'}}q^{\frac {\varsigma'} {w'}}}{q^{\frac1{w'}}-p^{\frac 1 {w'}} }
\]
is holomorphic for $z$ and $\tau$ in $\H$ with sufficiently large imaginary parts. Thus, the same is true for
\[
\mathcal H'(z,\tau) -\nu^\gamma(\beta) \left. \frac{p^{\frac{1 - \varsigma'} {w'}}q^{\frac {\varsigma'} {w'}}}{q^{\frac1{w'}}-p^{\frac 1 {w'}}} \right|_{k,z} T.
\]
Therefore the order of the Fourier integrals for this function are also interchangeable. In particular, the $n$th Fourier coefficient of $f'_m$ is equal to the $m$th Fourier coefficient of $-g'_n$,
except for when $\alpha\inv \gamma$ corresponds to the infinite cusp and we consider coefficients belonging to the rational component $\nu^\gamma(\beta) \left. \frac{p^{\frac{1 - \varsigma'} {w'}}q^{\frac {\varsigma'} {w'}}}{q^{\frac1{w'}}-p^{\frac 1 {w'}}} \right|_{k,z} T $.

Given that $\mathcal H'(z,\tau)$ encodes a modular grid, \eqref{FGenF} and \eqref{Eqn_fcusp1} allow us to determine that  \[f'_m = \falpha {\nu^\alpha} k m {\alpha\inv\gamma} {\alpha\inv\gamma}. \]
Theorem \ref{ThmDualityPolesAnywhere} can then be used to determine that 
$\finfgen {\nu^\gamma} k (z, \tau)|_{k,z}\gamma\inv\alpha $
 encodes the grid
\[
	\left( \falpha {\nu^\alpha} k m {\alpha\inv\gamma} {\alpha\inv\gamma} (z), \ \frac{\cuspwidth {\gamma} {\gamma\infty}}{\cuspwidth \alpha {\alpha\infty}}\galpha {\conj\nu^\gamma} {2-k} n {\gamma\inv\alpha} {I}(\tau)\right)_{m, n}.
\]
Applying this process a second time, this time acting in the variable $\tau$ by the matrix $\gamma\inv\beta$, yields the result.
\end{proof}

We obtain an extra level of symmetry when there are no holomorphic forms in either space to row-reduce against, and we use two matrices $\alpha$ and $\beta$ which both commute with the group and multiplier.

\begin{theorem}\label{ThmMatrixSymm}
Let $k, \nu, \Gamma$, and $\widehat \Gamma$ be given as above, and suppose that $M_k(\Gamma,\nu)$ and $S_{2-k}(\Gamma,\overline\nu)$ are both trivial.
Suppose $\alpha, \beta \in \widehat \Gamma$ satisfy $\alpha \Gamma = \beta \Gamma$, $\Gamma^{\alpha}=\Gamma=\Gamma^{\beta}$, and $\nu^\alpha=\nu=\nu^\beta$. Then
\begin{equation*}
	\finfgen \nu {k} (z, \tau)\big|_{k,z}\alpha=\finfgen \nu {k} (z, \tau)\big|_{2-k,\tau}\beta\inv.
\end{equation*}
\end{theorem}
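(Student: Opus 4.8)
The plan is to reduce the stated identity to a single application of Proposition~\ref{GenAct}. Because the weight~$k$ action in the variable~$z$ commutes with the weight~$2-k$ action in the variable~$\tau$, and because $\beta\inv\beta = I$, applying $|_{2-k,\tau}\beta$ to both sides shows that the desired equality is equivalent to
\[
	\finfgen\nu k(z,\tau)\big|_{k,z}\alpha\big|_{2-k,\tau}\beta = \finfgen\nu k(z,\tau).
\]
Thus it suffices to identify the modular grid encoded by the left-hand side with the grid $\parent{\finf\nu k m, \ginf{\conj\nu}{2-k}n}_{m,n}$ encoded by $\finfgen\nu k$ itself. The advantage of this reformulation is that both transformations are bundled into one generating function, so that a single invocation of Proposition~\ref{GenAct} applies.

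To compute the left-hand side I would invoke Proposition~\ref{GenAct} with $\gamma = I$ and with $\alpha,\beta$ as given, so that $\finfgen{\nu^\gamma}k = \finfgen\nu k$ and the two slashes are exactly $|_{k,z}\alpha$ and $|_{2-k,\tau}\beta$. This realizes $\finfgen\nu k|_{k,z}\alpha|_{2-k,\tau}\beta$ as the generating function of the grid whose forms are
\[
	\frac{\cuspwidth\Gamma\infty}{\cuspwidth\Gamma{\beta\infty}}\,\falpha{\nu^\alpha}k m{\alpha\inv\beta}{\alpha\inv}(z)
	\quad\text{and}\quad
	\frac{\cuspwidth\Gamma\infty}{\cuspwidth\Gamma{\alpha\infty}}\,\galpha{\conj\nu^\beta}{2-k}n{\beta\inv\alpha}{\beta\inv}(\tau).
\]

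Next I would collapse these expressions using the three hypotheses. The conditions $\nu^\alpha = \nu = \nu^\beta$ (and hence $\conj\nu^\beta = \conj\nu$) turn every multiplier appearing above into $\nu$ or $\conj\nu$, and they force $\cuspwidth\Gamma{\alpha\infty} = \cuspwidth\Gamma{\beta\infty} = \cuspwidth\Gamma\infty$, so that the two width prefactors equal~$1$. The condition $\alpha\Gamma = \beta\Gamma$ gives $\alpha\inv\beta,\ \beta\inv\alpha\in\Gamma$, so the pole-indexing matrices $\alpha\inv\beta$ and $\beta\inv\alpha$ represent the cusp~$\infty$; in particular the transformed $f$-forms acquire their poles at~$\infty$ rather than at~$\alpha\inv\infty$, which is exactly what makes agreement with $\finfgen\nu k$ possible. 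Finally, $M_k(\Gamma,\nu) = \set{0}$ and $S_{2-k}(\Gamma,\conj\nu) = \set{0}$ simultaneously trivialize both row-reduction projections and make every error term vanish, while the index identities feeding into~\eqref{Equation: IndexDef2} collapse to $\tI\nu k = \Znu{\conj\nu}_{>0}$ and $\tJ{\conj\nu}{2-k} = \Znu\nu_{\geq 0}$, matching the index sets of the grid of $\finfgen\nu k$.

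After these reductions each transformed $f$-form is the unique (because $M_k(\Gamma,\nu) = \set{0}$) weakly holomorphic form whose principal part is read off from $\qalpha\nu k m{\alpha\inv\beta}$, and dually for the $g$-forms. The main obstacle I anticipate is the root-of-unity bookkeeping of Lemma~\ref{Lemmamatricesforcusp}: rewriting the principal part $\qalpha\nu k m{\alpha\inv\beta}$ in terms of the standard representative $I$ for the cusp~$\infty$ introduces a factor $\nu(\alpha\inv\beta)^{\pm 1}$, and likewise on the $g$-side from $\beta\inv\alpha$. The delicate step is to check that these factors are genuinely inverse to one another across the $f$- and $g$-sides, so that the transformed grid coincides \emph{exactly}, and not merely up to a global scalar, with $\parent{\finf\nu k m, \ginf{\conj\nu}{2-k}n}_{m,n}$; this is precisely where the compatibility of $\nu^\alpha = \nu = \nu^\beta$ with $\alpha\inv\beta\in\Gamma$ must be used. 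Granting this cancellation, the two encoded grids agree, hence $\finfgen\nu k|_{k,z}\alpha|_{2-k,\tau}\beta = \finfgen\nu k$, and the theorem follows.
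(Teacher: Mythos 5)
Your outline retraces the paper's own proof almost verbatim: the paper likewise reduces the claim to $\finfgen \nu k (z,\tau)\big|_{k,z}\alpha\big|_{2-k,\tau}\beta = \finfgen \nu k (z,\tau)$, invokes Proposition~\ref{GenAct} once with $\gamma = I$ to obtain exactly the grid you display, kills the width prefactors using $\Gamma^\alpha = \Gamma$ together with $\alpha\inv\beta \in \Gamma$, and then identifies the transformed forms with $\finf \nu k m$ and $\ginf {\conj\nu}{2-k} n$ via the triviality of $M_k(\Gamma,\nu)$ and $S_{2-k}(\Gamma,\conj\nu)$, which (as you say) makes all error terms and row-reductions vacuous. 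Up to your last step you are aligned with the paper step for step.

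The one step you defer, however, does not close in the way you anticipate, and it is worth seeing exactly what happens there. Set $\gamma = \alpha\inv\beta \in \Gamma$. From the definition, the component of $\qalpha \nu k m \gamma$ at the representative $I = \gamma\inv \cdot \gamma \cdot \smatrix 1001$ is $\nu(\gamma\inv)\, q^{-m}$, so (error terms and projections being trivial) the $f$-side form is $\falpha \nu k m \gamma {\alpha\inv} = \nu(\gamma\inv)\, \finf \nu k m$; dually, the component of $\qalpha {\conj\nu}{2-k} n {\gamma\inv}$ at $I$ is $\conj\nu(\gamma)\, q^{-n}$, giving $\galpha {\conj\nu}{2-k} n {\gamma\inv}{\beta\inv} = \conj\nu(\gamma)\, \ginf {\conj\nu}{2-k} n$. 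Since $\abs{\nu(\gamma)} = 1$, these two factors are \emph{equal} (both $\nu(\gamma)\inv$ in the integral-weight case, up to the sign ambiguity of Proposition~\ref{nuProp}), not inverse to one another; indeed equality is forced, since if the $f$'s were scaled by $c$ and the $g$'s by $c\inv$, the two expansions of the single function $\finfgen \nu k \big|_{k,z}\alpha\big|_{2-k,\tau}\beta$ would be inconsistent. The upshot is $\finfgen \nu k \big|_{k,z}\alpha\big|_{2-k,\tau}\beta = \nu(\alpha\inv\beta)\inv\, \finfgen \nu k$, so the asserted identity holds precisely when $\nu(\alpha\inv\beta) = 1$ (for instance $\beta = \alpha$, the case of real interest, or trivial $\nu$). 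No cancellation between the two sides is available to remove this scalar: note that for integral weight, $\nu^{\alpha\gamma} = \nu$ follows automatically from $\nu^\alpha = \nu$ for every $\gamma \in \Gamma$, so taking $\alpha = I$ and $\beta = \gamma \in \Gamma$ with $\nu(\gamma) \neq 1$ satisfies every stated hypothesis yet scales the generating function by $\nu(\gamma)$. You should know that the paper's own proof elides exactly this scalar, asserting $\falpha \nu k m {\gamma\inv}{\alpha\inv} = \finf \nu k m$ without comment; your instinct to flag the step as the delicate one was correct, but the honest resolution is to verify (or hypothesize) $\nu(\alpha\inv\beta) = 1$, not to hope the two root-of-unity factors cancel against each other.
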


\begin{proof}
Proposition \ref{GenAct} shows that
\[ \finfgen {\nu} k (z, \tau)|_{k,z}\alpha\,|_{2-k,\tau}\beta \]
encodes the grid
\begin{equation}\label{72Display}
\left( \frac{\cuspwidth I {\infty}}{\cuspwidth \beta{\beta\infty}}\falpha {\nu^\alpha} k m {\alpha\inv\beta} {\alpha\inv} (z), \
\frac{\cuspwidth I {\infty}}{\cuspwidth \alpha {\alpha\infty}}\galpha {\conj\nu^{\beta}} {2-k} n {\beta\inv \alpha} {\beta\inv}(\tau)\right)_{m, n}.
\end{equation}
Let $\gamma=\alpha\inv \beta$. Since $\Gamma^\alpha=\Gamma,$ and $\gamma\in \Gamma,$ we have that $\cuspwidth \beta {\beta\infty}=\cuspwidth \alpha {\alpha\infty}=\cuspwidth I {\infty}$. Thus the grid in \eqref{72Display} is equal to the grid
\begin{equation*}
	\left( \falpha {\nu} k m {\gamma} {\alpha\inv} (z), \
	\galpha {\conj\nu} {2-k} n {\gamma\inv } {\beta\inv}(\tau)\right)_{m, n}.
\end{equation*}
Since $M_k(\Gamma,\nu)$ and $S_{2-k}(\Gamma,\overline\nu)$ are both trivial, and using again that $\gamma\in \Gamma$, we have that $\falpha {\nu} k m {\gamma\inv} {\alpha\inv} (z) = \finf \nu k m$ and $\galpha {\conj \nu} {2-k} n {\gamma}{\beta\inv}(\tau)=\ginf {\conj\nu} {2-k} n$. Therefore
\[ \finfgen \nu k (z, \tau)|_{k,z}\alpha|_{2-k,\tau}\beta = \finfgen \nu k (z, \tau).\]
The result follows.
\end{proof}

Theorem \ref{ThmMatrixSymm} can be adapted to include certain trace operations between groups. Suppose that $\Gamma'$ is a subgroup of $\widehat \Gamma$ satisfying
\[
	\Gamma\subseteq\Gamma'\subseteq\widehat \Gamma,
\]
and that $\nu$ extends to a consistent multiplier $\nu'$ on $\Gamma'$. Then define the trace $\tr \nu {\nu'} :M_k^!(\Gamma,\nu)\to M_k^!(\Gamma',\nu') $
by
\[
f|_k \tr \nu {\nu'} = \sum_{[\gamma]\in \Gamma \backslash \Gamma'} \nu'(\gamma)^{-1} f|_k\gamma.
\]
This is well-defined as long as $f$ is modular for $\Gamma$ with multiplier $\nu$.

With this definition, we have the following theorem.
\begin{theorem}\label{ThmTrace}
Let $\Gamma $ be a commensurable subgroup, and let $\widehat{\Gamma}$ be a maximal commensurable subgroup containing $\Gamma$. Let $k\in \frac 1 2 \Z,$ and let $\nu$ be any consistent weight $k$ multiplier. Suppose that $M_k(\Gamma,\nu)$ and $S_{2-k}(\Gamma,\overline\nu)$ are both trivial. If $\nu$ extends to a multiplier $\nu'$ defined on a group $\Gamma'$ between $\Gamma$ and $\widehat\Gamma,$ then
\[
\finfgen \nu k (z, \tau)|_{k,z} \tr\nu{\nu'}\, = \, \finfgen \nu k (z, \tau)|_{2-k,\tau} \tr{\conj \nu}{\conj\nu'}\, = \, \frac{\cuspwidth{I}{\infty}}{\cuspwidth{I}{\infty}'}\finfgen {\nu'} k (z, \tau),
\]
where $\cuspwidth{I}{\infty}$ and $\cuspwidth{I}{\infty}'$ are the widths of the infinite cusps of $\Gamma$ and $\Gamma'$ respectively.
\end{theorem}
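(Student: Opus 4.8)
The plan is to prove both equalities at once by showing that \emph{each} of the two traces equals $\frac{\cuspwidth{I}{\infty}}{\cuspwidth{I}{\infty}'}\finfgen{\nu'}{k}$. First note that the hypotheses descend to the larger group: since $\Gamma\subseteq\Gamma'$, we have $M_k(\Gamma',\nu')\subseteq M_k(\Gamma,\nu)=\{0\}$ and $S_{2-k}(\Gamma',\conj{\nu'})\subseteq S_{2-k}(\Gamma,\conj\nu)=\{0\}$, so the construction of Section~\ref{SecGeneratingFunctions} applies verbatim to $(\Gamma',\nu')$ and in particular $\finfgen{\nu'}{k}$ carries no holomorphic correction terms in \eqref{FGenDef}. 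The key idea is to \emph{freeze} the second variable: fix a generic point $\tau_0\in\H$ and regard each of $\finfgen\nu k|_{k,z}\tr\nu{\nu'}$, $\finfgen\nu k|_{2-k,\tau}\tr{\conj\nu}{\conj\nu'}$, and $\frac{\cuspwidth{I}{\infty}}{\cuspwidth{I}{\infty}'}\finfgen{\nu'}{k}$ as meromorphic modular forms in the single variable $z$, and then compare them.

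The crucial structural observation is that, \emph{precisely because both vanishing hypotheses hold}, for fixed $\tau_0\in\H$ the function $\finfgen\nu k(\cdot,\tau_0)$ is holomorphic at every cusp and has poles only at the interior points of the orbit $\Gamma\tau_0$. Indeed, expanding in $q$ on the region $\im z>\im\tau_0$ gives $\finfgen\nu k(z,\tau_0)=-\sum_{n\in\tJ{\conj\nu}{2-k}}\ginf{\conj\nu}{2-k}n(\tau_0)q^n$, and the triviality of $S_{2-k}(\Gamma,\conj\nu)$ forces every index $n$ to be nonnegative, so there is no pole as $z\to\infty$; holomorphy at the remaining cusps is part of Theorem~\ref{GenFunctions}. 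Moreover the leading Cauchy kernel in \eqref{FGenDef} shows that near the diagonal $\finfgen\nu k(z,\tau)\sim\frac{\cuspwidth{I}{\infty}}{2\pi\i(z-\tau)}$, so the residue in $z$ at $z=\tau_0$ is exactly $\frac{\cuspwidth{I}{\infty}}{2\pi\i}$, a quantity \emph{proportional to the width}. The same statements hold for $\Gamma'$, with residue $\frac{\cuspwidth{I}{\infty}'}{2\pi\i}$.

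I would then carry out the comparison in $z$. The $z$-trace $\finfgen\nu k|_{k,z}\tr\nu{\nu'}$ is a finite sum of slashes of $\finfgen\nu k(\cdot,\tau_0)$, hence is modular in $z$ for $(\Gamma',\nu')$, holomorphic at every cusp, and has poles only along $\Gamma'\tau_0=\bigcup_{[\gamma]}\gamma\inv\Gamma\tau_0$. For generic $\tau_0$ only one coset representative can place a pole at a given point $\epsilon\tau_0$ (with $\epsilon\in\Gamma'$), namely the one with $\gamma\in\Gamma\epsilon\inv$, so the residue there is a single automorphy-twisted copy of $\frac{\cuspwidth{I}{\infty}}{2\pi\i}$. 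On the other hand $\frac{\cuspwidth{I}{\infty}}{\cuspwidth{I}{\infty}'}\finfgen{\nu'}{k}(\cdot,\tau_0)$ is also $(\Gamma',\nu')$-modular with poles along $\Gamma'\tau_0$, and its residue at $\epsilon\tau_0$ is the same twist of $\frac{\cuspwidth{I}{\infty}}{\cuspwidth{I}{\infty}'}\cdot\frac{\cuspwidth{I}{\infty}'}{2\pi\i}=\frac{\cuspwidth{I}{\infty}}{2\pi\i}$; this is exactly the computation that pins down the constant $\frac{\cuspwidth{I}{\infty}}{\cuspwidth{I}{\infty}'}$. Consequently the difference of the two functions is holomorphic on $\H$, holomorphic at every cusp, and modular for $(\Gamma',\nu')$, so it lies in $M_k(\Gamma',\nu')=\{0\}$; as $\tau_0$ was arbitrary and both sides are meromorphic, this gives the second equality. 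The first equality follows by the identical argument applied to $\finfgen\nu k|_{2-k,\tau}\tr{\conj\nu}{\conj\nu'}$: once $\tau_0$ is frozen this is a finite sum of the forms $\finfgen\nu k(\cdot,\delta\tau_0)$, hence modular in $z$ only for $(\Gamma,\nu)$, but the pole set is again $\bigcup_{[\delta]}\Gamma\delta\tau_0=\Gamma'\tau_0$ and the same residue matching shows that its difference with $\frac{\cuspwidth{I}{\infty}}{\cuspwidth{I}{\infty}'}\finfgen{\nu'}{k}$ lies in $M_k(\Gamma,\nu)=\{0\}$.

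The step I expect to be the main obstacle is the residue bookkeeping: one must verify that along each diagonal $z=\epsilon\tau_0$ exactly one coset contributes and that the automorphy factors together with the multiplier values $\nu'(\gamma)\inv$ (respectively $\conj{\nu'}(\delta)\inv$) combine to reproduce \emph{exactly} the twisted residue of $\finfgen{\nu'}{k}$, including the half-integral-weight sign conventions. A second delicate point — and the reason both hypotheses $M_k(\Gamma,\nu)=\{0\}$ and $S_{2-k}(\Gamma,\conj\nu)=\{0\}$ are genuinely needed — is the holomorphy at the cusp $\infty$ for fixed $\tau_0$: without the triviality of the cusp-form (respectively holomorphic-form) spaces, the frozen function would acquire a true pole at $\infty$ and the difference could fail to be a holomorphic modular form. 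Finally, I would remark that when $\Gamma$ is normal in $\Gamma'$ — as for the Atkin--Lehner extensions that motivate the assumption that $\nu$ extends to $\nu'$ — the first equality can alternatively be obtained by applying Theorem~\ref{ThmMatrixSymm} to each coset representative $\gamma$ and summing against the weights $\nu'(\gamma)\inv$.
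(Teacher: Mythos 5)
Your proposal is correct, but it proceeds by a genuinely different route than the paper. The paper's proof is expansion-theoretic: writing $\finfgen{\nu}{k}(z,\tau)=\sum_{m\in\tI{\nu}{k}}\finf{\nu}{k}{m}(z)\,p^m$ and applying the trace term by term, it shows that $\finf{\nu}{k}{m}|_k\tr{\nu}{\nu'}$ equals $\frac{\cuspwidth{I}{\infty}}{\cuspwidth{I}{\infty}'}\finf{\nu'}{k}{m}$ when $m\in\tI{\nu'}{k}$ and vanishes otherwise; the width ratio arises there as the count $[\Znu{\nu'}:\Znu{\nu}]$ of cosets of $\Gamma\backslash\Gamma'$ preserving the cusp $\infty$, and the identification of the traced form uses the triviality hypotheses through the fact that a form in $\spcfinf{k}(\Gamma',\nu')$ is then determined by its principal part (a symmetric argument with the $\ginf{\conj\nu}{2-k}{n}$ handles the $\tau$-trace). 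You instead freeze $\tau_0$ and compare one-variable meromorphic forms by matching the simple poles along $\Gamma'\tau_0$, with residue at $\epsilon\tau_0$ equal to $\nu'(\epsilon)\,j(\epsilon,\tau_0)^{k-2}\,\frac{\cuspwidth{I}{\infty}}{2\pi\i}$ on both sides --- I checked this bookkeeping and it does close up, with the consistency of $\nu'$ (Proposition \ref{nuProp}) absorbing the half-integral cocycle signs --- and you then kill the difference using $M_k(\Gamma',\nu')=\{0\}$ (respectively $M_k(\Gamma,\nu)=\{0\}$ for the $\tau$-trace, where the frozen sum is only $(\Gamma,\nu)$-modular). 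Your route makes the constant $\frac{\cuspwidth{I}{\infty}}{\cuspwidth{I}{\infty}'}$ conceptually transparent as a ratio of diagonal residues and treats both equalities by a single mechanism, whereas the paper's route stays entirely at the level of $q$-expansions, avoids the analytic pole structure of $\finfgen{\nu}{k}$ altogether, and yields the individually useful trace identities on basis elements. One caveat you should make explicit: the diagonal residue $\frac{\cuspwidth{I}{\infty}}{2\pi\i}$ cannot be read off \eqref{FGenDef} at an arbitrary diagonal point, since the double series is only guaranteed to converge where $\im z\cdot\im\tau$ is large; compute the residue high in the half-plane and propagate it along the connected diagonal by continuation (or invoke the rational expression $\calK(z,\tau)/(H_\Gamma(z)-H_\Gamma(\tau))$ from the proof of Theorem \ref{GenFunctions}, which is what actually establishes simplicity of the poles), and keep your genericity restriction on $\tau_0$ to rule out coincidences among the points $\epsilon\tau_0$ and elliptic stabilizers before concluding by meromorphy in $\tau_0$.
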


\begin{proof}
Since $M_k(\Gamma,\nu)$ and $S_{2-k}(\Gamma,\overline\nu)$ are both trivial, it follows that $M_k(\Gamma',\nu')$ and $S_{2-k}(\Gamma',\overline\nu')$ are both trivial as well because the latter spaces are subspaces of the former. Therefore we have that
\begin{align*}
\tI \nu k &=\Znu {\conj \nu}_{>0}, & \tJ {\conj\nu} {2-k} &=\Znu {\nu}_{\geq 0},\\
\tI {\nu'} k &=\Znu {\conj \nu'}_{>0}, & \tJ {\conj\nu'} {2-k} &=\Znu {\nu'}_{\geq 0}.
\end{align*}
Suppose $m\in \tI \nu k$. Then $\finf \nu k m=q^{-m}+O(1)$.  If $[\gamma]\in \Gamma\backslash \Gamma'$ does not preserve the cusp $\infty$ of $\Gamma,$ then $\finf \nu k m|_{k,\nu'}\gamma =O(1).$ If
  $[\gamma]\in \Gamma\backslash \Gamma'$ preserves the cusp $\infty$ of $\Gamma$, then we may choose $\gamma$ in the coset to be upper triangular. Therefore
${\nu'}\inv(\gamma) \finf \nu k m |_k \gamma = \zeta q^{-m}+O(1),$ where $\zeta$ is some root of unity and $\zeta=1$ if $m\in \tI {\nu'} k$.

Since $\Gamma\subseteq\Gamma'$, we have that $\Znu {\nu}\subseteq\Znu {\nu'}$ and $\Znu {\conj\nu}\subseteq\Znu {\conj\nu'}$ with
\[
[\Znu {\nu'}:\Znu {\nu}] \, =\, [\Znu {\conj\nu'}:\Znu {\conj\nu}] \, =\, \frac{\cuspwidth{I}{\infty}}{\cuspwidth{I}{\infty}'},
\]
where $[\Znu {\nu'}:\Znu {\nu}]$ is the index of $\Znu {\nu}$ inside $\Znu {\nu'}$ as (possibly shifted) lattices.
The trace $\tr \nu {\nu'}$ will include $[\Znu {\nu'}:\Znu {\nu}]$ matrices which preserve the cusp $\infty$ of $\Gamma.$
Thus, if $m$ is in $\tI {\nu'} k$, then it follows that the form $\finf \nu k m |_k \tr \nu {\nu'}$ has Fourier expansion \[\finf \nu k m |_k \tr \nu {\nu'} = [\Znu {\nu'}:\Znu {\nu}] q^{-m}+O(1)\] and is in $\spcfinf k (\Gamma', \nu')$. Since the space has no holomorphic forms, it must be true that
\[\finf \nu k m |_k \tr \nu {\nu'} = \frac{\cuspwidth{I}{\infty}}{\cuspwidth{I}{\infty}'}\finf {\nu'} k m.
\]

On the other hand, if $m$ is not in $\tI {\nu'} k$, then $\finf \nu k m |_k \tr \nu {\nu'} = O(1)$ and is in $\spcfinf k (\Gamma', \nu').$ Since the space has no holomorphic forms, it follows that
\[
	\finf \nu k m |_k \tr \nu {\nu'} = 0.
\]

This implies that
\[
	\finfgen \nu k (z, \tau)|_{k,z} \tr\nu{\nu'}\, = \, \frac{\cuspwidth{I}{\infty}}{\cuspwidth{I}{\infty}'}\finfgen {\nu'} k (z, \tau).
\]
A similar argument, considering the functions $\ginf {\conj\nu} {2-k} n |_k \tr {\conj\nu} {\conj\nu'}$, shows that
\[
	\finfgen \nu k (z, \tau)|_{2-k,\tau} \tr{\conj \nu}{\conj\nu'}\, = \, \frac{\cuspwidth{I}{\infty}}{\cuspwidth{I}{\infty}'}\finfgen {\nu'} k (z, \tau),
\]
as desired. In this case we may replace each $O(1)$ in the argument above with $o(1)$ since the forms vanish away from $\infty$ and have no constant other than for the $n=0$ form.
\end{proof}

\subsection{Hecke operators}
If $\mathcal F(z,\tau)$ generates a modular grid for a congruence subgroup $\Gamma$, then by combining the ideas of the previous subsections, we can act on $\mathcal F$ by Hecke operators. When there are no cusp forms in the ambient spaces of modular forms, this action demonstrates a striking symmetry. For simplicity, we restrict our attention to $k$ an even integer, $\Gamma\supseteq\Gamma_0(N)$ for some positive integer $N$, and a multiplier $\nu$ which acts trivially on $\Gamma_0(N)$. If $M$ is coprime to $N$, then the action of the Hecke operator $T_M$ on $M^!_k(\Gamma,\nu)$ is an endomorphism given by
\begin{equation}\label{HeckeDef}
	f\big|_k T_M= \sum_{\substack{ad=M\\
	b \bmod{d} }} f\big|_{k} \left(\begin{smallmatrix}a&b\\0&d\end{smallmatrix}\right).
\end{equation}
\begin{remark}
Our normalization of the Hecke operator is $M^{1-k/2}$ times the usual Hecke operator that preserves integrality of coefficients when $k>0$.
\end{remark}
\noindent If $f(z)=\sum_{n}a(n)q^n,$ then
\begin{equation}\label{EqnHecke}
f(z)\big|_k T_M= \sum_{n} \sum_{d\,|\,\gcd(n,M)} d^{k/2}\left(\tfrac{M}{d}\right)^{1-k/2}a\parent{\tfrac{Mn}{d^2}} q^n.
\end{equation}

We may apply this operator to the generating function $\finfgen \nu {2-k} (z, \tau)$ with respect to either variable. In either case this will result in a new modular grid.
As claimed in Theorem \ref{HeckeSym}, under certain conditions the action is the same in either variable.

\begin{theorem}[Theorem \ref{HeckeSym}]
Let $k$ be a positive even integer, and let $\Gamma \supseteq \Gamma_0(N)$ be a commensurable subgroup equipped with a multiplier $\nu$ that acts trivially on $\Gamma_0(N)$. Suppose that $S_k(\Gamma, \overline \nu)$ is trivial.
If $M$ is a positive integer coprime to $N$, then
 \[
\finfgen \nu {2-k} (z, \tau)\big|_{k,\tau}T_{M}= \finfgen \nu {2-k} (z, \tau)\big|_{2-k,z}T_{M}.
 \]
\end{theorem}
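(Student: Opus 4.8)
The plan is to move the Hecke operator from the $\tau$ variable to the $z$ variable by combining the matrix symmetry of Theorem \ref{ThmMatrixSymm} with the trace symmetry of Theorem \ref{ThmTrace}, and to record a concrete coefficient identity as the target to be verified. First I would make the statement concrete at the level of Fourier coefficients. Writing $c(m,n)$ for the coefficient of $p^m q^n$ in $\finfgen \nu {2-k}(z,\tau)$, the two expansions in Theorem \ref{GenFunctions} give $c(m,n) = \ainf \nu {2-k} m n = -\binf {\conj \nu} k n m$ by Theorem \ref{ThmDualityDetail}. Applying $|_{k,\tau}T_M$ to the expansion $-\sum_n \ginf {\conj\nu} k n(\tau)q^n$ and applying $|_{2-k,z}T_M$ to the expansion $\sum_m \finf \nu {2-k} m(z)p^m$, and reading off coefficients via \eqref{EqnHecke}, the desired equality becomes the arithmetic identity
\[
\sum_{d \mid \gcd(m,M)} d^{k/2}\left(\tfrac{M}{d}\right)^{1-k/2} c\bigl(\tfrac{Mm}{d^2},\, n\bigr) \;=\; \sum_{d \mid \gcd(n,M)} d^{1-k/2}\left(\tfrac{M}{d}\right)^{k/2} c\bigl(m,\, \tfrac{Mn}{d^2}\bigr)
\]
for all admissible $m \in \tI \nu {2-k}$ and $n \in \tJ {\conj\nu} k$. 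This is a self-adjointness statement for $T_M$ against the coefficient array of the grid; the two sides differ term by term precisely because the acting weight is $k$ in one variable and $2-k$ in the other.

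To prove the identity I would exploit the fact, hinted at in the text, that $T_M$ decomposes into the operations already analyzed. Since $M$ is coprime to $N$, the double coset $\Gamma \smatrix 1 0 0 M \Gamma$ admits upper-triangular representatives $\smatrix a b 0 d$ with $\gcd(a,N)=1$, and $T_M$ factors as a scaling matrix action (the action of $\smatrix M 0 0 1$, after conjugating $\Gamma$ into a commensurable group) followed by a trace $\tr {\nu} {\nu'}$ to a group $\Gamma'$ with $\Gamma \subseteq \Gamma' \subseteq \widehat\Gamma$. The matrix part transfers between variables by Theorem \ref{ThmMatrixSymm} (relating $|_{2-k,z}\alpha$ to $|_{k,\tau}\beta\inv$ for $\alpha\Gamma = \beta\Gamma$), and the trace part transfers by Theorem \ref{ThmTrace}, which already asserts $\finfgen \nu {2-k}|_{2-k,z}\tr\nu{\nu'} = \finfgen \nu {2-k}|_{k,\tau}\tr{\conj\nu}{\conj\nu'}$ up to the width ratio $\cuspwidth \Gamma \infty / \cuspwidth {\Gamma'} \infty$. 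The mechanism underlying both is the adjugate involution $\delta \mapsto \det(\delta)\,\delta\inv$, which carries one complete set of coset representatives to another for the \emph{same} double coset and which acts identically to $\delta$ on $\H$; consequently $|_{k,\tau}T_M$ and $|_{2-k,z}T_M$ both spread the simple diagonal pole of $\finfgen \nu {2-k}$ to the single locus $z = \mu\tau$, $\mu \in \Gamma \smatrix 1 0 0 M \Gamma$, and the constant-residue (reproducing-kernel) behavior visible from the leading term of \eqref{FGenDef} matches the two weight normalizations across $\delta \leftrightarrow \det(\delta)\delta\inv$. The hypotheses $S_k(\Gamma,\conj\nu)=\{0\}$ and $2-k \le 0$ (so that $\J {\conj\nu} k = \emptyset$ and no holomorphic obstruction forms intervene) are exactly what allow Theorems \ref{ThmMatrixSymm} and \ref{ThmTrace} to apply as clean equalities, with no residual holomorphic correction.

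The main obstacle is the bookkeeping forced by the determinant $M \neq 1$. The Hecke matrices do not lie in $\widehat\Gamma$, so realizing the scaling step requires conjugating into a genuinely different commensurable group, re-running the generating-function construction there, and tracking the width factors $\cuspwidth \Gamma \infty / \cuspwidth {\Gamma'} \infty$ through both variables to confirm they cancel rather than accumulate. One must also verify that this decomposition reproduces the normalization \eqref{HeckeDef} simultaneously in weight $k$ (in $\tau$) and weight $2-k$ (in $z$), i.e.\ that the per-term factors $d^{k/2}(M/d)^{1-k/2}$ and $d^{1-k/2}(M/d)^{k/2}$ are interchanged correctly by the involution, and that the multiplier $\nu$, being trivial on $\Gamma_0(N)$ with $\gcd(M,N)=1$, induces no spurious roots of unity under the slash operators \eqref{Eqn_fcusp1}. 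I would isolate the case $k=2$, where weight $2-k=0$ contributes constants and weight-$2$ Eisenstein series must be handled separately, as the only place where the triviality hypotheses need supplementing; everywhere else the symmetry follows formally from the two transfer theorems.
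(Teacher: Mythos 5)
Your reduction of the theorem to the coefficient identity
\[
\sum_{d \mid \gcd(m,M)} d^{k/2}\left(\tfrac{M}{d}\right)^{1-k/2} c\left(\tfrac{Mm}{d^2},\, n\right) \;=\; \sum_{d \mid \gcd(n,M)} d^{1-k/2}\left(\tfrac{M}{d}\right)^{k/2} c\left(m,\, \tfrac{Mn}{d^2}\right)
\]
is correct, and your instinct that the adjugate involution $\delta \mapsto \det(\delta)\delta\inv$ underlies the symmetry is sound. But the proposed engine for proving it---decomposing $T_M$ as a scaling slash followed by a trace and invoking Theorems \ref{ThmMatrixSymm} and \ref{ThmTrace}---does not run, for two concrete reasons. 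First, Theorem \ref{ThmMatrixSymm} requires $\alpha, \beta \in \widehat{\Gamma}$ with $\Gamma^\alpha = \Gamma = \Gamma^\beta$ and $\nu^\alpha = \nu = \nu^\beta$; the scaling matrix $\smatrix M001$ (normalized into $\SL_2(\bbR)$) satisfies none of these: it lies outside every maximal commensurable group $\widehat\Gamma$ containing $\Gamma$ when $\gcd(M,N)=1$, and it conjugates $\Gamma$ to a genuinely different group. Second, and more seriously, in the standard factorization $f|_kT_M = \parent{f|_k\smatrix 100M}\big|\tr{}{}$ the trace goes \emph{up} from the intermediate group $\Gamma \cap \smatrix 100M \Gamma \smatrix 100M\inv$ (of level roughly $NM$) to $\Gamma$, so Theorem \ref{ThmTrace}'s triviality hypotheses must hold for that \emph{smaller} group, not for $\Gamma$. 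They generically fail there: already for $N = 1$, $k = 4$, $M = 11$ one has $S_4(\SL_2(\bbZ)) = \set{0}$ but $S_4(\Gamma_0(11)) \neq \set{0}$, so the ``clean equality with no residual holomorphic correction'' you assert is false precisely at the step where you need it. Your hypotheses $S_k(\Gamma, \conj\nu) = \set{0}$ and $2 - k \leq 0$ control the spaces over $\Gamma$ only; they say nothing about level $NM$.

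The paper's proof never leaves $\Gamma$, which is how it avoids this trap. It uses that $T_M$ (for $\gcd(M,N) = 1$) is an endomorphism of $\spcginf k \parent{\Gamma, \conj\nu}$ and of $\spcfinf{2-k}(\Gamma, \nu)$, that under the stated hypotheses a form in $\spcginf k \parent{\Gamma, \conj\nu}$ is determined by its principal part at $\infty$ (the Eisenstein subtlety at the constant term being exactly what the hypothesis $S_k(\Gamma,\conj\nu) = \set{0}$ neutralizes, with $S_{2-k}$ automatically trivial since $k \geq 2$), and then computes $\ginf{\conj\nu}{k}{n}|_kT_M$ and $\finf{\nu}{2-k}{m}|_{2-k}T_M$ explicitly in the canonical bases by applying \eqref{EqnHecke} to principal parts---the $d \leftrightarrow M/d$ swap there is your involution, made concrete. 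Comparing the four expansions of the two slashed generating functions then yields your coefficient identity directly. If you want to salvage your transfer strategy, you would need a version of Proposition \ref{GenAct} for matrices in $\GL_2^+(\bbQ)$ in which you sum over a full set of coset representatives \emph{before} invoking any triviality (so that only $\Gamma$-level hypotheses are used, e.g.\ via a residue-cancellation argument for the difference of the two slashed kernels); but that is new work not contained in the cited theorems, and it essentially reproduces the paper's direct computation.
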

\begin{proof}
If $\im(\tau)>\im (z)$, then we may use \eqref{EqnHecke} to expand $\finfgen \nu {2-k} (z, \tau)\big|_{k,\tau}T_{M}
$ as a series in powers of $p$ to obtain

\begin{align*}
	\finfgen \nu {2-k} (z, \tau)\big|_{k,\tau}T_{M} &= \left(\sum_{m \in \tI { \nu} {2-k}} 	\finf {\nu} {2-k} {m} (z)p^m\right)\big|_{k,\tau}T_{M}\\
	 &= \sum_{m \in \tI {\nu} {2-k}} \sum_{d\,|\,\gcd(m,M)}
	d^{1-k/2}\left(\tfrac{M}{d}\right)^{k/2}	\finf {\nu} {2-k} {\tfrac{Mm}{d^2}} (z)p^m.\\
\end{align*}

If $\im(\tau)<\im (z)$, then we may expand $\finfgen \nu {2-k} (z, \tau)\big|_{k,\tau}T_{M}
$ as a series in powers of $q$ to obtain
\begin{align*}
	\finfgen \nu {2-k} (z, \tau)\big|_{k,\tau}T_{M} &= -\sum_{n \in \tJ {\overline\nu} {k}} \ginf {\overline\nu} {k} n (\tau)q^n\big|_{k,\tau}T_{M}.
\end{align*}
By hypothesis, $S_k(\Gamma,\overline \nu)$ is trivial, so $\widehat M_k^{(\infty)}(\Gamma,\overline\nu) \cap M_k(\Gamma, \overline{\nu})$ is spanned by at most a single Eisenstein series which has a constant at $\infty$ and vanishes at all other cusps. Thus a form in $\spcginf k \parent{\Gamma, \bar\nu}$ is determined uniquely by its principal part at infinity. The Hecke operator $T_m$ preserves the space $\spcginf k \parent{\Gamma, \bar\nu}$, so using \eqref{EqnHecke} with $d$ replaced with $M/d$ and considering the principal parts, we find for $n\geq0$ that
\begin{align*}
\ginf {\bar\nu} {k} n \big|_k T_M=\sum_{d\,|\,\gcd(n,M)} \left(\tfrac{M}{d}\right)^{k/2}d^{1-k/2} \ginf {\overline\nu} {k} {\tfrac{nM}{d^2}}.
\end{align*}
It follows that
\begin{align*}
	\finfgen \nu {2-k} (z, \tau)\big|_{k,\tau}T_{M} &= -\sum_{n \in \tJ {\overline\nu} {k}} \ginf {\overline\nu} {k} n (\tau)q^n\big|_{k,\tau}T_{M}\\
	&=\sum_{n \in \tJ {\overline\nu} {k}} \left(\sum_{d\,|\,\gcd(m,M)} \left(\tfrac{M}{d}\right)^{k/2} d^{1-k/2} \ginf {\overline\nu} {k} {\tfrac{nM}{d^2}}\right)q^n.\\
\end{align*}

Similar calculations allow us to expand $\finfgen \nu {2-k} (z, \tau)\big|_{2-k,z}T_{M}$ as a power series in either $p$ or $q$ depending on the relative sizes of $\im(z)$ and $\im(\tau).$ Here we use the fact that $S_{2-k}(\Gamma,\nu)$ is automatically trivial since $k\geq 2$.
By comparing expansions, we find that
\[
\finfgen \nu {2-k} (z, \tau)\big|_{k,\tau}T_{M}= \finfgen \nu {2-k} (z, \tau)\big|_{2-k,z}T_{M}.
 \]

\end{proof}
\section*{Declarations}
\subsection*{Data Availability}
Data sharing not applicable to this article as no datasets were generated or analysed during the current study.

\subsection*{Conflict of Interest Statement}
On behalf of all authors, the corresponding author states that there is no conflict of interest.

\bibliographystyle{amsplain}

\begin{thebibliography}{10}

\bibitem{Adams}
Daniel Adams, \emph{Spaces of weakly holomorphic modular forms in level 52},
  Thesis (MS), Brigham Young University, 2017.

\bibitem{Ahlgren}
Scott Ahlgren, \emph{The theta-operator and the divisors of modular forms on genus zero
   subgroups}, Math. Res. Lett. \textbf{10} (2003), no.~5--6, 787--798. \MR{2024734}

\bibitem{Ahlgren-Kim}
Scott Ahlgren and Byungchan Kim, \emph{Mock modular grids and {H}ecke relations
  for mock modular forms}, Forum Math. \textbf{26} (2014), no.~4, 1261--1287.
  \MR{3228930}

\bibitem{Asai-Kaneko-Ninomiya}
Tetsuya Asai, Masanobu Kaneko, and Hirohito Ninomiya, \emph{Zeros of certain
  modular functions and an application}, Comment. Math. Univ. St. Paul.
  \textbf{46} (1997), no.~1, 93--101. \MR{1448475}

\bibitem{Atkinson}
James~R. Atkinson, \emph{Divisors of modular forms on $\Gamma_0(4)$}, J. Number Theory
  \textbf{112} (2005), no.~1, 189--204. \MR{2131143}

\bibitem{Borcherds1}
Richard~E. Borcherds, \emph{Automorphic forms on {${\rm O}_{s+2,2}({\bf R})$}
  and infinite products}, Invent. Math. \textbf{120} (1995), no.~1, 161--213.
  \MR{1323986}

\bibitem{Borcherds2}
\bysame, \emph{Automorphic forms with singularities on {G}rassmannians},
  Invent. Math. \textbf{132} (1998), no.~3, 491--562. \MR{1625724}

\bibitem{Borcherds}
\bysame, \emph{The {G}ross-{K}ohnen-{Z}agier theorem in higher dimensions},
  Duke Math. J. \textbf{97} (1999), no.~2, 219--233. \MR{1682249}

\bibitem{Borel}
Armand Borel, \emph{Commensurability classes and volumes of hyperbolic
  {$3$}-manifolds}, Ann. Scuola Norm. Sup. Pisa Cl. Sci. (4) \textbf{8} (1981),
  no.~1, 1--33. \MR{616899}

\bibitem{Bringmann-Jenkins-Kane}
Kathrin Bringmann, Paul Jenkins, and Ben Kane, \emph{Differential operators on
  polar harmonic {M}aass forms and elliptic duality}, Q. J. Math. \textbf{70}
  (2019), no.~4, 1181--1207. \MR{4045097}

\bibitem{Bringmann-Kane-Rhoades}
Kathrin Bringmann, Ben Kane, and Robert~C. Rhoades, \emph{Duality and
  differential operators for harmonic {M}aass forms}, From {F}ourier analysis
  and number theory to {R}adon transforms and geometry, Dev. Math., vol.~28,
  Springer, New York, 2013, pp.~85--106.

\bibitem{Bringmann-Ono}
Kathrin Bringmann and Ken Ono, \emph{Arithmetic properties of coefficients of
  half-integral weight {M}aass-{P}oincar\'e series}, Math. Ann. \textbf{337}
  (2007), no.~3, 591--612. \MR{2274544}

\bibitem{Bruinier2002}
Jan~H. Bruinier, \emph{Borcherds products on {O}(2, {$l$}) and {C}hern classes
  of {H}eegner divisors}, Lecture Notes in Mathematics, vol. 1780,
  Springer-Verlag, Berlin, 2002. \MR{1903920}

\bibitem{Bruinier-Funke}
Jan~Hendrik Bruinier and Jens Funke, \emph{On two geometric theta lifts}, Duke
  Math. J. \textbf{125} (2004), no.~1, 45--90.

\bibitem{Cho-Choi-Kim}
Bumkyu Cho, SoYoung Choi, and Chang~Heon Kim, \emph{Harmonic weak
  {M}aass-modular grids in higher level cases}, Acta Arith. \textbf{160}
  (2013), no.~2, 129--141. \MR{3105331}

\bibitem{Cho-Choie}
Bumkyu Cho and Youngju Choie, \emph{Zagier duality for harmonic weak {M}aass
  forms of integral weight}, Proc. Amer. Math. Soc. \textbf{139} (2011), no.~3,
  787--797. \MR{2745632}

\bibitem{Choi-Lim}
Dohoon Choi and Subong Lim, \emph{Structures for pairs of mock modular forms
  with the {Z}agier duality}, Trans. Amer. Math. Soc. \textbf{367} (2015),
  no.~8, 5831--5861. \MR{3347190}

\bibitem{Choi-Kim}
SoYoung Choi and Chang~Heon Kim, \emph{Basis for the space of weakly
  holomorphic modular forms in higher level cases}, J. Number Theory
  \textbf{133} (2013), no.~4, 1300--1311. \MR{3004001}

\bibitem{Pribitkin2}
Wladimir de~Azevedo~Pribitkin, \emph{A generalization of {K}nopp's observation
  on {R}amanujan's tau-function}, Ramanujan J. \textbf{41} (2016), no.~1-3,
  519--542. \MR{3574644}

\bibitem{DS}
Fred Diamond and Jerry Shurman, \emph{A first course in modular forms}, Springer-Verlag,
  New York, 2005, Graduate Texts in Mathematics, Vol. 228.  \MR{2112196}

\bibitem{Duke-Jenkins}
W.~Duke and Paul Jenkins, \emph{On the zeros and coefficients of certain weakly
  holomorphic modular forms}, Pure Appl. Math. Q. \textbf{4} (2008), no.~4,
  Special Issue: In honor of Jean-Pierre Serre. Part 1, 1327--1340.

\bibitem{Duke-Jenkins2}
William Duke and Paul Jenkins, \emph{Integral traces of singular values of weak
  {M}aass forms}, Algebra Number Theory \textbf{2} (2008), no.~5, 573--593.

\bibitem{Eichler1}
Martin Eichler, \emph{The basis problem for modular forms and the traces of the
  {H}ecke operators},  (1973), 75--151. Lecture Notes in Math., Vol. 320.
  \MR{0485698}

\bibitem{Eichler2}
\bysame, \emph{Correction to: ``{T}he basis problem for modular forms and the
  traces of the {H}ecke operators'' ({M}odular functions of one variable, {I}
  ({P}roc. {I}nternat. {S}ummer {S}chool, {U}niv. {A}ntwerp, 1972), pp.
  75--151, {L}ecture {N}otes in {M}ath., {V}ol. 320, {S}pringer, {B}erlin,
  1973)},  (1975), 145--147. Lecture Notes in Math., Vol. 476. \MR{0485699}

\bibitem{El-Guindy}
Ahmad El-Guindy, \emph{Fourier expansions with modular form coefficients}, Int.
  J. Number Theory \textbf{5} (2009), no.~8, 1433--1446. \MR{2582984}

\bibitem{Faber1}
Georg Faber, \emph{\"{U}ber polynomische {E}ntwicklungen}, Math. Ann.
  \textbf{57} (1903), no.~1, 389--408.

\bibitem{Faber2}
\bysame, \emph{\"{U}ber polynomische {E}ntwicklungen {II}}, Math. Ann.
  \textbf{64} (1907), no.~1, 116--135. \MR{1511426}

\bibitem{Folsom-Ono}
Amanda Folsom and Ken Ono, \emph{Duality involving the mock theta function
  {$f(q)$}}, J. Lond. Math. Soc. (2) \textbf{77} (2008), no.~2, 320--334.
  \MR{2400394}

\bibitem{Garthwaite-Jenkins}
Sharon~Anne Garthwaite and Paul Jenkins, \emph{Zeros of weakly holomorphic
  modular forms of levels 2 and 3}, Math. Res. Lett. \textbf{20} (2013), no.~4,
  657--674.

\bibitem{Green-Jenkins}
Nathan Green and Paul Jenkins, \emph{Integral traces of weak {M}aass forms of
  genus zero odd prime level}, Ramanujan J. \textbf{42} (2017), no.~2,
  453--478. \MR{3596943}

\bibitem{Guerzhoy}
Pavel Guerzhoy, \emph{On weak harmonic {M}aass-modular grids of even integral
  weights}, Math. Res. Lett. \textbf{16} (2009), no.~1, 59--65. \MR{2480561}

\bibitem{Haddock-Jenkins}
Andrew Haddock and Paul Jenkins, \emph{Zeros of weakly holomorphic modular
  forms of level 4}, Int. J. Number Theory \textbf{10} (2014), no.~2, 455--470.
  \MR{3189989}

\bibitem{Iba-Jenkins-Warnick}
Victoria Iba, Paul Jenkins, and Merrill Warnick, \emph{Divisibility properties
  of coefficients of modular functions in genus zero levels}, Integers
  \textbf{19} (2019), Paper No. A7, 17.

\bibitem{Iwaniec}
Henryk Iwaniec, \emph{Topics in classical automorphic forms}, Graduate Studies
  in Mathematics, vol.~17, American Mathematical Society, Providence, RI, 1997.
  \MR{1474964}

\bibitem{Jenkins-Molnar}
Paul Jenkins and Grant Molnar, \emph{Zagier duality for level {$p$} weakly
  holomorphic modular forms}, Ramanujan J. \textbf{50} (2019), no.~1, 93--109.
  \MR{4008099}

\bibitem{Jenkins-Thornton1}
Paul Jenkins and DJ~Thornton, \emph{Congruences for coefficients of modular
  functions}, Ramanujan J. \textbf{38} (2015), no.~3, 619--628.

\bibitem{Jenkins-Thornton2}
\bysame, \emph{Weakly holomorphic modular forms in prime power levels of genus
  zero}, Integers \textbf{18} (2018), Paper No. A18, 12. \MR{3777546}

\bibitem{JKK}
Daeyeol Jeon, Soon-Yi Kang, and Chang~Heon Kim, \emph{Hecke system of harmonic
  {M}aass functions and applications to modular curves of higher genera},
  preprint, https://arxiv.org/pdf/2001.08498.pdf.

\bibitem{Kazhdan-Patterson}
Daniel~A. Kazhdan and Samuel~J. Patterson, \emph{Metaplectic forms}, Inst.
  Hautes \'Etudes Sci. Publ. Math. (1984), no.~59, 35--142. \MR{743816}

\bibitem{Kim}
Chang~Heon Kim, \emph{Traces of singular values and {B}orcherds products},
  Bull. London Math. Soc. \textbf{38} (2006), no.~5, 730--740. \MR{2268356}

\bibitem{Kohnen}
Winfried Kohnen, \emph{Fourier coefficients of modular forms of half-integral weight},
  Math. Ann. \textbf{271} (1985), no.~2, 237--268. \MR{783554}

\bibitem{Petersson}
Hans Petersson, \emph{\"{U}ber automorphe {F}ormen mit {S}ungularit\"{a}ten im
  {D}iskontinuit\"{a}tsgebiet}, Math. Ann. \textbf{129} (1955), 370--390.
  \MR{0071459}

\bibitem{Rademacher}
Hans Rademacher, \emph{Topics in analytic number theory}, Springer-Verlag, New
  York-Heidelberg, 1973, Edited by E. Grosswald, J. Lehner and M. Newman, Die
  Grundlehren der mathematischen Wissenschaften, Band 169. \MR{0364103}

\bibitem{Rouse}
Jeremy Rouse, \emph{Zagier duality for the exponents of {B}orcherds products
  for {H}ilbert modular forms}, J. London Math. Soc. (2) \textbf{73} (2006),
  no.~2, 339--354. \MR{2225490}

\bibitem{Siegel}
Carl~Ludwig Siegel, \emph{Berechnung von {Z}etafunktionen an ganzzahligen
  {S}tellen}, Nachr. Akad. Wiss. G\"{o}ttingen Math.-Phys. Kl. II \textbf{1969}
  (1969), 87--102. \MR{0252349}

\bibitem{VanderWilt}
Christopher Vander~Wilt, \emph{Weakly holomorphic modular forms in level 64},
  Thesis (MS), Brigham Young University, 2017.

\bibitem{Ye}
Dongxi Ye, \emph{On the generating function of a canonical basis for $M^{!, \infty}_0(\Gamma)$},
  Results Math. \textbf{74} (2019), no.~2, Paper No. 72, 11.

\bibitem{ZagierTSM}
Don Zagier, \emph{Traces of singular moduli}, Motives, polylogarithms and
  {H}odge theory, {P}art {I} ({I}rvine, {CA}, 1998), Int. Press Lect. Ser.,
  vol.~3, Int. Press, Somerville, MA, 2002, pp.~211--244. \MR{1977587}

\bibitem{Zhang1}
Yichao Zhang, \emph{Zagier duality and integrality of {F}ourier coefficients
  for weakly holomorphic modular forms}, J. Number Theory \textbf{155} (2015),
  139--162. \MR{3349442}

\end{thebibliography}

\end{document}